\newtheorem{theorem}{Theorem}[section]
\newtheorem{corollary}[theorem]{Corollary}
\newtheorem{proposition}[theorem]{Proposition}
\newtheorem{lemma}[theorem]{Lemma}
\theoremstyle{definition}    
\newtheorem{definition}[theorem]{Definition}
\theoremstyle{remark}
\newtheorem{remark}[theorem]{Remark}
\newcommand{\pair}[2]{\langle #1, #2 \rangle}
\newcommand{\ignore}[1]{}
\newcommand{\matr}[4]{\left(\begin{array}{cc}#1&#2\\#3&#4\end{array}\right)}
\newcommand{\ol}[1]{\overline{#1}}
\newcommand{\ul}[1]{\underline{#1}}
\newcommand{\sul}[1]{\underline{#1\mkern-4mu}\mkern4mu } 
\renewcommand{\sf}[1]{\mathsf{#1}}
\newcommand{\wh}[1]{\widehat{#1}}
\newcommand{\scr}[1]{\mathscr{#1}}
\newcommand{\mf}[1]{\mathfrak{#1}}
\newcommand{\tn}[1]{\textnormal{#1}}
\renewcommand{\i}{{\mathrm{i}}}
\renewcommand{\c}{{\mathrm{c}}}
\def\Dirac{\ensuremath{\slashed{\D}}}
\def\DiracS{\ensuremath{\Dirac{}^\Sigma}}
\def\DiracSP{\ensuremath{\Dirac{}^{\Sigma,+}}}
\def\DiracSM{\ensuremath{\Dirac{}^{\Sigma,-}}}
\def\DiracC{\ensuremath{\Dirac{}^C}}
\def\d{\ensuremath{\mathrm{d}}}
\def\Ad{\ensuremath{\textnormal{Ad}}}
\def\ad{\ensuremath{\textnormal{ad}}}
\def\g{\ensuremath{\mathfrak{g}}}
\def\t{\ensuremath{\mathfrak{t}}}
\def\n{\ensuremath{\mathfrak{n}}}
\def\hvee{\ensuremath{\textnormal{h}^\vee}}
\def\A{\ensuremath{\mathcal{A}}}
\def\B{\ensuremath{\mathcal{B}}}
\def\C{\ensuremath{\mathcal{C}}}
\def\D{\ensuremath{\mathcal{D}}}
\def\E{\ensuremath{\mathcal{E}}}
\def\G{\ensuremath{\mathcal{G}}}
\def\J{\ensuremath{\mathcal{J}}}
\def\M{\ensuremath{\mathcal{M}}}
\def\R{\ensuremath{\mathcal{R}}}
\def\S{\ensuremath{\mathcal{S}}}
\def\U{\ensuremath{\mathcal{U}}}
\def\V{\ensuremath{\mathcal{V}}}
\def\bC{\ensuremath{\mathbb{C}}}
\def\bR{\ensuremath{\mathbb{R}}}
\def\bZ{\ensuremath{\mathbb{Z}}}
\def\bA{\ensuremath{\mathbb{A}}}
\def\bD{\ensuremath{\mathbb{D}}}
\def\bE{\ensuremath{\mathbb{E}}}
\def\bQ{\ensuremath{\mathbb{Q}}}
\def\End{\ensuremath{\textnormal{End}}}
\def\Hom{\ensuremath{\textnormal{Hom}}}
\def\ker{\ensuremath{\textnormal{ker}}}
\def\id{\ensuremath{\textnormal{id}}}
\def\Tr{\ensuremath{\textnormal{Tr}}}
\def\dim{\ensuremath{\textnormal{dim}}}
\def\dom{\ensuremath{\textnormal{dom}}}
\def\pt{\ensuremath{\textnormal{pt}}}
\def\Cl{\ensuremath{\textnormal{Cl}}}
\def\index{\ensuremath{\textnormal{index}}}
\def\sK{\ensuremath{\textnormal{K}}}
\def\aff{\ensuremath{\textnormal{aff}}}
\def\Ch{\ensuremath{\textnormal{Ch}}}
\def\Ahat{\ensuremath{\widehat{\textnormal{A}}}}
\def\hol{\ensuremath{\tn{hol}}}
\def\hotimes{\ensuremath{\wh{\otimes}}}
\def\ft{\ensuremath{[\![t]\!]}}
\def\reg{\ensuremath{\tn{reg}}}
\newcommand*{\fancy}[1]{{\color{white}\contour{black}{#1}}}
\def\boldSigma{\ensuremath{\fancy{$\Sigma$}}}
\newcommand{\bSigma}{\mathchoice
  {\mbox{\scaleobj{0.95}{\boldSigma}}}
  {\mbox{\scaleobj{0.95}{\boldSigma}}}
  {\mbox{\scaleobj{0.7}{\boldSigma}}}
  {\mbox{\scaleobj{0.5}{\boldSigma}}}
}
\def\DiracbS{\ensuremath{\Dirac{}^{\bSigma}}}
\def\DiracbSP{\ensuremath{\Dirac{}^{\bSigma,+}}}
\def\sg{\ensuremath{\mathrm{g}}}
\def\bM{\ensuremath{\mathbb{M}}}
\title[Index theory of the Atiyah-Bott classes]{Families of elliptic boundary problems and index theory of the Atiyah-Bott classes}
\author{Yiannis Loizides}
\address{Department of Mathematical Sciences, George Mason University}
\email{yloizide@gmu.edu}
\begin{document}
\sloppy

\begin{abstract}
We study a natural family of non-local elliptic boundary problems on a compact oriented surface $\Sigma$ parametrized by the moduli space $\M_\Sigma$ of flat $G$-connections with framing along $\partial \Sigma$. This family generalizes one introduced by Atiyah and Bott for closed surfaces. In earlier work we constructed an analytic index morphism out of a subring of the K-theory of $\M_\Sigma$. In this article we apply that morphism to the K-class of the Fredholm family and derive cohomological formulas. The main application is to calculate K-theory intersection pairings on symplectic quotients of $\M_\Sigma$; the latter are compact moduli spaces of flat connections on surfaces with boundary, where the boundary holonomies lie in prescribed conjugacy classes. The results provide a gauge theory analogue of the Teleman-Woodward index formula. 
\end{abstract}

\maketitle

\section{Introduction}
Let $G$ be a compact connected simply connected simple Lie group with Lie algebra $\g$, and fix an invariant inner product `$\cdot$' on $\g$. Let $\bSigma$ be a closed connected Riemann surface of genus $\rm{g}$. Atiyah and Bott \cite{AtiyahBottYangMills} observed that the infinite dimensional affine space of connections $\A_{\bSigma}$ on the trivial bundle $P=\bSigma\times G$ has a canonical symplectic form
\[ \omega_{\A_{\bSigma}}(a,b)=\int_\Sigma a\cdot b, \quad a,b \in T_A \A_{\bSigma}\simeq \Omega^1(\bSigma,\g).\]
The action of the gauge group $\G_{\bSigma}=\tn{Map}(\bSigma,G)$ is Hamiltonian with moment map $\mu(A)=\tn{curv}(A)$, the curvature of the connection $A$. The zero fiber $\A_{\bSigma}^\flat=\tn{curv}^{-1}(0)$ is the space of flat connections on $\bSigma$, and thus the symplectic quotient $\bM_{\bSigma}=\A_{\bSigma}^\flat/\G_{\bSigma}$, is the finite dimensional moduli space of flat connections on $\bSigma$. Atiyah and Bott studied the cohomology of $\bM_{\bSigma}$ using the Morse theory of the Yang-Mills functional on $\A_{\bSigma}$.

Atiyah and Bott introduced classes in the cohomology of $\bM_{\bSigma}$, obtained by applying the Chern character to certain canonical K-theory classes on $\bM_{\bSigma}$. These Atiyah-Bott K-theory classes $\sf{E}^NV$ are constructed from a pair $(V,N)$ consisting of a finite dimensional complex representation $V$ of $G$ and a closed submanifold $N \subset \Sigma$. Consider the $\G_{\bSigma}$-equivariant vector bundle $\V=\A_{\bSigma}\times V \times \bSigma \rightarrow \A_{\bSigma}\times \bSigma$. Then $\sf{E}^NV$ is obtained by restricting $\V$ to $\A_{\bSigma}^\flat\times N$, dividing by the diagonal $\G_{\bSigma}$ action, and then applying the wrong-way map $p_!$ in K-theory associated to the projection $p\colon \bM_{\bSigma}\times N \rightarrow \bM_{\bSigma}$ and a choice of square root of the canonical bundle of $\bSigma$.

By a well-known theorem of Narasimhan and Seshadri, $\bM_{\bSigma}$ can be identified with the moduli space of semistable holomorphic $G_\bC$-bundles on $\bSigma$. Going beyond semistable bundles, one can study the moduli stack $\mf{M}_{\bSigma}$ of all holomorphic $G_\bC$-bundles. There are analogues of the Atiyah-Bott classes here, studied by Teleman and Woodward in \cite{TelemanWoodward}, building on earlier work of Teleman. These authors defined an index morphism on the ring of even degree K-theory classes generated by the Atiyah-Bott classes and proved a remarkable formula for the index, giving a vast generalization of the Verlinde formulas. One recent application of this result was to the proof of the equivariant Verlinde formula for the moduli space of Higgs bundles \cite{EquivVerlinde, EquivVerlindePf1, EquivVerlindePf2}.

In this article we will be interested in the case of a Riemann surface $\Sigma$ with non-empty boundary. Besides their intrinsic interest, moduli spaces of flat connections on Riemann surfaces with boundary are natural spaces to consider when studying behavior of invariants under gluing of surfaces, for example in topological quantum field theory. By restricting the holonomy of the connection around each boundary component to lie in specified conjugacy classes of $G$, one recovers moduli spaces of holomorphic $G_\bC$-bundles with (quasi)parabolic structures at a finite set of points, by a generalization of the Narasimhan-Seshadri correspondence due to Seshadri and Mehta.

Let $\Sigma$ be a connected Riemann surface with $b\ge 1$ boundary components. For $N$ a closed submanifold of $\Sigma$ of dimension $0$ or $1$, the definition of the Atiyah-Bott class $\sf{E}^NV$ is essentially unchanged. However when $N=\Sigma$ the wrong-way map $p_!$ is not defined. From the analytic point of view that we take in this article, the issue is that the $\ol{\partial}$ operator on a Riemann surface with boundary is not a Fredholm operator. This is apparent already for the disk $\bD \subset \bC$, when $\ker(\ol{\partial})$ is infinite dimensional, consisting of all holomorphic functions on the disk. As is well-known, to obtain a Fredholm operator one can impose a boundary condition. For the disk $\bD$, the boundary condition we impose is that the restriction of a $V$-valued function to $\partial \bD$ has vanishing non-negative Fourier modes. In fact the same boundary condition (imposed along each component of the boundary) works on a general $\Sigma$. This is a simple instance of the Atiyah-Patodi-Singer boundary condition \cite{AtiyahPatodiSingerI}.

Unfortunately there is no continuous gauge-equivariant choice of boundary condition of the general type described above for the family $\bar{\partial}_A$, $A \in \A_\Sigma$. We are thus forced to consider the moduli space $\M_\Sigma$ of flat connections with \emph{framing} along the boundary $\partial \Sigma$. The construction of $\M_\Sigma$ is similar to the construction of $\bM_{\bSigma}$ except that one only divides by the normal subgroup $\G_{\Sigma,\partial \Sigma} \subset\G_\Sigma$ consisting of gauge transformations that equal the identity along the boundary: 
\[ \M_\Sigma=\A_\Sigma^\flat/\G_{\Sigma,\partial \Sigma}. \] 
The space $\M_\Sigma$ is a smooth infinite dimensional symplectic manifold, with a residual action of the quotient $\G_\Sigma/\G_{\Sigma,\partial \Sigma}$. Choosing boundary parametrizations, restriction to the boundary gives a smooth map to the loop group
\begin{equation}
\label{e:resbd}
\G_\Sigma/\G_{\Sigma,\partial \Sigma}\rightarrow L\sul{G}, \qquad \sul{G}=G^b.
\end{equation}  
When $G$ is simply connected, \eqref{e:resbd} is surjective and hence an isomorphism. The action of $L\sul{G}$ on $\M_\Sigma$ is Hamiltonian, with moment map
\[ \mu_{\M_\Sigma}\colon \M_\Sigma \rightarrow \Omega^1(\partial \Sigma,\g)\simeq \Omega^1(S^1,\ul{\g}) \]
induced by pullback of connections to $\partial \Sigma$. The framed moduli space $\M_\Sigma$ appeared in work of Donaldson \cite{donaldson1992boundary}, and was studied systematically in work of Meinrenken and Woodward \cite{MWVerlindeFactorization} who developed a theory of Hamiltonian loop group spaces encompassing $\M_\Sigma$ and various other examples. For $\Sigma=\bD$, $\M_\Sigma$ is a smooth version of the affine Grassmannian for $G$, cf. \cite{PressleySegal}. Symplectic quotients of $\M_\Sigma$ are finite-dimensional, compact, but possibly singular, moduli spaces of flat connections on $\Sigma$ with boundary holonomies constrained to lie in prescribed conjugacy classes. These moduli spaces have a rich topology and are of great interest in symplectic and algebraic geometry and in physics. In particular when all conjugacy classes are chosen to be trivial, the symplectic quotient recovers $\bM_{\bSigma}$, where $\bSigma$ is the closed surface obtained by capping off the boundary.

The action of the subgroup $\G_{\Sigma,\partial \Sigma}$ preserves the boundary condition described above. The family of Fredholm operators, $\ol{\partial}_A$ with fixed boundary condition and varying $A \in \A_\Sigma^\flat\subset \A_\Sigma$, thus descends to a continuous $\sul{G}$-equivariant Fredholm family over $\M_\Sigma$. The latter family represents a $\sul{G}$-equivariant K-theory class $\sf{E}^\Sigma V \in K_{\sul{G}}^0(\M_\Sigma)$. For closed manifolds such `tautological' families of Dirac operators parametrized by moduli spaces of connections (or vector potentials) were considered for example by Atiyah and Singer \cite{atiyah1984dirac}, as well as many other authors in both the mathematics and physics literature. The corresponding families for manifolds with boundary have received less attention, and in particular we are not aware of any prior work studying the above family. The general index theory of families of Atiyah-Patodi-Singer boundary problems has been developed however, including the well-known index theorems of Bismut-Cheeger \cite{BismutCheegerFamilies} and Melrose-Piazza \cite{melrose1997families}.

As mentioned above, $\M_\Sigma$ is an instance of a proper Hamiltonian loop group space in the sense of Meinrenken and Woodward \cite{MWVerlindeFactorization}. In earlier work \cite{LIndexFormula}, we constructed an `analytic index homomorphism' in K-theory for any proper Hamiltonian loop group space. Specializing to $\M_\Sigma$, and choosing a `level' $k\ge 0$, this is a homomorphism
\begin{equation} 
\label{e:indexintro}
\index_{\sul{T}}^k \colon K^{\ad}_{\sul{T}}(\M_\Sigma)\rightarrow R^{-\infty}(\sul{T}).
\end{equation}
Here $T\subset G$ is a maximal torus, $\sul{T}=T^b\subset G^b=\sul{G}$, and $R^{-\infty}(\sul{T})$ is the space of formal Fourier series
\begin{equation}
\label{e:formalseriesT}
\sum_{\ul{\lambda}\in \ul{\Lambda}} n_{\ul{\lambda}} e^{\ul{\lambda}}, \qquad n_{\ul{\lambda}}\in \bZ, \quad \ul{\Lambda}=\Hom(\sul{T},U(1)).
\end{equation}
The homomorphism \eqref{e:indexintro} is defined by coupling the K-theory class to a first-order elliptic operator on a finite-dimensional submanifold $X \subset \M_\Sigma$, and taking the equivariant $L^2$-index. Informally one can think of this as pushing forward K-classes along the map $X\rightarrow \pt$ using a canonical K-orientation that exists on $X$. The level $k$ is an additional freedom to twist by the $k$-th tensor power of the canonical prequantum line bundle $L$ on $\M_\Sigma$. On K-classes in the image of the restriction map from $\sul{G}$ to $\sul{T}$-equivariant K-theory, the index takes values in $\sul{W}$-antisymmetric formal series, i.e. series \eqref{e:formalseriesT} that change by a sign $(-1)^{l(\ul{w})}$ under the action of the Weyl group $\sul{W}$ on $\ul{\Lambda}$. Applying induction thus gives a similar map
\begin{equation}
\index_{\sul{G}}^k\colon K^\ad_{\sul{G}}(\M_\Sigma)\rightarrow R^{-\infty}(\sul{G}),
\end{equation}
where $R^{-\infty}(\sul{G})$ is the $R(\sul{G})$-module of formal series of irreducible $\sul{G}$-characters.

The manifold $X$ is a variant of the `extended moduli spaces' used by Jeffrey and Kirwan \cite{JeffreyKirwanModuli} in their work on the cohomology ring of the moduli space of flat connections, and a related space was used by Meinrenken in \cite{Meinrenken2005}. The manner in which $X$ sits inside $\M_\Sigma$ is similar to the way $\t=\tn{Lie}(T)$ sits inside $\Omega^1(S^1,\g)$ as constant connections $\xi \d \theta$, $\xi \in \t$ on the circle. Like $\t$ itself, $X$ is \emph{non-compact}, and this is a fundamental complication in the story. One consequence is that \eqref{e:indexintro} is only defined on a subring $K^{\ad}_{\sul{T}}(\M_\Sigma)\subset K_{\sul{T}}(\M_\Sigma)$ of the even topological K-theory of $\M_\Sigma$ consisting of K-classes represented by Fredholm complexes satisfying an equivariant bounded geometry condition. For these admissible K-classes, the $L^2$ kernel and cokernel of the resulting operator on $X$ are infinite dimensional but with finite $\sul{T}$-multiplicities.

The index homomorphism is a tool for computing K-theory intersection pairings on symplectic quotients of $\M_\Sigma$. For symplectic geometers, this is the K-theory analogue of the more familiar fact that twisted Duistermaat-Heckman distributions encode cohomological intersection pairings on symplectic quotients. To explain the relationship more precisely, let $\sf{E} \in K^{\ad}_{\sul{G}}(\M_\Sigma)$. Let $\mf{A}\subset \t_+$ be a fundamental alcove. Using the basic inner product on $\g$, we identify $\t\simeq \t^*$, and hence the real weight lattice $\Lambda\simeq \Hom(T,U(1))$ is identified with a lattice in $\t$ containing the sub-lattice $\Pi=\ker(\exp_T)\simeq \Hom(U(1),T)$. Let $\ul{a}=(a_1,...,a_b) \in \ul{\mf{A}}=\mf{A}^{b}$ be a regular value of $\mu_{\M_\Sigma}$. Then the symplectic quotient 
\[ \M_{\Sigma,\ul{a}}=\mu_{\M_\Sigma}^{-1}(\ul{a})/(L\sul{G})_{\ul{a}} \] 
is a finite dimensional compact symplectic orbifold. Assume $\ul{a}\in \ul{\Lambda} \otimes \bQ$ and $k \in \bZ$ is such that $k\ul{a}\in \ul{\Lambda}$. If, in addition, the stabilizer  $(L\sul{G})_{\ul{a}}\subset \sul{G}$ (this occurs if and only if each $a_j$ is not in the far wall of the alcove $\mf{A}$), then $\sf{E}\otimes L^k\otimes \bC_{-k\ul{a}}$ is $(L\sul{G})_{\ul{a}}$-equivariant and hence descends to a K-class
\[ \sf{E}_{k\ul{a}} \in K^0(\M_{\Sigma,\ul{a}}). \]
Using any compatible almost complex structure on $\M_{\Sigma,\ul{a}}$, the index
\[ \index(\sf{E}_{k\ul{a}}) \in \bZ \]
is defined. Let $V_{k\ul{a}}$ be the irreducible $\sul{G}$ representation with highest weight $k\ul{a}$. The non-abelian localization theorem proved in \cite{LIndexFormula} implies (see also \cite{WittenNonAbelian}),
\begin{equation} 
\label{e:indexreduced}
\index(\sf{E}_{k\ul{a}})=\tn{multiplicity}\big(V_{k\ul{a}},\, \index^k_{\sul{G}}(\sf{E})\big), \quad \text{ for } k\gg 0.
\end{equation}
On the other hand, by the index formula for orbifolds, the left hand side of \eqref{e:indexreduced} is a quasi-polynomial function of $k$, hence is completely determined by its values for $k\gg 0$. In this sense the index homomorphism \eqref{e:indexintro} determines K-theory intersection pairings on $\M_{\Sigma,\ul{a}}$. In case $\sf{E}=1$, the $[Q,R]=0$ theorem for Hamiltonian loop group spaces implies that \eqref{e:indexreduced} holds already for $k\ge 1$; a result closely related to the Verlinde formulas, cf. \cite{AMWVerlinde} for details.

Further intuition for the index homomorphism comes from earlier work \cite{LoizidesGeomKHom, LMSspinor, LSQuantLG, LSWittenDef} where we studied the special case $\sf{E}=1$. In that case we proved that $\index^k_{\sul{T}}(1)$ is the Weyl-Kac numerator of a projective positive energy representation of $L\sul{G}$ that deserves to be thought of as the geometric quantization of the infinite dimensional symplectic manifold $\M_\Sigma$ (there are a few approaches to defining the latter, cf. \cite{AMWVerlinde, MeinrenkenKHomology, LSQuantLG}, all of which agree).

The contents of the article are as follows. Section 2 recalls the construction of the infinite dimensional symplectic manifold $\M_\Sigma$ associated to a compact connected oriented surface $\Sigma$ with genus $\rm{g}$ and $b\ge 1$ boundary components. Section 3 gives a careful discussion of the Atiyah-Bott classes $\sf{E}^NV$, with particular focus on the most non-trivial case $N=\Sigma$. Section 4 describes the index homomorphism \eqref{e:indexintro} and establishes that the Fredholm complexes representing the Atiyah-Bott classes satisfy the admissibility hypothesis. The main results on $\sf{E}^\Sigma V$ from Sections 3 and 4 are summarized in the following theorem.
\begin{theorem}
The family of operators $\ol{\partial}_A$, $A \in \A_\Sigma^\flat$ with the boundary condition described above, descends to a continuous $\sul{G}$-equivariant family of Fredholm operators on $\M_\Sigma=\A_\Sigma^\flat/\G_{\Sigma,\partial \Sigma}$ representing an even admissible K-class $\sf{E}^\Sigma V \in K_{\sul{G}}^\ad(\M_\Sigma)$. The image of $\sf{E}^\Sigma V$ in $K_{\sul{T}}^\ad(\M_\Sigma)$ transforms under the action of an element $(\ul{w},\ul{\eta})\in \sul{W}\ltimes \ul{\Pi}=\sul{W}_{\tn{af}}$ of the affine Weyl group of $\sul{G}$ as
\begin{equation} 
\label{e:indexchangeaffweyl}
(\ul{w},\ul{\eta})^*\sf{E}^\Sigma V=\sf{E}^\Sigma V-(2\pi \i)^{-1}\partial_{\ul{\eta}}\ul{\Tr}_V,
\end{equation}
where $\Tr_V$ is the character of $V$, $\ul{\Tr}_V(\sul{g})=\Sigma_i \Tr_V(g_i)$ for $\sul{g}=(g_1,...,g_b)\in \sul{G}$, and $\partial_{\ul{\eta}}\ul{\Tr}_V$ is the derivative.
\end{theorem}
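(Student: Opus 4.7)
The plan is to verify the three assertions of the first sentence---continuity, $\sul G$-equivariance, and admissibility of the family---and then derive the affine Weyl transformation formula by tracking how the APS boundary condition changes when pulled back by a lattice element.

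I would first check that $A \mapsto \ol\partial_A$ is continuous as a family of bounded operators between suitable Sobolev spaces on $\Sigma$: the connection enters only through a zeroth-order multiplication term, and the APS boundary condition (vanishing non-negative Fourier modes of $f|_{\partial\Sigma}$ on each boundary circle) is independent of $A$, so the domain of the boundary problem is fixed and Fredholmness at each $A$ is the standard APS theory. Gauge transformations in $\G_{\Sigma,\partial\Sigma}$ restrict to the identity on $\partial\Sigma$, hence commute with the APS projection and descend the family to $\M_\Sigma$. The residual $\sul G$-action by boundary framings acts pointwise on $V$ and therefore commutes with the circle rotation, preserving the Fourier splitting; this gives $\sul G$-equivariance. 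For admissibility, I would restrict to the finite-dimensional slice $X \subset \M_\Sigma$ used in the definition of \eqref{e:indexintro} and verify the equivariant bounded geometry condition of \cite{LIndexFormula}: the coefficients of $\ol\partial_A$ and of an APS parametrix are uniformly bounded in an $\sul T$-equivariant fashion along $X$, with bounds controlled by the moment map valued in $\ul\t$.

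For the transformation formula, the $\sul W$ part is free because $\sf E^\Sigma V$ is $\sul W$-invariant (as an image from $K^\ad_{\sul G}$), so the substance reduces to the case $(1,\ul\eta)$. A lattice element $\ul\eta\in\ul\Pi$ acts on $\M_\Sigma$ via the loop group action, concretely by the loop $\theta\mapsto \exp(\theta\ul\eta)$ in $L\sul T$ on each boundary component. Pulling back the Fredholm family by this action conjugates $\ol\partial_A$ by multiplication by $\exp(\theta\ul\eta)$ and transforms the APS projection into its conjugate. On the weight-$\mu$ subspace of $V$ at boundary $j$, multiplication by $\exp(\theta\eta_j)$ equals $e^{2\pi\i\theta\langle\mu,\eta_j\rangle}$, which shifts Fourier modes by $\langle\mu,\eta_j\rangle$. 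Hence on $V_\mu$ at boundary $j$, the pulled-back boundary condition requires vanishing of modes $n\ge -\langle\mu,\eta_j\rangle$ instead of $n\ge 0$---a finite-rank modification of the original projection.

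The resulting K-theory difference is computed by the standard variation formula under a finite-rank perturbation of the boundary projection. On $V_\mu$ at boundary $j$, shifting the cut-off by $\langle\mu,\eta_j\rangle$ changes the index by $-\langle\mu,\eta_j\rangle$; the corresponding virtual kernel/cokernel piece carries $\sul T$-character $e^\mu$ with multiplicity $\dim V_\mu$. Since the perturbation depends only on $\ul\eta$ and on the fixed representation $V$ (not on $A$), it assembles globally into a trivial virtual $\sul T$-equivariant bundle over $\M_\Sigma$ with character
\[ -\sum_{j=1}^b \sum_\mu \dim V_\mu\,\langle\mu,\eta_j\rangle\, e^\mu. \]
Using $\partial_\eta e^\mu = 2\pi\i\,\langle\mu,\eta\rangle\,e^\mu$ on $\sul T$, the right-hand side equals $-(2\pi\i)^{-1}\partial_{\ul\eta}\ul{\Tr}_V$, yielding \eqref{e:indexchangeaffweyl}. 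The main obstacle I anticipate is the careful bookkeeping of the finite-rank spectral flow: one must verify that the APS families-index changes in exactly the signed fashion used above, and check that the local perturbation class assembles globally into the claimed element of $R(\sul T)\hookrightarrow K^\ad_{\sul T}(\M_\Sigma)$ independently of $A$---this is essentially formal but requires uniformity in the slice $X$ that was also needed for admissibility.
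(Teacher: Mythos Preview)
Your approach to the transformation formula is essentially the paper's: both arguments observe that $\sul{W}$-invariance is automatic since the class descends from $K_{\sul{G}}$, then for $\ul{\eta}\in\ul{\Pi}$ compare the two APS boundary conditions $B_{<0}(\partial)$ and $B_{<0}(\partial_{\ul{\eta}})$ via the change-of-boundary-condition formula, reducing to a finite-dimensional eigenvalue count on each weight space of $V$ at each boundary component. The paper packages this through the projections $P_i$ onto $B_{\ge 0}(\partial)$, $B_{\ge 0}(\partial_{\ul{\eta}})$ and their sum, but the content is your spectral-flow computation.

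Your treatment of admissibility is too vague and misses the actual mechanism. The admissibility conditions of \cite{LIndexFormula} are not about uniform boundedness of a parametrix or polynomial control by the moment map; what must be checked is that an $N(\sul{T})\ltimes\ul{\Pi}$-invariant Hermitian connection $\nabla^\E$ on the Hilbert bundle $\E|_X$ exists which preserves the domain of the boundary problem, and that the covariant derivative $\nabla^\E D$ is a \emph{bounded} section of $T^*X\otimes\scr{B}(\E)$. The subtlety is that the family $D_{q(\bullet)}$ is \emph{not} $\ul{\Pi}$-equivariant (indeed, the transformation formula you just proved shows it is not), so boundedness along the noncompact $X$ cannot be reduced to a compact fundamental domain by periodicity of $D$ alone. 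The paper resolves this by constructing $\nabla^\E$ from Coulomb gauge trivializations patched $\ul{\Pi}$-periodically; the covariant derivative of $D$ in such a trivialization is just the zeroth-order term $(\c^\Sigma\boxtimes\pi_V)(B)$ coming from varying the connection $A$, which involves no boundary condition at all and is bounded by Sobolev embedding on each compact chart. Your proposal should identify this: the boundary condition is fixed in each Coulomb chart, so varying $A$ only varies the interior potential, and that variation is uniformly bounded by the $\ul{\Pi}$-invariance of the chart construction.
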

Note that $(2\pi \i)^{-1}\partial_{\ul{\eta}}\ul{\Tr}_V\in R(\sul{T})\simeq R(\sul{T})\cdot 1 \subset K^0_{\sul{T}}(\M_\Sigma)$. The non-trivial transformation \eqref{e:indexchangeaffweyl} is calculated using a general formula for the change in the Fredholm index of an elliptic boundary problem under a change of boundary conditions.

In Section 5 we use the families index theorem to construct an equivariant differential form representing the equivariant Chern character of $\sf{E}^\Sigma V\upharpoonright X$. The resulting differential form possesses good analytic properties, and can be (equivariantly) integrated over the non-compact manifold $X$, leading to a Kirillov-type formula for the index. Following Teleman and Woodward, Section 6 studies the generating series
\[ \sf{E}_t=\exp(t\sf{E}^\Sigma V) \in \bC K^\ad_{\sul{G}}(\M_\Sigma)\ft \]
where $t$ is a formal variable. The exponential exchanges the additive transformation \eqref{e:indexchangeaffweyl} for a multiplicative one, and allows one to partially sum the Kirillov formula using the Poisson summation formula. Following this line of reasoning lead, in \cite{LIndexFormula}, to a very general Atiyah-Bott-Segal-Singer fixed-point formula for the index of suitable K-classes on Hamiltonian loop group spaces. In Section 6 we apply this general formula to $\sf{E}^\Sigma V$ and the other Atiyah-Bott classes, and compute the fixed-point contributions. The final result for the class $\sf{E}^\Sigma V$ is the following. Note that in the statement, the Lie algebra $\g$ is identified with $\g^*$ using the basic inner product, $G$ is identified with the diagonal in $\sul{G}=G^b$, $\index^k_{\sul{G}}(\exp(t\sf{E}^\Sigma V))$ is viewed as a formal series of invariant distributions on $\sul{G}$ and we take its pairing with a class function.
\begin{theorem}
\label{t:indexABintroduction}
Let $f \in R(\sul{G})$. Then
\[ \pair{\index^k_{\sul{G}}(\exp(t\sf{E}^\Sigma V))}{f}=\sum_{g \in T_\ell^\reg/W} \Big(\frac{(-1)^{|\mf{R}_+|}J(g_t)^2}{|T_\ell|\det(1-t\ell^{-1}H_V(g_t))}\Big)^{1-\sg}f(g_t) \in \bC\ft,\]
where $\ell=k+\hvee$ and $\hvee$ is the dual Coxeter number of $\g$, $T_\ell=\ell^{-1}\Lambda/\Pi\subset T$ (a finite subgroup), $J$ is the Weyl denominator, $g_t \in \J^\infty_g(T_\bC)$ is the unique $\infty$-jet of a curve (or $\bC\ft$-point) based at $g\in T_\bC$ satisfying $g_t \exp(t\ell^{-1}v(g_t))=g$, $v=\frac{1}{4\pi^2}\nabla \Tr_V$, and $H_V=\d v=\frac{1}{4\pi^2}\tn{Hessian}(\Tr_V)$.
\end{theorem}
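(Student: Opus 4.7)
Proof proposal. The plan is to apply the general Atiyah-Bott-Segal-Singer fixed-point formula for proper Hamiltonian loop group spaces from \cite{LIndexFormula} to the generating class $\sf{E}_t = \exp(t\sf{E}^\Sigma V)$. The starting point is Theorem 1.1 above: exponentiating \eqref{e:indexchangeaffweyl} turns the additive cocycle into a multiplicative one,
\[ (\ul{w},\ul{\eta})^*\sf{E}_t = \exp\bigl(-(2\pi\i)^{-1} t\,\partial_{\ul{\eta}}\ul{\Tr}_V\bigr)\cdot \sf{E}_t, \]
which is precisely the form of quasi-invariance that drives the Poisson summation step built into the general formula.

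Combining the Kirillov representative of $\Ch(\sf{E}^\Sigma V \upharpoonright X)$ constructed in Section 5 with the exponential produces an equivariant differential form on $X$ whose integration computes $\index^k_{\sul{T}}(\sf{E}_t)$. Inducing to $\sul{G}$, pairing with a class function $f \in R(\sul{G})$, and applying Poisson summation along the lattice directions $\ul{\Pi}$ localizes the integrand onto the critical set of a phase of the form $\xi\mapsto \pair{g^{-1}\d g}{\xi} - t\ell^{-1}\Tr_V(g\exp(\xi))$, parametrized by $g \in T$; here the level shift $\ell = k+\hvee$ arises from the half-density (metaplectic) correction built into the index homomorphism. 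The critical equation is exactly the implicit condition $g_t\exp(t\ell^{-1}v(g_t)) = g$ defining the $\infty$-jets of the theorem, thereby selecting the torsion set $g \in T_\ell$. Regularity of $g$ guarantees non-degenerate stationary phase, yielding the Jacobian factor $\det\bigl(1-t\ell^{-1}H_V(g_t)\bigr)^{-1}$.

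It remains to assemble the normal-bundle contribution at each $g \in T_\ell^\reg/W$. The Kirillov integrand on $X$ packages three pieces: the equivariant symplectic volume of $\M_\Sigma$, which by the Atiyah-Bott decomposition of $H^*(\Sigma)$ is controlled by the $2\sg$ handle one-cycles together with the boundary framing; the equivariant $\Ahat$-class of $X$, which supplies a Weyl-denominator factor; and the transgressed boundary term producing the normalization by $|T_\ell|$. Assembling these contributions as prescribed by \cite{LIndexFormula} produces a power of the basic ratio $J(g_t)^2/|T_\ell|$ dictated by the Euler characteristic of the closed surface obtained by capping $\Sigma$, giving the stated exponent $1-\sg$. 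The sign $(-1)^{|\mf{R}_+|}$ is the orientation of the product of positive coroots appearing in $J$, and $f(g_t)$ arises from evaluation of the class function on the diagonal embedding $G\hookrightarrow \sul{G}$ at the jet.

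The main obstacle is the genus bookkeeping in the normal-bundle factor. Although the general formula of \cite{LIndexFormula} furnishes a clean fixed-point decomposition, extracting the explicit $(1-\sg)$-th power requires careful identification of how the $T$-fixed contributions of the various tautological classes over $X$ assemble into a Verlinde-type product, and a verification that in the $t\to 0$ specialization one recovers the closed-surface Teleman-Woodward formula (with boundary holonomies set to the identity, the boundary pieces absorbed by capping, and $\sf{E}^\Sigma V$ collapsing to a trivial deformation). The delicate matching of the $2\sg$ handle-cycle contributions with powers of the Weyl denominator, together with the observation that the $\sf{E}^\Sigma V$-deformation enters only through the jet $g\mapsto g_t$ and the stationary phase Jacobian, is the combinatorial heart of the proof.
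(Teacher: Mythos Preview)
Your strategy is correct in outline and matches the paper's approach: one applies the general fixed-point theorem of \cite{LIndexFormula} to the generating series $\sf{E}_t$, using the multiplicative affine-Weyl cocycle obtained by exponentiating \eqref{e:indexchangeaffweyl}. However, what you label ``the main obstacle'' and then leave essentially unaddressed is in fact the entire computational content of the theorem. The proposal as written is not a proof but a table of contents.

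Concretely, there are three missing ingredients, and one of them contradicts your accounting of where the determinant factor comes from. First, you never identify the fixed-point locus: one must check that $M_\Sigma^{\sul{g}}$ is empty unless $g_1=\cdots=g_b$, and that in that case $X^T/\ul{\Pi}\simeq F=T^{2(\sg+b-1)}$ via holonomy coordinates. Second, the Weyl-denominator powers do \emph{not} come from the $\Ahat$-class ($\Ahat(F)=1$); they come from the Bott-Thom twist $\scr{B}$ (a factor $J^b$) and from the normal bundle of $F$, which is the trivial bundle with fiber $(\g/\t)^{2(\sg+b-1)}$ (a factor $J^{-2(\sg+b-1)}$). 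Third, and most importantly, the Jacobian $\det(1-t\ell^{-1}H_V)$ does \emph{not} arise only from the Poisson-summation stationary phase. That step contributes a power $-b$, while the general fixed-point formula still leaves you with an integral over $F$:
\[
\int_F \exp\Big(\tfrac12 c_1(\scr{L}) + t\,\Ch^{g_t}(\sf{E}^\Sigma V)\Big).
\]
Evaluating this requires computing $\Ch^{g_t}(\bm{E}V)$ over $F\times\Sigma$ explicitly via the weight decomposition of $V$, recognizing the result on $F$ as $\exp$ of a $2$-form with matrix $\ell\cdot\id-tH_V(g_t)$ in lattice coordinates, and integrating to obtain $|T_\ell|^{\sg+b-1}\det(1-t\ell^{-1}H_V)^{\sg+b-1}$. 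It is the combination of these two occurrences of the determinant (powers $-b$ and $\sg+b-1$) that yields the exponent $1-\sg$, and likewise for $|T_\ell|$ and $J^2$. Your appeal to ``the Euler characteristic of the closed surface'' is a mnemonic for the answer, not a derivation.
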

There is a more general formula of the same kind involving the other Atiyah-Bott classes $\sf{E}^NV$, and similar formulas with multiple parameters $t_1,...,t_k$ suitable for studying products $\sf{E}^\Sigma V_1 \cdots \sf{E}^\Sigma V_k \in K^\ad_{\sul{G}}(\M_\Sigma)$. As a corollary of Theorem \ref{t:indexABintroduction} and equation \eqref{e:indexreduced} we obtain formulas for K-theory intersection pairings on symplectic quotients. For the simplest example going beyond the case relevant to the Verlinde formulas ($\sf{E}=1$), we deduce that if $k\gg 0$ and $\ul{a}$ is as in \eqref{e:indexreduced} then
\[ \index(\sf{E}_{k\ul{a}})=\frac{\d}{\d t}\bigg|_{t=0}\sum_{g \in T_\ell^\reg/W} \Big(\frac{(-1)^{|\mf{R}_+|}J(g_t)^2}{|T_\ell|\det(1-t\ell^{-1}H_V(g_t))}\Big)^{1-\sg}\Tr_{V_{k\ul{a}}^*}(g_t),\]
and for small $k$ the left hand side is the value of the unique quasi-polynomial function of $k$ that agrees with the right hand side for $k\gg 0$. In the final section we briefly discuss (following \cite{TelemanWoodward}) another example of a generating series, motivated by its appearance in the equivariant Verlinde formula for Higgs bundles \cite{EquivVerlinde, EquivVerlindePf1, EquivVerlindePf2}. 

Taking $f=1$ in Theorem \ref{t:indexABintroduction}, one obtains an expression matching the index theorem of Teleman and Woodward on the moduli stack $\mf{M}_{\bSigma}$ of holomorphic $G_\bC$-bundles (with $\bSigma$ obtained by capping off the boundary of $\Sigma$), a remarkable generalization of the Verlinde formula (recovered by setting $t=0$) that was conjectured by Teleman in \cite{TelemanDefVerlinde} and proved in \cite{TelemanWoodward}. The latter theorem and Theorem \ref{t:indexABintroduction} apply to different contexts and do not admit a direct comparison. However both theorems have formulas for K-theory intersection pairings on symplectic quotients as a common consequence. The basic reason one expects some relationship between the two is that the homotopy type of the stack $\mf{M}_{\bSigma}$ is the homotopy quotient $\M_{\Sigma,\sul{G}}=E\sul{G}\times_{\sul{G}}\M_{\Sigma}$ (cf. \cite{donaldson1992boundary, WoodwardHeatFlow}, \cite[Appendix]{TelemanWoodward} for further discussion). In their proof, Teleman and Woodward introduce (\cite[Section 3]{TelemanWoodward}) a formal series with a twisted affine Weyl antisymmetry property, and it seems likely to us that this series matches our $\sul{T}$-equivariant analytic index. The proof of \eqref{e:indexchangeaffweyl} then suggests a very different perspective on the twisted equivariance property of the former. We also remark that recently an elegant algebraic-geometric approach to these K-theory intersection pairings was given by Trapeznikova \cite{trapeznikova2022}.

We describe an example where the index can be easily calculated directly without the tools developed in \cite{LIndexFormula}. In the discussion below $\g$ is identified with  $\g^*$ using the basic inner product. Let $\Sigma=\bD$ be the disk and let $\M_{\bD}$ denote the moduli space of flat $G$-connections on $\bD$ modulo gauge transformations equal to the identity along $\partial \bD=S^1$. Then $\M_{\bD}$ is an infinite dimensional K{\"a}hler manifold and carries a Hamiltonian action of the loop group $LG$, with moment map
\[ \mu_{\bD}\colon \M_\bD \rightarrow \Omega^1(S^1,\g) \]
induced by pullback of connections to $\partial \bD=S^1$. In this case $\mu_{\bD}$ is an embedding and identifies $\M_\bD$ with the $LG$ orbit of $0 \in \Omega^1(S^1,\g)$ under the gauge action. Since the stabilizer of $0$ is $G\subset LG$ (constant loops), one also has $\M_\bD\simeq LG/G$.

The manifold $X$ is homotopic to the discrete space
\[ \M_\bD \cap \t=W_{\tn{af}}\cdot 0=\Pi, \qquad \Pi=\ker(\exp_T)\subset \t,\]
where on the left hand side we identify $\M_\bD$ with its image in $\Omega^1(S^1,\g)$ and $\t$ with the constant connections $\xi \d \theta$, $\xi \in \t$. The intersection $\M_\bD \cap \t$ also coincides with the fixed point set of the $T\times S^1_{\tn{rot}}$ action that one has on $\M_\bD$, where $S^1_{\tn{rot}}$ rotates the disk. 

The prequantum line bundle may be described as
\[ L=\wh{LG}\times_{\wh{G}} \bC_{(0,-1)} \rightarrow LG/G\simeq \M_\bD,\]
where $\wh{LG}$ is the minimal central extension of the loop group. The spinor bundle $\scr{S}=L^{\hvee}$, and the correction $S$ of $\scr{S}$ to account for non-transversality of the intersection $\M_\bD \cap \t$ is $S=\scr{S}\wh{\otimes}\wedge \n^- \otimes \bC_\rho$ where $\n^-$ is the sum of the negative root spaces and $\rho$ is the half sum of the positive roots. $T$ acts on the fiber of $L$ over $\eta \in \M_\bD\cap \t=\Pi$ by the character $e^\eta$.

Since $X$ is homotopic to a discrete space, the $T$-equivariant index amounts to a sum over the points $\eta \in \Pi$ of the character for the $T$ action on $(S\otimes L^k)|_{\eta}$:
\[ \index^k_T(1)=J\sum_{\eta \in \Pi} e^{\ell \eta}=\sum_{g \in T_\ell^{\tn{reg}}/W} \frac{J(g)}{|T_\ell|}\sum_{w \in W}(-1)^{l(w)}\delta_{wg} \in \D'(T),\]
where $\ell=k+\hvee$, $T_\ell=\ell^{-1}\Lambda/\Pi\subset \t/\Pi=T$. We used the Poisson summation formula and the antisymmetry of $J$. The sum over $T_\ell^{\tn{reg}}/W$ may be taken over any set of representatives.

We next bring in a non-trivial K-class. Fix a finite dimensional representation $V$ of $G$. Over $\A_{\bD}^\flat$ there is a tautological family of first order elliptic operators, where to $A \in \A_{\bD}^\flat$ we assign the $\ol{\partial}$-operator on $\bD$ coupled to the trivial bundle $\bD \times V$ with the vector potential $A$. Imposing the boundary condition that the restriction of a section to $\partial \bD$ has vanishing non-negative Fourier modes, we obtain a family of Fredholm operators over $\A_{\bD}^\flat$ that descends to a $G$-equivariant family of Fredholm operators over $\M_\bD$ representing the K-class $\sf{E}^\bD V \in K_G(\M_\bD)$.

Let $\Tr_V=\sum_\lambda n_\lambda e^\lambda$ denote the restriction of the character to the maximal torus. The pullback of $\sf{E}^\bD V$ to a point $\eta \in \Pi=\M_\bD \cap \t$ is 
\[ \sf{E}^\bD V|_{\eta}=(2\pi \i)^{-1}\partial_\eta \Tr_V=\sum_\lambda n_\lambda \pair{\lambda}{\eta}e^\lambda \in R(T)=K^0_T(\pt).\] 
(One method to verify this is to note $\sf{E}^\bD V|_0=0$ and then use the change of boundary condition formula that we shall see in Section \ref{s:affineWeylaction}.) Therefore the index is
\[ \index_T^k(\sf{E}^\bD V)=J\sum_{\eta \in \Pi}\sum_\lambda n_\lambda \pair{\lambda}{\eta}e^{\ell \eta+\lambda} \in R^{-\infty}(T).\]
Considering instead the formal series $\exp(t\sf{E}^\bD V)\in \bC K_T(\M_\bD)\ft$ we obtain
\[ \index_T^k(\exp(t\sf{E}^{\bD}V))=J\sum_{\eta \in \Pi}e^{\ell \eta+t\sum_\lambda n_\lambda \pair{\lambda}{\eta}e^\lambda} \in \bC R^{-\infty}(T)\ft, \]
which can be simplified, after a formal change of variables, using the Poisson summation formula (see \cite[Section 4.4]{LIndexFormula} for details). The result is
\begin{equation}
\label{e:bdV}
\index_T^k(\exp(t\sf{E}^\bD V))=\sum_{g \in T_\ell^{\tn{reg}}/W}\frac{J(g_t)\sum_{w \in W}(-1)^{l(w)}\delta_{wg_t}}{|T_\ell|\det(1-t\ell^{-1}H_V(g_t))}
\end{equation} 
where $g_t \in \J^\infty_g(T_\bC)$ is the unique $\infty$-jet of a curve (or $\bC\ft$-point) based at $g\in T_\bC$ satisfying $g_t \exp(t\ell^{-1}v(g_t))=g$, $v=\frac{1}{4\pi^2}\nabla \Tr_V$, and $H_V=\d v=\frac{1}{4\pi^2}\tn{Hessian}(\Tr_V)$. On applying induction, equation \eqref{e:bdV} becomes the genus $\rm{g}=0$ case of Theorem \ref{t:indexABintroduction}.

We remark that the $T\times S^1_{\tn{rot}}$-equivariant K-theory of $\M_\bD$ was computed by Kostant and Kumar \cite{kostant1990t}. An algebraic description of both the $T\times S^1_{\tn{rot}}$ and $T$-equivariant K-theory groups of $\M_\bD$ are given in \cite{lam2010k}. For the $T$-equivariant K-theory the condition is as follows. Since $W_{\tn{af}}=W\ltimes \Pi$, the restriction of an element $\sf{E}\in K_T(\M_\bD)$ to $\M_\bD\cap \t=\Pi$ can be viewed as a function $\psi \colon W_{\tn{af}}\rightarrow R(T)$ constant on $W$ cosets. Then the $R(T)$-module $K_T(\M_\bD)$ is isomorphic to the space of such functions $\psi$ with the additional GKM-type property that for every $d>0$ and root $\alpha$, the function $\mf{d}^d_{\alpha^\vee}\psi$ takes values in $(1-e^\alpha)^d\cdot R(T)$, where $\mf{d}_{\alpha^\vee}$ is the finite difference operator $(\mf{d}_{\alpha^\vee}\psi)(w)=\psi(w)-\psi(\alpha^\vee\cdot w)$. As must be so $\sf{E}^\Sigma V$ satisfies the condition (and trivially for $d>1$ in the sense that $\mf{d}^d_{\alpha^\vee}\psi=0$). $\sf{E}^\Sigma V$ is rather different from the Schubert-type generators in \cite{lam2010k}. We conjecture that the admissible classes are those for which the GKM condition becomes trivial for some finite $d$.

\bigskip

\noindent \textbf{Acknowledgements.} I thank Michele Vergne for discussions and for introducing me to the work of physicists \cite{EquivVerlinde} on the equivariant Verlinde formula several years ago. I thank Eckhard Meinrenken, Nigel Higson, Reyer Sjamaar, Chris Woodward, Maxim Braverman and Daniel Halpern-Leistner for helpful discussions.

\bigskip

\noindent \textbf{Notation.} We often deal with vector spaces/bundles that are $\bZ_2$-graded and in this context $[a,b]$ denotes the graded commutator of linear operators $a,b$. Graded tensor products are denoted $\hotimes$.

Let $G$ be a compact connected simply connected simple Lie group with Lie algebra $\g$. Fix a maximal torus $T \subset G$ with Lie algebra $\t$, and a choice of positive chamber $\t_+\subset \t$. Let $\mf{A}\subset \t_+$ denote the fundamental alcove containing $0$. Let $W$ be the Weyl group, and let $\mf{R} \supset \mf{R}_+$ be the set of roots, positive roots respectively. The integer or co-root lattice is denoted $\Pi=\ker(\exp\colon \t \rightarrow T)\simeq\Hom(U(1),T)$, and the real weight lattice by $\Lambda=\Hom(\Pi,\bZ)\simeq\Hom(T,U(1))$. For $\lambda \in \Lambda$, the corresponding homomorphism $T\rightarrow U(1)$ is denoted $e^\lambda$, and if $\xi \in \t$, $u=\exp(\xi)$ then $e^\lambda(u)=u^\lambda=e^{2\pi \i\pair{\xi}{\lambda}}$. The lattice $\Pi$ in $\t$ determines a canonical normalization of Lebesgue measure, that we use to identify distributions with generalized functions. Usually $u,g$ denote elements of $T$ (or $T_\bC$), $\xi$ an element of $\t$ (or $\t_\bC$), $\eta$ an element of $\Pi$, and $\lambda$ an element of $\Lambda$. 

The representation ring of $T$ is $R(T)=\bZ[\Lambda]$, and $\bC R(T)=\bC\otimes R(T)$. The formal completion $R^{-\infty}(T)=\bZ^\Lambda$ is an $R(T)$-module. If $T$ acts on a Hilbert space $H$, the isotypical subspace of weight $\lambda \in \Lambda$ is denoted $H_{[\lambda]}$. If $a$ is a $T$-equivariant linear operator on $H$, then the induced operator on $H_{[\lambda]}$ is denoted $a_{[\lambda]}$.

For a simple Lie algebra, every invariant integral inner product is a positive integer multiple of the basic inner product `$\cdot$', normalized such that the length of the short co-roots is $\sqrt{2}$. For $\mf{su}(n)$ the basic inner product is
\[ (\xi_1,\xi_2)\mapsto \xi_1\cdot\xi_2=-\frac{1}{4\pi^2}\Tr_{\bC^n}(\xi_1\xi_2).\] 
We use the basic inner product to identity $\g\simeq \g^*$ throughout. In particular $\Lambda$ becomes a lattice in $\t$ and for $\ell \in \bZ_{>0}$, we can define a finite subgroup $T_\ell:=\ell^{-1}\Lambda/\Pi \subset \t/\Pi=T$.

Let $\hvee$ be the dual Coxeter number of $\g$. For example if $\g=\mf{su}(n)$ then $\hvee=n$. Beginning in Section 4 of the article, $k$ denotes an integer strictly greater than $-\hvee$, and $\ell=k+\hvee$.

Let $b \ge 1$ be an integer. Objects associated to the $b$-fold product $\sul{G}:=G^b$ will be denoted with an underline. For example, $\sul{T}=T^b\subset G^b$, $\ul{\Pi}=\Pi^b\subset \ul{\t}=\t^b$, and so on.

\section{Framed moduli spaces of flat connections}

\subsection{Loop groups}
For background on loop groups see \cite{PressleySegal}. Let $LG$ denote the space of maps $S^1=\bR/\bZ \rightarrow G$ of fixed Sobolev class $\varsigma>\frac{3}{2}$. This is a Banach Lie group (the group operation is pointwise multiplication of loops) with Lie algebra $L\g$ identified with $\g$-valued $0$-forms $\Omega^0_\varsigma(S^1,\g)$ of Sobolev class $\varsigma$.

The $U(1)$ central extensions of the loop group are classified by $\ell \in \bZ$, where the corresponding central extension $\wh{LG}^{(\ell)}$ has Lie algebra cocycle
\[ (\xi_1,\xi_2)\mapsto 2\pi \i \int_{S^1} \ell\d\xi_1\cdot \xi_2.\] 
The group $G$ sits inside $LG$ as the subgroup of constant loops. The central extension $\wh{LG}^{(\ell)}$ trivializes over $G$, hence $G$ can be regarded as a subgroup of $\wh{LG}^{(\ell)}$.

Let $L\g^*=\Omega^1_{\varsigma-1}(S^1,\g)$ denote the space of $\g$-valued $1$-forms (or connections) of Sobolev class $\varsigma-1$. Since $\varsigma-1>-\varsigma$, there is a non-degenerate pairing $L\g \times L\g^*\rightarrow \bR$ given in terms of the inner product $\cdot$ on $\g$ followed by integration over $S^1$. The loop group $LG$ acts smoothly and properly on $L\g^*$ by gauge transformations:
\begin{equation} 
\label{e:gauge}
a\mapsto g\cdot a=\Ad_g(a)-\d gg^{-1}, \qquad g \in LG, a \in L\g^*.
\end{equation}
Given $\xi \in L\g$ the corresponding vector field for the infinitesimal action on $L\g^*$ is $\xi_{L\g^*}(a)=\partial_u|_{u=0} \exp(-u\xi)\cdot a=\d_a \xi$ at $a \in L\g^*$, where $\d_a=\d+\ad_a$ is the covariant derivative defined by the connection $a$. The subgroup $\Omega G \subset LG$ consisting of loops beginning at $1 \in G$ acts on $L\g^*$ freely and properly with quotient $L\g^*/\Omega G \simeq G$, and the $LG$-action on $L\g^*$ descends to the action of $G$ on itself by conjugation. The holonomy map
\[ \hol \colon L\g^* \rightarrow G \]
is the quotient map for the $\Omega G$ action.

\subsection{Framed moduli spaces of flat connections.}\label{s:modsp}
Let $\Sigma$ be a compact connected oriented genus $\rm{g}$ surface with $\partial \Sigma \ne \emptyset$. We label the boundary components $1,...,b$ ($b$ connected components) and denote the $j$-th boundary component by $\partial_j \Sigma$. We orient the boundary such that if $v$ is a non-vanishing interior-pointing vector field along the boundary, then $\iota(v)\tn{Vol}_\Sigma|_{\partial \Sigma}$ is a volume form on the boundary (with this orientation a minus sign appears in Stokes' theorem). We also fix orientation-preserving parametrizations of the boundary components $\partial_j\Sigma\simeq S^1=\bR/\bZ$. Recall $\sul{G}=G^b$, which we view as the set of locally constant maps $\partial \Sigma \rightarrow G$. The loop group $L\sul{G}=LG^b$ is identified with the set of maps $\partial \Sigma \rightarrow G$ of Sobolev class $\varsigma$.

As $G$ is simply connected, any principal $G$-bundle over $\Sigma$ is topologically trivial. Let $\G_\Sigma$ be the gauge group, that is, the set of maps $\Sigma \rightarrow G$ of Sobolev class $\varsigma+\frac{1}{2}$. There is a continuous group homomorphism 
\[ \scr{R} \colon \G_\Sigma\rightarrow L\sul{G} \] 
given by restriction to the boundary. Since $G$ is simply connected, $\scr{R}$ is surjective, hence we obtain a short exact sequence
\[ 1 \rightarrow \G_{\Sigma,\partial \Sigma}\rightarrow \G_\Sigma \rightarrow L\sul{G} \rightarrow 1.\]
Let
\[ \A_\Sigma=\Omega^1_{\varsigma-\frac{1}{2}}(\Sigma,\g) \supset \A_\Sigma^\flat \]
be the space of connections, resp. the space of flat connections, on the principal $G$-bundle $\Sigma \times G$, of Sobolev class $\varsigma-\frac{1}{2}$. The affine space $\A_\Sigma$ carries the well-known Atiyah-Bott symplectic structure \cite{AtiyahBottYangMills}, and the action of $\G_{\Sigma,\partial \Sigma}$ is Hamiltonian for the moment map $A \mapsto \tn{curv}(A)$. 
\begin{definition}
The symplectic quotient
\[ \M_\Sigma=\A_\Sigma^\flat/\G_{\Sigma,\partial \Sigma} \]
is an infinite dimensional weakly symplectic Banach manifold, equipped with a Hamiltonian action of the loop group $L\sul{G}$, with proper moment map
\[ \mu_{\M_\Sigma}\colon \M_\Sigma \rightarrow \Omega^1_{\varsigma-1}(\partial \Sigma,\g)=L\ul{\g}^* \]
induced by pullback of the connection to the boundary. 
\end{definition}
$\M_\Sigma$ is the moduli space of flat connections on $\Sigma$ with framing along the boundary; it is an example of a proper Hamiltonian $L\sul{G}$-space, cf. \cite{MWVerlindeFactorization, meinrenken1999cobordism}. We review the construction of Banach manifold charts on $\M_\Sigma$ in the next section. The subgroup $\Omega \sul{G} \subset L\sul{G}$ of based loops acts freely and properly on $\M_\Sigma$.
\begin{definition}
The quotient
\[ M_\Sigma=\M_\Sigma/\Omega \sul{G} \]
is a smooth finite dimensional compact $\sul{G}$-space, that we refer to as the \emph{holonomy manifold} of $\M_\Sigma$. The quotient map $\M_\Sigma \rightarrow M_\Sigma$ will be denoted $\hol_{\M_\Sigma}$.
\end{definition}
There is a pullback diagram
\begin{equation} 
\label{eqn:Pullback}
\xymatrixcolsep{4pc}
\xymatrix{
\M_\Sigma \ar[r]^{\mu_{\M_\Sigma}} \ar[d]^{\hol_{\M_\Sigma}} & L\ul{\g}^* \ar[d]^{\hol} \\
M_\Sigma \ar[r] & \sul{G}
}
\end{equation}
Thus $M_\Sigma$ is the moduli space of flat connections on $\Sigma$ with framing at the set of points 
\[ P=\{p_1,...,p_b\} \subset \partial \Sigma \] 
given by the images of $0 \in S^1=\bR/\bZ$ under the boundary parametrizations. Since the gauge equivalence class of a flat connection can be described in terms of its holonomy data, there is an identification
\[ M_\Sigma\simeq \Hom(\Pi_1(\Sigma,P),G),\]
where $\Pi_1(\Sigma,P)$ is the fundamental groupoid of $\Sigma$ based at the set of points $P$. There is a residual action of $\sul{G}$ on $M_\Sigma$ changing the framing at the points $P$ and one has canonical identifications of the quotients
\begin{equation} 
\label{e:quotientsidentify}
\A_\Sigma^\flat/\G_\Sigma\simeq \M_\Sigma/L\sul{G}\simeq M_\Sigma/\sul{G} \simeq \Hom(\pi_1(\Sigma,p_1),G)/G.
\end{equation}
The manifold $M_\Sigma$ is an example of a quasi-Hamiltonian $\sul{G}$-space \cite{AlekseevMalkinMeinrenken}, although we shall not use that perspective in this article.

Later on we shall also need a slightly different presentation of $\M_\Sigma$. Let $\C$ be a collar neighborhood of $\partial \Sigma$. Let $\A_{\Sigma,\C}^\flat$ denote flat connections that are constant on $\C$ equal to the pullback of their pullback to the boundary, and let $\G_{\Sigma,\C}$ denote gauge transformations that are equal to $1$ on $\C$. Then one has
\begin{equation} 
\label{e:trivialcollar}
\M_\Sigma \simeq \A_{\Sigma,\C}^\flat/\G_{\Sigma,\C}.
\end{equation}
One can prove this using \eqref{e:quotientsidentify}---see for example the proof of \cite[Theorem 3.2]{MWVerlindeFactorization} for a similar argument.

Let $\ul{a}=(a_1,...,a_b) \in \ul{\mf{A}}=\mf{A}^b$. The symplectic quotient 
\[ \M_{\Sigma,\ul{a}}=\mu_{\M_\Sigma}^{-1}(\ul{a})/(L\sul{G})_{\ul{a}} \]
is the moduli space of flat connections on $\Sigma$ with holonomy around the $j$-th boundary component in the conjugacy class of $\exp(a_j)$.

\subsection{Coulomb gauge charts}
Manifold charts for $\M_\Sigma$ are constructed using the implicit function theorem for Banach spaces (see for example \cite[p.103]{donaldson1992boundary}, \cite[Section 3.1]{MWVerlindeFactorization}, \cite[Appendix C]{WoodwardHeatFlow}, \cite[Chapters 2, 4]{DonaldsonKronheimer}, \cite{atiyah1978self, wilkins1989slice}). We review this construction since we will make use of it later. 

Use a faithful representation $\bC^N$ to view $G$ as a matrix group. Note that for $A \in \A_\Sigma$, $g \in \G_\Sigma$
\begin{equation} 
\label{e:Aflat}
A-g\cdot A=(\d_A g)g^{-1},
\end{equation}
where $\d_Ag=\d g+[A,g]$ is the covariant derivative for the representation of $\g$ on $\End(\bC^N)$. If $g$ fixes $A$ then \eqref{e:Aflat} reads $\d_A g=0$, and thus by parallel translation $g$ is uniquely determined by its value at a single point of the connected manifold $\Sigma$. In particular if $g \in \G_{\Sigma,\partial \Sigma}$, then $g|_{\partial \Sigma}=1$ and uniqueness implies $g=1$ identically. This shows that the action of $\G_{\Sigma,\partial \Sigma}$ on $\A_\Sigma$ is free. At the level of Lie algebras, this means that the map
\begin{equation}
\label{e:injectiveorbit}
\gamma \in \Omega^0_{\varsigma+\frac{1}{2},\partial}(\Sigma,\g)\mapsto \d_A \gamma \in \Omega^1_{\varsigma-\frac{1}{2}}(\Sigma,\g)
\end{equation}
has trivial kernel, where the subscript `$\partial$' indicates that we impose the boundary condition $\gamma|_{\partial \Sigma}=0$. Equation \eqref{e:Aflat} further shows that if $g \in \G_{\Sigma,\partial \Sigma}$ and $\|A-g\cdot A\|_{\varsigma-\frac{1}{2}}$ is small, then $g$ is close to $1 \in \G_{\Sigma,\partial \Sigma}$ in the Sobolev $\varsigma+\frac{1}{2}$ topology (here we use ellipticity of $\d_A$ on $0$-forms and compactness of $\Sigma$).

Fix a Riemannian metric on $\Sigma$. Let $A \in \A$. We say that $A'$ is in \emph{Coulomb gauge relative to }$A$ if
\[ \d_A^*(A'-A)=0.\]
Define Coulomb gauge slices
\[ C_{A,\epsilon}=\{A'\in \A_\Sigma|\d_A^*(A'-A)=0,\|A'-A\|_{\varsigma-\frac{1}{2}}<\epsilon\}.\]
Note that these slices are equivariant for the full gauge group $\G_\Sigma$ in the sense that
\begin{equation}
\label{e:Couequiv}
C_{g\cdot A,\epsilon}=g\cdot C_{A,\epsilon}, \qquad g \in \G_\Sigma.
\end{equation} 
For $A \in \A_\Sigma^\flat$ let 
\[U_{A,\epsilon}=\{A' \in \A|\tn{curv}(A')=0, \d_A^*(A'-A)=0, \|A'-A\|_{\varsigma-\frac{1}{2}}<\epsilon\} \subset C_{A,\epsilon}.\]
\begin{proposition}
\label{p:charts}
Let $A \in \A_\Sigma$. There is an $\epsilon>0$ sufficiently small (depending on $A$) such that if $\|A'-A\|_{\varsigma-\frac{1}{2}}<\epsilon$ then there is a unique $g_A(A')\in \G_{\Sigma,\partial \Sigma}$ such that $g_A(A')\cdot A' \in C_{A,\epsilon}$, and moreover $g_A(A')$ depends smoothly on $A'$. Consequently $C_{A,\epsilon}$ is a slice for the $\G_{\Sigma,\partial \Sigma}$ action and $q \colon \A_\Sigma\rightarrow \A_\Sigma/\G_{\Sigma,\partial \Sigma}$ is a principal $\G_{\Sigma,\partial \Sigma}$-bundle. For $A \in \A_\Sigma^\flat$ and $\epsilon>0$ sufficiently small, there is a diffeomorphism from $U_{A,\epsilon}$ to its tangent space at $A$, $T_AU_{A,\epsilon}$, and the composition of the latter with the quotient map $q$ provides a local chart for $\M_\Sigma$ near $q(A)$.
\end{proposition}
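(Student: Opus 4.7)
The whole proposition is an exercise in the Banach implicit function theorem applied to elliptic boundary problems for $\d_A$ and its adjoint. I would organize the proof in three steps.

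For the first assertion I would consider the map
\[
F \colon \G_{\Sigma,\partial \Sigma}\times \A_\Sigma \longrightarrow \Omega^0_{\varsigma-\frac{3}{2}}(\Sigma,\g), \qquad F(g,A')=\d_A^*(g\cdot A'-A),
\]
so that $F(g,A')=0$ (together with a Sobolev-norm control on $g\cdot A'-A$) detects the Coulomb condition $g\cdot A'\in C_{A,\epsilon}$. Clearly $F(1,A)=0$. Using the formula for the infinitesimal gauge action from \eqref{e:Aflat}, the partial derivative at $(1,A)$ in the group direction is
\[
(\partial_g F)(1,A)(\gamma)=-\d_A^*\d_A\gamma, \qquad \gamma \in \Omega^0_{\varsigma+\frac{1}{2},\partial}(\Sigma,\g).
\]
This is a Dirichlet-type elliptic boundary value problem of Laplace class, hence Fredholm; integration by parts gives $\pair{\d_A^*\d_A\gamma}{\gamma}_{L^2}=\|\d_A\gamma\|^2_{L^2}$ with vanishing boundary terms because of the Dirichlet condition, so injectivity of $\d_A^*\d_A$ on $\Omega^0_{\varsigma+\frac{1}{2},\partial}$ follows from the injectivity of \eqref{e:injectiveorbit}, and formal self-adjointness upgrades this to an isomorphism onto $\Omega^0_{\varsigma-\frac{3}{2}}(\Sigma,\g)$. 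The Banach IFT then produces a smooth germ $A'\mapsto g_A(A')$ with $g_A(A)=1$, automatically landing in $\G_{\Sigma,\partial \Sigma}$ by continuity.

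The second assertion is essentially formal once step 1 is in hand: freeness of the $\G_{\Sigma,\partial \Sigma}$-action was already established in the excerpt, and combining uniqueness of $g_A(A')$ with the equivariance \eqref{e:Couequiv} shows that $(g,A')\mapsto g\cdot A'$ is a diffeomorphism from $\G_{\Sigma,\partial \Sigma}\times C_{A,\epsilon}$ onto an open neighborhood of the orbit through $A$, which gives the local trivialization of $q$ as a principal $\G_{\Sigma,\partial \Sigma}$-bundle and exhibits $C_{A,\epsilon}$ as a slice. For the third assertion I would restrict the curvature to the slice:
\[
\Phi \colon C_{A,\epsilon}\longrightarrow \Omega^2_{\varsigma-\frac{3}{2}}(\Sigma,\g), \qquad \Phi(A')=\tn{curv}(A').
\]
Since $C_{A,\epsilon}$ is an open subset of an affine Banach space with model $\ker \d_A^*\subset \Omega^1_{\varsigma-\frac{1}{2}}(\Sigma,\g)$, the linearization of $\Phi$ at the flat connection $A$ is $\d_A$ restricted to $\ker \d_A^*$. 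A second application of the IFT would realize $U_{A,\epsilon}=\Phi^{-1}(0)$ as a smooth Banach submanifold with tangent space $\ker\d_A\cap \ker\d_A^*$, parametrized by this tangent space through a graph construction; composing with $q$ yields the chart on $\M_\Sigma$.

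The main obstacle is the surjectivity of $\d_A\colon \ker \d_A^*\to \Omega^2_{\varsigma-\frac{3}{2}}(\Sigma,\g)$ needed in step 3. Topologically this is clear because $\Sigma$ has non-empty boundary and hence is homotopy equivalent to a $1$-dimensional CW complex, so the twisted cohomology $H^2(\Sigma;\g_A)$ vanishes and $\d_A\colon \Omega^1\to \Omega^2$ is surjective; the task is to transport this to the Sobolev-class statement compatible with the Coulomb constraint. Concretely, given $\beta \in \Omega^2_{\varsigma-\frac{3}{2}}(\Sigma,\g)$ one first chooses $\alpha_0 \in \Omega^1_{\varsigma-\frac{1}{2}}(\Sigma,\g)$ with $\d_A\alpha_0=\beta$, then uses invertibility of $\d_A^*\d_A$ from step 1 to find $\gamma \in \Omega^0_{\varsigma+\frac{1}{2},\partial}$ with $\d_A^*\d_A\gamma=\d_A^*\alpha_0$; the corrected form $\alpha_0-\d_A\gamma$ lies in $\ker\d_A^*$ and still maps to $\beta$. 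The only delicate input beyond step 1 is the initial solvability of $\d_A\alpha_0=\beta$ with the stated regularity, which follows from elliptic theory for $\d_A\oplus \d_A^*$ together with the absolute version of twisted de Rham cohomology on $\Sigma$.
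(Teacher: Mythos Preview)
Your argument for the slice (steps 1--2) matches the paper's almost exactly: the same map $F$, the same linearization $-\d_A^*\d_A$, and the same appeal to solvability of the Dirichlet problem (the paper invokes Lax--Milgram, you use Fredholmness plus injectivity plus index zero, which amounts to the same thing). One small point you gloss over is \emph{global} uniqueness of $g_A(A')$: the implicit function theorem gives uniqueness only near $g=1$, and one must invoke the observation following \eqref{e:Aflat} (that if both $A'-A$ and $g\cdot A'-A$ are small in the Sobolev norm then $g$ is forced close to $1$) to rule out any other $g$; the paper makes this reduction explicit before applying the IFT.

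The genuine difference is in step 3. You prove surjectivity of $\d_A\colon\ker\d_A^*\to\Omega^2_{\varsigma-3/2}$ in two stages: first solve $\d_A\alpha_0=\beta$ unconstrained by invoking $H^2(\Sigma;\g_A)=0$ for a surface with boundary, then project onto $\ker\d_A^*$ by subtracting $\d_A\gamma$. This is correct but brings in an extra analytic input (solvability of $\d_A$ on $1$-forms in the Sobolev scale, via absolute Hodge theory or otherwise) beyond what step 1 already supplied. The paper's route is shorter and recycles the same Dirichlet Laplacian: given $\beta\in\Omega^2_{\varsigma-3/2}$, solve $-\d_A^*\d_A\gamma=\ast\beta$ with $\gamma|_{\partial\Sigma}=0$, and set $\alpha=\ast\d_A\gamma$ directly. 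Then $\d_A\alpha=\beta$ follows from the Hodge-star identity relating $\d_A\ast\d_A$ to $\d_A^*\d_A$ on $0$-forms in dimension two, and $\d_A^*\alpha=0$ is automatic from $\d_A^2=0$ (flatness of $A$). The Hodge star thus both solves the equation and enforces the Coulomb constraint in one stroke, with no cohomological detour.
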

\begin{proof}
Let $\alpha=A'-A$. By assumption $\|\alpha\|_{\varsigma-\frac{1}{2}}<\epsilon$. Consider the function of $g \in \G_{\Sigma,\partial \Sigma}$, $\alpha \in \Omega^1_{\varsigma-\frac{1}{2}}(\Sigma,\g)$ defined by
\[ F(g,\alpha)=\d_A^*(g\cdot (A+\alpha)-A).\]
Clearly $F(1,0)=0$. We claim that for $\epsilon$ sufficiently small, the equation $F(g,\alpha)=0$ (with $\alpha$ fixed) has a unique solution $g \in \G_{\Sigma,\partial \Sigma}$ such that $\|g\cdot (A+\alpha)-A\|_{\varsigma-\frac{1}{2}}<\epsilon$. Since $\alpha$, $g\cdot (A+\alpha)-A$ are small in the Sobolev $\varsigma-\frac{1}{2}$ norm by assumption, the discussion following equation \eqref{e:Aflat} implies that $g$ is close to $1$ in $\G_{\Sigma,\partial \Sigma}$ in the Sobolev $\varsigma+\frac{1}{2}$ topology. Thus taking $\epsilon$ sufficiently small we can assume that we are in the region where the Banach space implicit function theorem applies. The first partial derivative of $F$ with respect to $g$ at the point $(1,0)$ is
\[ (\bm{\d}_1F)_{(1,0)}(\gamma)=-\d_A^*\d_A\gamma, \qquad \gamma \in \Omega^0_{\varsigma+\frac{1}{2},\partial}(\Sigma,\g).\]
Since the Dirichlet problem for the Laplace-type operator $\d_A^*\d_A$ is uniquely soluble (this follows from, for example, the Lax-Milgram theorem, using injectivity of \eqref{e:injectiveorbit} to check coercivity), the implicit function theorem yields existence and uniqueness of $g$ and shows moreover that $g=:g_A(A')$ depends smoothly on $A'=A+\alpha$.

The flow out $\G_{\Sigma,\partial \Sigma}\cdot C_{A,\epsilon}$ is an open subset of $\A$. It follows from the uniqueness proved above that every $A' \in \G_{\Sigma,\partial \Sigma}\cdot C_{A,\epsilon}$ is in the $\G_{\Sigma,\partial \Sigma}$ orbit of a unique element of $C_{A,\epsilon}$. Thus $C_{A,\epsilon}$ is a slice for the free $\G_{\Sigma,\partial \Sigma}$ action, providing a local section for $q$. A similar argument using the implicit function theorem shows that given two local sections $C_{A_i,\epsilon}$, $i=1,2$ with $\epsilon$ sufficiently small, the transition function $q(C_{A_1,\epsilon})\cap q(C_{A_2,\epsilon}) \rightarrow \G_{\Sigma,\partial \Sigma}$ is smooth.

Next let $A \in \A_\Sigma^\flat$. The derivative of $\tn{curv}(\cdot)$ at $A$ is 
\[ (\bm{\d}\tn{curv})_A(\alpha)=\d_A\alpha.\]
The map 
\[ \alpha \in \Omega^1_{\varsigma-\frac{1}{2}}(\Sigma,\g)\cap \ker(\d_A^*)\mapsto \d_A\alpha \in \Omega^2_{\varsigma-\frac{3}{2}}(\Sigma,\g) \] 
is surjective: $\d_A\alpha=\beta$ has a solution $\alpha=\ast \d_A \gamma$ where $\gamma$ is the unique solution of the Dirichlet problem
\[ -\d_A^*\d_A\gamma=\ast \beta, \qquad \gamma|_{\partial \Sigma}=0,\]
and moreover $\d_A^*\alpha=0$ since $\d_A^2=0$ by flatness of $A$. This shows that $0$ is a regular value of $\tn{curv}$. By the implicit function theorem, for $\epsilon$ sufficiently small, $U_{A,\epsilon}$ is diffeomorphic to its tangent space at $A$
\[ T_AU_{A,\epsilon}=\{\alpha \in \Omega^1_{\varsigma-\frac{1}{2}}(\Sigma,\g)|\d_A\alpha=0=\d_A^*\alpha\}.\]
That this yields a local chart for $\M_\Sigma$ near $q(A)$ follows.
\end{proof}

\section{Atiyah-Bott classes for framed moduli spaces of flat connections}\label{s:ABclasses}
In this section we will describe various equivariant K-theory classes of $\M_\Sigma$. These classes generalize K-theory classes introduced by Atiyah and Bott \cite{AtiyahBottYangMills} in case of a closed surface, and so we refer to these classes as Atiyah-Bott classes.

By $\sul{G}$-equivariant K-theory of a $\sul{G}$-space $Y$ we mean the $\sul{G}$-equivariant non-compactly supported K-theory, which we denote $K_{\sul{G}}(Y)=K^0_{\sul{G}}(Y)$: homotopy classes of $\sul{G}$-equivariant continuous maps from $Y$ to the space of Fredholm operators on $L^2(\sul{G})\otimes \ell^2(\bZ)$ equipped with the norm topology (cf. \cite[Section 5]{segal1970fredholm}). In this article it is convenient to represent K-theory classes by pairs $(\E,Q)$ where $\E \rightarrow Y$ is a $\bZ_2$-graded $\sul{G}$-equivariant Hilbert bundle (with structure group the unitary group carrying the norm topology), and $Q$ is a family of possibly unbounded odd self-adjoint Fredholm operators on the fibers of $\E$ such that the corresponding bounded family $F=Q(1+Q^2)^{-1/2}$ is norm continuous. Given such a pair $(\E,Q)$, one obtains a continuous map from $Y$ to the space of Fredholm operators by trivializing $\E$ (after stabilization if necessary) and taking the component $F^+$ of $F=Q(1+Q^2)^{-1/2}$ that maps $\E^+$ to $\E^-$. 

Fix a finite dimensional complex representation $(V,\pi_V)$ of $G$. Let $g\in \G_\Sigma$ act on the trivial bundle $\Sigma \times V$ by 
\[ g\cdot (p,v)=(p,\pi_V(g(p))v),\qquad p \in \Sigma, v \in V.\]
\begin{definition}
\label{d:EV}
The quotient of $\A_\Sigma^\flat\times (\Sigma \times V)$ by the diagonal action of $\G_{\Sigma,\partial \Sigma}$ is an $L\sul{G}$-equivariant vector bundle
\[ \sf{E}V=(\A_\Sigma^\flat \times \Sigma \times V)/\G_{\Sigma,\partial \Sigma} \rightarrow  (\A_\Sigma^\flat\times \Sigma)/\G_{\Sigma,\partial \Sigma}=\M \times \Sigma.\]
It defines a class $\sf{E}V\in K_{\sul{G}}^0(\M\times \Sigma)$.
\end{definition}

The strategy is to construct K-theory classes on $\M_\Sigma$ by pulling $\sf{E}V$ back to $\M \times N$ where $\iota \colon N \hookrightarrow \Sigma$ is a submanifold, and then push forward along $p\colon \M \times N \rightarrow \M$. The class so obtained only depends on the homotopy class of the map $\iota$. This works directly when $\dim(N)=0,1$ and, suitably interpreted, when $\dim(N)=2$ (i.e. $N=\Sigma$).

\subsection{Atiyah-Bott classes for $\dim(N)=0,1$}\label{s:ABclasses}
The simplest case is $N=\pt$. Since $\Sigma$ is connected, any two points in $\Sigma$ are connected by a path, and hence we obtain only one distinct K-theory class $\sf{E}^{\pt}V \in K^0_{\sul{G}}(\M_\Sigma)$. Choosing the point to lie on $\partial \Sigma$, we see that $\sf{E}^{\pt}V$ is represented by the trivial vector bundle $\M_\Sigma \times V$, where all but one of the copies of $G$ in $\sul{G}=G^b$ acts trivially on the fibers (in particular, choosing different factors in $G^b$ to act on the fiber $V$ leads to homotopic $G^b$-equivariant vector bundles).

Moving to dimension $1$, let $\iota \colon N=C\hookrightarrow \Sigma$ be a closed curve. Pulling $\sf{E}V$ back to $\M_\Sigma \times C$ and taking $L^2$-sections along $C$ yields an $L\sul{G}$-equivariant Hilbert bundle:
\[ \E_C=(\A_\Sigma^\flat\times L^2(C,V))/\G_{\Sigma,\partial \Sigma} \rightarrow \A_\Sigma^\flat/\G_{\Sigma,\partial \Sigma}=\M_\Sigma \]
where $\G_\Sigma$ acts diagonally, with the action on $L^2(C,V)$ being through evaluation at points along $C$. (Note that the gauge action preserves the $L^2$-inner product.) Fix a parametrization $C\simeq S^1$ and let $\DiracC_0=\frac{1}{\i} \frac{\d}{\d s}$ acting on smooth sections of the trivial bundle $C\times V$ over $C$. More generally for $A \in \A_\Sigma^\flat$, let 
\[ \DiracC_A=\DiracC_0+\iota^*A.\] 
This defines a tautological family of differential operators 
\[ \DiracC_\bullet=\{\DiracC_A\mid A \in \A_\Sigma^\flat\} \] 
over $\A_\Sigma^\flat$, which is equivariant for the diagonal action of $\G_\Sigma$ on $\A_\Sigma^\flat\times L^2(C,V)$. Descending the family along the fibers of $q \colon \A_\Sigma^\flat\rightarrow \A_\Sigma^\flat/\G_{\Sigma,\partial \Sigma}=\M_\Sigma$ we obtain an $L\sul{G}$-equivariant family of Dirac operators, denoted $\DiracC_{q(\bullet)}$, acting on the fibers of the Hilbert bundle $\E_C$. This family defines an odd K-theory class $\sf{E}^CV \in K^1_{\sul{G}}(\M_\Sigma)$. Even classes can be obtained by taking products of even numbers of these classes. For example if $C_1,C_2$ are two closed curves, then $\sf{E}^{C_1}V_1\cdot \sf{E}^{C_2}V_2 \in K^0_{\sul{G}}(\M_\Sigma)$ is represented by a family of elliptic differential operators on the 2-torus $C_1 \times C_2 \subset \Sigma \times \Sigma$. 

\begin{proposition}
\label{p:oddclasshomotopy}
$\sf{E}^CV \in K^1_{N(\sul{T})}(X)$ only depends on the free homotopy class of $C$ in the closed Riemann surface $\bSigma$ obtained by gluing a disk to each component of the boundary of $\Sigma$.
\end{proposition}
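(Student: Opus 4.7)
The plan is to prove the proposition in two stages. First I would show the stronger preliminary statement that $\sf{E}^CV \in K^1_{\sul{G}}(\M_\Sigma)$ already depends only on the free homotopy class of $C$ within $\Sigma$. Second, I would show that upon restriction to $X$ and passage to $N(\sul{T})$-equivariance, there is additional invariance under insertion of loops around boundary components $\partial_j\Sigma$. Since two curves in $\Sigma$ that are freely homotopic in $\bSigma$ differ, at the level of conjugacy classes in $\pi_1(\Sigma)$, by successive multiplications by the boundary loops $\delta_j$, these two invariances together yield the proposition.

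For the first stage, given a smooth homotopy $\iota \colon S^1 \times [0,1] \to \Sigma$ between parametrizations of $C_0$ and $C_1$, I would consider the family of first-order self-adjoint elliptic operators $\DiracC_{A,t} = \DiracC_0 + \iota_t^*A$ on the fixed Hilbert space $L^2(S^1,V)$. Since $A \mapsto \iota_t^*A$ is bounded linear and norm-continuous in $t$, standard estimates for the resolvents of first-order self-adjoint elliptic operators on $S^1$ show that the bounded transform $F_{A,t} = \DiracC_{A,t}(1+\DiracC_{A,t}^2)^{-1/2}$ varies norm-continuously in $(A,t) \in \A_\Sigma^\flat \times [0,1]$. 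This family is $\G_{\Sigma,\partial\Sigma}$-equivariant (under $A \mapsto g \cdot A$ the operator transforms by the canonical unitary conjugation implementing pointwise multiplication by $g|_C$), and hence descends to a $\sul{G}$-equivariant Fredholm homotopy on $\M_\Sigma \times [0,1]$, establishing $\sf{E}^{C_0}V = \sf{E}^{C_1}V$ in $K^1_{\sul{G}}(\M_\Sigma)$.

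For the second stage, it suffices to treat the case where $C'$ is obtained from $C$ by concatenating at a basepoint with a small loop $\delta_j$ homotopic to $\partial_j\Sigma$. Using the presentation \eqref{e:trivialcollar} to move the relevant arc of $C'$ into a collar $\C$ of $\partial_j\Sigma$ on which the connection is the pullback of its boundary restriction, the Dirac operator $\DiracC_{C',A}$ becomes, after a continuously varying unitary conjugation, the Dirac operator $\DiracC_{C,A}$ composed with an insertion equivalent to multiplication by $\pi_V(u_j(A))$, where $u_j(A) = \hol_{\partial_j\Sigma}(\iota^*A) \in T$ is the boundary holonomy. The key observation is that on $X$, the discussion of the index homomorphism in the introduction provides an $N(\sul{T})$-equivariant retraction onto a subspace on which all boundary holonomies $u_j$ are trivial: in the disk example this subspace is $\Pi \cdot 0$, consisting of flat connections with boundary restriction $\xi\,\d\theta$, $\xi \in \Pi = \ker\exp_T$, so that $u_j = 1$. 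Pulling the Dirac family back along a homotopy contracting $u_j$ to the identity gives an $N(\sul{T})$-equivariant Fredholm homotopy between the families representing $\sf{E}^{C'}V|_X$ and $\sf{E}^CV|_X$.

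The main obstacle I anticipate is the second stage, specifically the claim that $X$ admits an $N(\sul{T})$-equivariant deformation retraction onto a locus of trivial boundary holonomies. This generalizes the disk computation in the introduction and relies on the explicit construction of $X$ as a variant of the Jeffrey--Kirwan extended moduli space, which is deferred to later sections. Once this retraction is in hand, the remaining identifications of Dirac families under gauge transformations supported in a collar are routine uniform-continuity arguments.
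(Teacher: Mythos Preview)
Your first stage is correct and matches the paper. The second stage has the right underlying idea but is misformulated, and the paper's route is both shorter and avoids the detour.

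The gap is in your key claim: there is no $N(\sul{T})$-equivariant retraction of $X$ onto a subspace where all boundary holonomies are trivial. The disk example is misleading. For genus $\sg\ge 1$ or $b\ge 2$, even on the $\sul{T}$-fixed locus $X^T\simeq\ul{\t}\times_{\sul{T}}\Hom(\Pi_1(\Sigma,P),T)$ the individual boundary holonomies $\exp(\mu_j)$ range over all of $T$; only their \emph{product} $\exp(\bar{\mu})$ is forced to be $1$. No deformation of $X$ sends each $\mu_j$ into $\Pi$. You also attribute this (nonexistent) retraction to the extended-moduli-space construction of $X$, which is a red herring.

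What you actually need is much weaker: that the map $u_j=\hol\circ\mu_j$ is equivariantly null-homotopic. This is exactly the paper's observation, and it holds already on $\M_\Sigma$ (not just on $X$) for the simple reason that $u_j$ factors through the $j$-th component of the moment map $\mu_j\colon\M_\Sigma\to L\g^*$, and $L\g^*\simeq P_1G$ is $G$-equivariantly contractible. The paper uses this directly: for $C=\partial_j\Sigma$ the whole family $(\DiracC_A)_A$ is pulled back from $L\g^*$, so $\sf{E}^{\partial_j\Sigma}V=0$ in $K^1_{\sul{G}}(\M_\Sigma)$, whence the data extend across the caps to $\M_\Sigma\times\bSigma$ and ordinary homotopy invariance in $\bSigma$ finishes. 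This bypasses your concatenation analysis entirely --- which is just as well, since ``$\DiracC_{C,A}$ composed with an insertion of $\pi_V(u_j)$'' is not a well-defined operation on first-order operators on $S^1$, and making that step precise would in any case reduce to the same contractibility of $L\g^*$.
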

\begin{proof}
By our remarks above, it is clear that $\sf{E}^CV$ only depends on the free homotopy class of $C$ in $\Sigma$. Suppose $C=\partial_j\Sigma$ is one of the boundary components of $\Sigma$. Then $\sf{E}^C V \in K^1_{\sul{G}}(\M_\Sigma)$ is the pullback of a tautological $G$-equivariant class over $L\g^*$. But $K^1_G(L\g^*)=0$ because $L\g^*$ is $G$-equivariantly contractible (by solving the parallel transport equation for a connection, $L\g^*$ is the same as the space $P_1 G$ of paths of Sobolev class $\varsigma$ in $G$ beginning at $1$). It follows from this that the K-theory class only depends on the free homotopy class of $C$ in $\bSigma$.
\end{proof}
 
The 2-dimensional case $N=\Sigma$ is less immediate. Since $\partial \Sigma\ne \emptyset$, the $\bar{\partial}$-operator on $\Sigma$ is not a Fredholm operator. There are several different approaches one might take to construct a Fredholm family. The method that we pursue below is to impose a boundary condition.
 
\subsection{A family of elliptic boundary problems.}\label{s:familyAPS}
Fix a Riemannian metric on $\Sigma$ of product form in a collar neighborhood of the boundary, where we use the standard metric on $\partial_j\Sigma\simeq S^1=\bR/\bZ$ giving the circle circumference $1$. Choose a spin structure on $\Sigma$ and let $\c^\Sigma \colon \bC l(T\Sigma)\rightarrow \End(S^\Sigma)$, $S^\Sigma=S^{\Sigma,+}\oplus S^{\Sigma,-}$ be the corresponding $\bZ_2$-graded $\bC l(T\Sigma)$-module. Let $\DiracS_0$ be the Dirac operator acting on sections of $S^\Sigma \boxtimes V$ (coupled to $\Sigma \times V$ using the trivial connection). 
\ignore{
\begin{remark}
If we equip $\Sigma$ with a complex structure, then $S^\Sigma=P\otimes \wedge T^*_{0,1}\Sigma$ where $P$ is a choice of holomorphic square root for the canonical bundle, and $\DiracS_0$ is the Hodge-Dolbeault operator $\sqrt{2}(\bar{\partial}^P+(\bar{\partial}^P)^*)$. (An only slightly more complicated index formula would result if we did not twist by $P$.)
\end{remark}
}
For $A \in \A_\Sigma^\flat$, let
\begin{equation}
\label{e:DiracFamily}
\DiracS_A=\DiracS_0+(\c^\Sigma \boxtimes \pi_V)(A)=\matr{0}{\DiracSM_A}{\DiracSP_A}{0},
\end{equation}
be the Dirac operator obtained by coupling the spin Dirac operator on $\Sigma$ to $\Sigma \times V$ using the connection (or vector potential) $A$. This defines a tautological family of Dirac operators $\DiracS_\bullet=(\DiracS_A)_{A \in \A_\Sigma^\flat}$ parametrized by $\A_\Sigma^\flat$.

Owing to the non-empty boundary $\partial \Sigma$, the operators $\DiracS_A$ are not Fredholm. To obtain a Fredholm family we impose a boundary condition. The spin structure $S^\Sigma$ on $\Sigma$ induces a spin structure $S^{\partial \Sigma}=S^{\Sigma,+}|_{\partial \Sigma}$ on $\partial \Sigma$. The general formula for the Clifford action on the boundary is $\c^{\partial \Sigma}(w)=\c^\Sigma(v)^{-1}\c^\Sigma(w)$, where $w \in T\partial \Sigma$ and $v$ is the interior-pointing unit normal vector field along the boundary. In this particularly simple case, note that $(v,\partial_s)$ is an oriented orthonormal frame of $T\Sigma|_{\partial \Sigma}$, hence $\c^{\partial \Sigma}(\partial_s)=-\c^\Sigma(v)\c^\Sigma(\partial_s)$ is $\i=\sqrt{-1}$ times the chirality operator, hence $\c^{\partial \Sigma}(\partial_s)$ acts as multiplication by $\i$ on $S^{\partial \Sigma}$. Let $\partial=\partial_0$ denote the spin Dirac operator on the boundary twisted by $V$, which acts on sections of $S^{\partial \Sigma}\boxtimes V$. Let $a \in L\ul{\g}^*=\Omega^1_{\varsigma-1}(\partial \Sigma,\g)$ be a connection on the boundary. The corresponding spin Dirac operator on the boundary is
\begin{equation} 
\label{e:partiala}
\partial_a=\partial+(\c^{\partial \Sigma}\boxtimes\pi_V)(a).
\end{equation}
For $v \in \bR$ let
\[ B_{< v}(\partial_a)\subset L^{2,\frac{1}{2}}(\partial \Sigma,S^{\partial\Sigma}\boxtimes V), \qquad S^{\partial \Sigma}=S^{\Sigma,+}|_{\partial \Sigma}, \]
be the closure (in the Sobolev $\frac{1}{2}$ norm) of the sum of the eigenspaces of $\partial_a$ with eigenvalue $< v$. We similarly define $B_{\le v}(\partial_a)$, $B_{>v}(\partial_a)$, $B_{\le v}(\partial_a)$. Let 
\[ \scr{R}\colon L^{2,1}(\Sigma,S^{\Sigma,+}\boxtimes V)\rightarrow L^{2,\frac{1}{2}}(\partial\Sigma,S^{\partial\Sigma}\boxtimes V) \]
denote restriction to the boundary.

Imposing the boundary condition $B_{<0}(\partial)$ on the operators of the family $\DiracSP_\bullet$, we obtain a family of unbounded Fredholm operators:
\begin{definition}
\label{d:thefamily}
For $A \in \A_\Sigma^\flat$, let $(\DiracSP_A,B_{<0}(\partial))$ denote the unbounded Hilbert space operator defined by the differential operator $\DiracSP_A$ with domain
\[ \dom(\DiracSP_A,B_{<0}(\partial))=L^{2,1}(\Sigma,S^{\Sigma,+}\boxtimes V)\cap \scr{R}^{-1}(B_{<0}(\partial)).\]
The corresponding odd self-adjoint operator in $L^2(\Sigma,S^\Sigma\boxtimes V)$ is denoted
\[ D_A=\matr{0}{(\DiracSP_A,B_{<0}(\partial))^*}{(\DiracSP_A,B_{<0}(\partial))}{0}.\]
Let $D_\bullet=\{D_A\mid A \in \A_\Sigma^\flat\}$ denote the tautological family. The bounded transforms are denoted
\[ F_A=(1+D_A^2)^{-1/2}D_A, \qquad F_\bullet=(F_A)_{A \in \A_\Sigma^\flat}.\]
\end{definition}
The condition $\scr{R}(s)\in B_{< 0}(\partial)$ is the \emph{Atiyah-Patodi-Singer (APS) boundary condition} \cite{AtiyahPatodiSingerI} for $\DiracSP_0$. The operator $D_A^+=(\DiracSP_A,B_{<0}(\partial))$ is Fredholm, and in fact elliptic in the strong sense that there is an estimate $\|s\|_1\le C(\|D_A^+ s\|+\|s\|)$ for the $L^{2,1}$-norm when $s \in \dom(D_A^+)$; by the Rellich lemma $D_A$ has compact resolvent. A detailed recent reference for the APS and other more general elliptic boundary problems is \cite{BarBallmann, BarBallmannGuide}. More generally one can define Fredholm operators $(\DiracSP_A,B_{<v}(\partial_a))$ for any $v \in \bR$, $a \in L\ul{\g}^*$. The index is independent of $A$, but is highly sensitive to the choice of $v$, $a$.

The index of each operator in the family of Definition \ref{d:thefamily} is $0$. We may check this using the Atiyah-Patodi-Singer theorem. The spin structure on $S^1$ induced by that on $\Sigma$ is the non-trivial one (cf. \cite[Exercise 2.2.18]{DanFreedLecNotes}). The corresponding Dirac operator is (after making a non-canonical choice of trivialization of $S^{\partial \Sigma}$) the operator $\i \frac{\d}{\d s}-\pi$. The latter has spectrum $(2\bZ+1)\pi$, trivial kernel and trivial $\eta$-invariant. Since $\Ahat(\Sigma)=1$, the Atiyah-Patodi-Singer theorem reads
\[ \index(\DiracSP_A,B_{<0}(\partial))=0-\frac{0+0}{2}=0.\]
See also \cite[pp.61--62]{AtiyahPatodiSingerI} for general discussion of the spin case, and also for the discussion of the $\ol{\partial}$-operator on Riemann surfaces.

\subsection{Atiyah-Bott classes for $N=\Sigma$.}\label{s:ABSigma}
The gauge group $\G_\Sigma$ acts on the Hilbert space $\E_0=L^2(\Sigma,S^\Sigma\boxtimes V)$ point-wise via the representation $\pi_V$. Taking the quotient by the diagonal action produces a smooth $L\sul{G}$-equivariant Hilbert bundle
\[ \E=\big(\A_\Sigma^\flat\times \E_0\big)/\G_{\Sigma,\partial \Sigma} \rightarrow \A_\Sigma^\flat/\G_{\Sigma,\partial \Sigma}=\M_\Sigma. \]
Equivalently $\E$ is obtained from the vector bundle $(\bC\boxtimes S^\Sigma)\otimes EV$ over $\M_\Sigma\times \Sigma$ by taking $L^2$-sections along the fibers $\M_\Sigma\times \Sigma \rightarrow \M_\Sigma$.

The family of differential operators $\DiracS_\bullet$ is equivariant for the full $\G_\Sigma$-action, but the boundary condition is only equivariant for the action of a subgroup.
\begin{proposition}
\label{p:gaugetransfamily}
Under the diagonal action of $g \in \G_\Sigma$ on $\A_\Sigma^\flat\times L^2(\Sigma,S^\Sigma\boxtimes V)$, the family $D_\bullet=(\DiracS_\bullet,B_{<0}(\partial))$ is sent to the family
\[ (\DiracS_\bullet,B_{< 0}(\partial_{\scr{R}(g)\cdot 0})), \]
where $\scr{R}(g)$ is the restriction of $g$ to the boundary. Consequently $D_\bullet$ (or equivalently the bounded transform $F_\bullet$) is equivariant for the subgroup $\G_{\Sigma,\partial \Sigma-lc}$ consisting of $g \in \G_\Sigma$ such that $\scr{R}(g)$ is locally constant on $\partial \Sigma$; equivalently $\scr{R}(g) \in L\sul{G}_0=\sul{G}$, the stabilizer of $0 \in L\ul{\g}^*$ under the gauge action.
\end{proposition}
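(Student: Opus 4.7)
The plan is to unpack what happens to the differential operator, the boundary Dirac operator and the spectral boundary subspace separately, then reassemble. Throughout, let $U_g$ denote the pointwise unitary action of $g\in\G_\Sigma$ on $L^2(\Sigma,S^\Sigma\boxtimes V)$ via $\pi_V$, and let $u_{g_\partial}$ denote the analogous action on $L^2(\partial\Sigma,S^{\partial\Sigma}\boxtimes V)$, where $g_\partial=\scr{R}(g)$. The restriction $\scr{R}\colon L^{2,1}\to L^{2,1/2}$ is equivariant, $\scr{R}\circ U_g=u_{g_\partial}\circ\scr{R}$.

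First I would verify the standard covariance of the family: since $\DiracS_0$ involves only the spin connection and the Clifford action, and since the twisting vector potential $A$ enters only through $(\c^\Sigma\boxtimes\pi_V)(A)$, a direct computation using $\d(g\pi_V(s))=g\,\pi_V(\d s)+(\d g)\pi_V(s)$ and the definition $g\cdot A=\Ad_g A-\d g\,g^{-1}$ gives the pointwise identity
\[ \DiracS_{g\cdot A}=U_g\,\DiracS_A\,U_g^{-1} \]
as differential operators on smooth sections. The same computation on the boundary, with $\partial_a$ as in \eqref{e:partiala}, yields $\partial_{g_\partial\cdot a}=u_{g_\partial}\,\partial_a\,u_{g_\partial}^{-1}$; in particular $\partial_{g_\partial\cdot 0}$ is unitarily conjugate to $\partial_0=\partial$, so it is self-adjoint with discrete spectrum and the conjugation carries spectral subspaces to spectral subspaces:
\[ B_{<v}(\partial_{g_\partial\cdot a})=u_{g_\partial}\cdot B_{<v}(\partial_a). \]

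Next I would identify how the domain of $D_A$ moves under $U_g$. By construction $\dom(D_A^+)=\{s\in L^{2,1}(\Sigma,S^{\Sigma,+}\boxtimes V):\scr{R}(s)\in B_{<0}(\partial)\}$, and $U_g$ preserves $L^{2,1}$. Using boundary-equivariance of $\scr{R}$,
\[ \scr{R}(U_g s)=u_{g_\partial}\scr{R}(s)\in u_{g_\partial}\cdot B_{<0}(\partial)=B_{<0}(\partial_{g_\partial\cdot 0}), \]
so $U_g\cdot\dom(D_A^+)=\dom(\DiracSP_{g\cdot A},B_{<0}(\partial_{g_\partial\cdot 0}))$. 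Combining with the conjugation formula for the differential operator gives
\[ U_g\,D_A^+\,U_g^{-1}=(\DiracSP_{g\cdot A},\,B_{<0}(\partial_{g_\partial\cdot 0})), \]
and taking the self-adjoint off-diagonal extension yields the claimed transformation law for $D_\bullet$.

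Finally, for the equivariance statement I would just specialize: $U_g D_A U_g^{-1}=D_{g\cdot A}$ precisely when the new boundary condition coincides with the old, i.e.\ $B_{<0}(\partial_{g_\partial\cdot 0})=B_{<0}(\partial)$, which happens iff $\partial_{g_\partial\cdot 0}=\partial$, iff $g_\partial\cdot 0=0$. By the gauge formula \eqref{e:gauge}, $g_\partial\cdot 0=-\d g_\partial\,g_\partial^{-1}$, which vanishes iff $g_\partial$ is locally constant on $\partial\Sigma$, i.e.\ $g\in\G_{\Sigma,\partial\Sigma\text{-lc}}$ or equivalently $\scr{R}(g)\in(L\sul{G})_0=\sul{G}$. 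The same argument applies verbatim to the bounded transform $F_\bullet$. The only mild subtlety, which I would address in passing, is making sure that $U_g$ is bounded on $L^{2,1}$ (it is, because $g$ has Sobolev class $\varsigma+\tfrac12>2$ and hence acts by multipliers in Sobolev regularity $\le\varsigma+\tfrac12$), so that the domain equality above is genuinely an equality inside $L^{2,1}$; this is where the regularity choices in Section~\ref{s:modsp} get used.
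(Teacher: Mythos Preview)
Your argument is correct and follows the same route as the paper's own proof, which simply asserts that $(\DiracSP_A,B_{<0}(\partial))$ is sent by $g$ to $(\DiracSP_{g\cdot A},B_{<0}(\partial_{\scr{R}(g)\cdot 0}))$ and then observes that $\scr{R}(g)\cdot 0=-\d\scr{R}(g)\,\scr{R}(g)^{-1}=0$ for $g\in\G_{\Sigma,\partial\Sigma\text{-lc}}$; you have unpacked the conjugation identity and the domain computation that the paper leaves implicit. One small comment: in the final paragraph the chain of ``iff''s is stronger than what the proposition asserts --- the statement only claims equivariance \emph{for} the subgroup, not that this subgroup is exactly the equivariance group --- and the step $B_{<0}(\partial_{g_\partial\cdot 0})=B_{<0}(\partial)\Rightarrow \partial_{g_\partial\cdot 0}=\partial$ would need a separate argument; but since only the ``if'' direction is required, this does not affect the proof.
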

\begin{proof}
Under the action of $g \in \G_\Sigma$ on $\E_0$, the operator $(\DiracSP_A,B_{<0}(\partial))$ is sent to the operator $(\DiracSP_{g\cdot A},B_{< 0}(\partial_{\scr{R}(g)\cdot 0}))$, where the action of $g$ on $A$ (resp. $\scr{R}(g)$ on $0$) is by gauge transformations.  If $g \in \G_{\Sigma,\partial \Sigma-lc}$ then $\scr{R}(g)\cdot 0=-\d \scr{R}(g) \scr{R}(g)^{-1}=0$ by definition, hence $g$ maps the family into itself.
\end{proof}

Since $\G_{\Sigma,\partial \Sigma} \subset \G_{\Sigma,\partial \Sigma-lc}$, passing to $\G_{\Sigma,\partial \Sigma}$-equivalence classes yields a $\G_{\Sigma,\partial \Sigma-lc}/\G_{\Sigma,\partial \Sigma}\simeq \sul{G}$-equivariant family of Fredholm operators over $\M_\Sigma$. Recall $q \colon \A_\Sigma^\flat\rightarrow \A_\Sigma^\flat/\G_{\Sigma,\partial \Sigma}=\M_\Sigma$ is the quotient map.
\begin{definition}
\label{d:opfam}
Let $D_{q(\bullet)}$ denote the $L\sul{G}_0=\sul{G}$-equivariant family of unbounded Fredholm operators acting on the fibers of $\E$ and parametrized by $\M_\Sigma$, which is obtained by descending the family $D_\bullet$ along the fibers of $q$. Likewise let $F_{q(\bullet)}$ denote the family of bounded transforms.
\end{definition}

The family $F_{q(\bullet)}$ varies norm-continuously. Indeed the family $F_\bullet$ on $\A_\Sigma^\flat$ is norm-continuous, on account of the fact that the boundary condition $B_{<0}(\partial)$ is fixed, and the differential operators $\DiracS_\bullet$ all have the same symbol. As explained in Proposition \ref{p:charts}, $\A_\Sigma^\flat\rightarrow \M_\Sigma$ is a principal $\G_{\Sigma,\partial \Sigma}$-bundle. Elements of $\G_{\Sigma,\partial \Sigma}$ are of Sobolev class $\varsigma+\frac{1}{2}>2$, and as the latter Sobolev norm dominates the supremum norm, the map $\G_{\Sigma,\partial \Sigma}\rightarrow \scr{U}(\E_0)$ is continuous with respect to the norm topology. Thus the structure group of the Hilbert bundle $\E$ can be taken to be the unitary group $\scr{U}(\E_0)$ with the norm topology, and continuity of $F_{q(\bullet)}$ follows.

\begin{definition}
\label{d:Esigma}
The pair $(\E,F_{q(\bullet)})$ defines a class $\sf{E}^\Sigma V \in K^0_{\sul{G}}(\M_\Sigma)$.
\end{definition}

\subsection{Affine Weyl group symmetry.}\label{s:affineWeylaction}
Let $\Gamma$ be a topological group and let $\Gamma' \subset \Gamma$ a compact normal subgroup. If $Y$ is a $\Gamma$-space, then the component group $\pi_0(\Gamma/\Gamma')$ acts on $K^*_{\Gamma'}(Y)$. When $\gamma \in \Gamma$ commutes with $\Gamma'$, the action is simply pullback by $\gamma^{-1}$. In particular for $Y=\M_\Sigma$ there is an action of the affine Weyl group $\sul{W}_{\aff}=W_{\aff}^b$ on $K^*_{\sul{T}}(\M_\Sigma)$. Forgetting the $\sul{G}$-action, the Atiyah-Bott classes $\sf{E}^NV$ restrict to classes in $K^*_{\sul{T}}(\M_\Sigma)$ that we denote by the same symbols. For $\dim(N)=0,1$ the resulting class is $\sul{W}_{\aff}$-invariant.

Let $\Tr_V \colon T \rightarrow \bC$ be the restriction of the character of the finite dimensional representation $V$ to the maximal torus $T \subset G$. It admits a decomposition
\[ \Tr_V=\sum_{\lambda \in \Lambda} \dim(V_{[\lambda]})e^\lambda \]
where $V_{[\lambda]}=\Hom(\bC_\lambda,V)$ is the $\lambda$ weight space.  Let $\ul{\Tr}_V$ be the character of $V^{\oplus b}$ viewed as a representation of $\sul{T}=T^b$,
\[ \ul{\Tr}_V \colon \ul{u}=(u_1,...,u_b)\in \sul{T}=T^b\mapsto \Tr_V(u_1)+\cdots+\Tr_V(u_b)\in \bC.\]
If $\eta \in \Pi$ let
\begin{equation}
\label{e:grad}
\partial_\eta \Tr_V=\sum_{\lambda \in \Lambda} \dim(V_{[\lambda]})2\pi \i\pair{\lambda}{\eta}e^\lambda,
\end{equation} 
be the derivative of $\Tr_V$ in the direction $\eta$. Since $\pair{\lambda}{\eta} \in \bZ$, $(2\pi \i)^{-1}\partial_\eta \Tr_V \in R(T)$. Similarly for $\ul{\eta}=(\eta_1,...,\eta_b)\in \ul{\Pi}$ and $\ul{u}=(u_1,...,u_b)$, let
\[ (\partial_{\ul{\eta}}\ul{\Tr}_V)(\ul{u})=\sum_{j=1}^b(\partial_{\eta_j}\Tr_V)(u_j).\]
\begin{proposition}
\label{p:affineWeylActionKthy}
The Weyl group $\sul{W}=W^b$ acts on $\sf{E}^\Sigma V \in K^0_{\sul{T}}(\M_\Sigma)$ trivially, while $\ul{\eta} \in \ul{\Pi}$ acts as $\ul{\eta}^*\sf{E}^\Sigma V=\sf{E}^\Sigma V-(2\pi \i)^{-1}\partial_{\ul{\eta}} \ul{\Tr}_V$.
\end{proposition}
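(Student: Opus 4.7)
The two statements are essentially independent, so I would handle them separately. The $\sul{W}$-invariance is the easier half: since $\sul{W} = N_{\sul{G}}(\sul{T})/\sul{T}$ is realized by elements of $\sul{G}$, and since (by Definition~\ref{d:Esigma}) the class $\sf{E}^\Sigma V$ is defined as an element of $K^0_{\sul{G}}(\M_\Sigma)$ before being restricted to $\sul{T}$-equivariant K-theory, the pullback by any $n_w \in N_{\sul{G}}(\sul{T})$ agrees with the pullback by the identity (because $\sf{E}^\Sigma V$ is equivariant under all of $\sul{G}$). Hence the induced $\sul{W}$-action on $\sf{E}^\Sigma V \in K^0_{\sul{T}}(\M_\Sigma)$ is trivial.

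For the $\ul{\Pi}$-action, I would first unpack how $\ul{\eta} \in \ul{\Pi}$ acts on $\M_\Sigma$. Let $\ul{u}_\eta \colon s \mapsto \exp(s\eta)$ be the corresponding element of $L\sul{T} \subset L\sul{G}$; it centralizes $\sul{T}$, so its gauge action on $\M_\Sigma$ induces the action of $\ul{\eta}$ on $K^0_{\sul{T}}(\M_\Sigma)$. Because $G$ is simply connected, the obstruction classes in $H^1(\Sigma,\partial\Sigma;\pi_0(G)) \oplus H^2(\Sigma,\partial\Sigma;\pi_1(G))$ vanish, so $\ul{u}_\eta$ extends to a smooth gauge transformation $\tilde{u}_\eta \in \G_\Sigma$ with $\scr{R}(\tilde{u}_\eta) = \ul{u}_\eta$. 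Applying Proposition~\ref{p:gaugetransfamily} to $g = \tilde{u}_\eta$, and noting that $\scr{R}(\tilde{u}_\eta) \cdot 0 = -\d \ul{u}_\eta \, \ul{u}_\eta^{-1} = -\eta\, \d s$, I conclude that $\ul{\eta}^* \sf{E}^\Sigma V$ is represented by the family of Fredholm operators $(\DiracSP_\bullet, B_{<0}(\partial_{-\eta \d s}))$ on the same Hilbert bundle $\E$. The difference with the original representative $(\DiracSP_\bullet, B_{<0}(\partial))$ is thus purely a change of APS boundary condition on a fixed family of differential operators.

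Next I would compute this change of boundary condition weight by weight. Restricting to the weight-$\lambda$ component $V_{[\lambda]}$, the scalar $\pi_V(\eta) = 2\pi \i \pair{\lambda}{\eta}$ acts on $V_{[\lambda]}$, so $\partial_{-\eta \d s}$ on $V_{[\lambda]}$-valued spinors equals $\partial + 2\pi \pair{\lambda}{\eta}$, with spectrum $(2\bZ+1)\pi + 2\pi\pair{\lambda}{\eta}$. Comparing the counts of negative eigenvalues shows that on the $V_{[\lambda]}$-component, $B_{<0}(\partial_{-\eta\d s})$ differs from $B_{<0}(\partial)$ by the virtual $T$-space $-\pair{\lambda}{\eta}\cdot V_{[\lambda]}$. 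Summing over $\lambda$ and using \eqref{e:grad}, the virtual difference of boundary conditions is
\[ B_{<0}(\partial_{-\eta\d s}) \ominus B_{<0}(\partial) = -\sum_{\lambda}\pair{\lambda}{\eta}\dim(V_{[\lambda]})\, e^\lambda = -(2\pi \i)^{-1}\partial_\eta \Tr_V \in R(\sul{T}). \]
The $b$-boundary version, with $\ul{\eta} = (\eta_1,\dots,\eta_b)$, is obtained by summing these contributions over boundary components, giving $-(2\pi \i)^{-1}\partial_{\ul{\eta}}\ul{\Tr}_V$.

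The final and main step is to convert this finite-rank difference of boundary data into a difference of K-theory classes, i.e.\ to show
\[ [(\DiracSP_\bullet, B_{<0}(\partial_{-\eta \d s}))] - [(\DiracSP_\bullet, B_{<0}(\partial))] = B_{<0}(\partial_{-\eta \d s}) \ominus B_{<0}(\partial) \]
as elements of $K^0_{\sul{T}}(\M_\Sigma)$. This is a families version of the classical change-of-boundary-condition formula for APS problems, and I expect it to be the principal obstacle. The most direct approach is to split off the (constant, finite-rank, $\sul{T}$-equivariant) finite spectral block that differs between the two projections as an invertible summand, thereby exhibiting the two Fredholm families as differing by a constant virtual equivariant bundle; one must check carefully that the residual family is continuously homotopic to the other Fredholm family within the relevant space of APS problems, keeping track of graded structures. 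Once this relative index formula is in hand, combining it with the character computation above yields the desired identity $\ul{\eta}^* \sf{E}^\Sigma V = \sf{E}^\Sigma V - (2\pi \i)^{-1}\partial_{\ul{\eta}}\ul{\Tr}_V$.
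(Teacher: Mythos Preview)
Your approach is essentially the same as the paper's: both use that $\sf{E}^\Sigma V$ comes from $K^0_{\sul{G}}(\M_\Sigma)$ for the $\sul{W}$-invariance, both invoke Proposition~\ref{p:gaugetransfamily} to identify $\ul{\eta}^*\sf{E}^\Sigma V$ as the same differential family with the shifted APS boundary condition, and both compute the spectral difference weight-by-weight to obtain $-(2\pi\i)^{-1}\partial_{\ul{\eta}}\ul{\Tr}_V$.

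The one place where the paper is more explicit is exactly the step you flag as the ``principal obstacle'': the families relative-index identity. Rather than splitting off a finite-rank block and arguing by homotopy as you suggest, the paper uses a clean composition trick. Let $B_1=B_{\ge 0}(\partial)$, $B_2=B_{\ge 0}(\partial_{\ul{\eta}})$, $B_0=B_1+B_2$, and form the three families $\bm{D}_i=(\DiracSP_A,\,P_i\circ\scr{R})$ mapping into $L^2\oplus B_i$. Then $\bm{D}_i=(\id,p_i)\circ\bm{D}_0$ with $p_i=P_i|_{B_0}$, and additivity of the families index under composition gives $[\bm{D}_1]-[\bm{D}_2]=[p_1]-[p_2]=[B_1^\perp\cap B_2]-[B_2^\perp\cap B_1]$, which are precisely the finite-dimensional spectral blocks you computed. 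This avoids any delicate homotopy argument. (One minor point: be careful with your sign convention for the embedding $\ul{\Pi}\hookrightarrow L\sul{G}$; the paper uses $s\mapsto\exp(-s\eta_j)$, and since $\ul{\eta}^*$ is pullback by $\ul{\eta}^{-1}$, the signs need to be tracked consistently to land on the correct sign in the final formula.)
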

\begin{proof}
The class $\sf{E}^\Sigma V$ comes from a class in $K^0_{\sul{G}}(\M_\Sigma)$, hence $\sul{W}$ acts trivially. Recall $\ul{\eta}=(\eta_1,...,\eta_b) \in \ul{\Pi}$ embeds in $L\sul{G}$ as the exponential loop $s\mapsto (\exp(-s\eta_1),...,\exp(-s\eta_b))$, $s \in [0,1]$. The corresponding gauge transformation along $\partial_j \Sigma$ is $s_j\mapsto \exp(-s_j\eta_j)$. Using Proposition \ref{p:gaugetransfamily} (for $\ul{\eta}^{-1}$) $\ul{\eta}^*\sf{E}^\Sigma V$ is represented by the family of boundary problems obtained by descending the family $(\DiracSP_A,B_{< 0}(\partial_{-\ul{\eta}}))_{A \in \A_\Sigma^\flat}$. From equation \eqref{e:partiala} and since $\c^{\partial \Sigma}(\partial_s)$ is multiplication by $\i$, the operator $\partial_{\ul{\eta}}$ is
\begin{equation} 
\label{e:bdop}
\partial - \i\pi_V(\ul{\eta}), 
\end{equation}
where we view $\ul{\eta}$ as the locally constant function on the boundary equal to $\eta_j$ on the $j$-th boundary component. We will deduce the result from \eqref{e:bdop} and a families version of the formula for the change of index under a change of boundary conditions (cf. \cite[Corollary 8.8, Theorem 8.15]{BarBallmann} for the standard version).

Let $P_1$ (resp. $P_2$,$P_0$) denote $L^2$-orthogonal projection onto $B_1=B_{\ge 0}(\partial)$ (resp. $B_2=B_{\ge 0}(\partial_{\ul{\eta}})$, $B_0=B_1+B_2$).  For $i=0,1,2$ consider the families $\bm{D}_i$ of operators over $\A_\Sigma^\flat$:
\[ (\DiracSP_A,P_i\circ \scr{R})\colon L^{2,1}(\Sigma, S^{\Sigma,+}\boxtimes V) \rightarrow L^2(\Sigma, S^{\Sigma,-}\boxtimes V)\oplus B_i.\]
Upon descending to $\M_\Sigma$, the families $\bm{D}_1$, $\bm{D}_2$ represent $\sf{E}^\Sigma V$, $\ul{\eta} \cdot \sf{E}^\Sigma V$ respectively. 

By construction
\[ \bm{D}_i=(\id,p_i)\circ \bm{D}_0 \]
holds for $i=1,2$, where $p_i=P_i|_{B_0}$ and $\circ$ denotes (pointwise) composition of families of operators over $\A_\Sigma^\flat$. By additivity of the index for compositions (families version) we obtain the following equations in $\R\sK^0_{\sul{T}}(\M_\Sigma)$,
\[ \sf{E}^\Sigma V=[\bm{D}_0]+[p_1], \qquad \ul{\eta} \cdot \sf{E}^\Sigma V=[\bm{D}_0]+[p_2] \]
hence
\[ \ul{\eta} \cdot \sf{E}^\Sigma V=(\ul{\eta}^{-1})^*\sf{E}^\Sigma V=\sf{E}^\Sigma V+[p_2]-[p_1].\]
We have $[p_1]=[B_1^\perp\cap B_2]$, $[p_2]=[B_2^\perp\cap B_1]$, and these subspaces are easy to determine. Consider $B_2^\perp\cap B_1=B_{<0}(\partial_{\ul{\eta}})\cap B_{\ge 0}(\partial)$. Let $\lambda \in \Lambda$. Restricted to sections of $V_{[\lambda]}$ over the $j$-th component of $\partial \Sigma$, equation \eqref{e:bdop} simplifies to $\partial-2\pi\pair{\lambda}{\eta_j}$. Recalling that the spectrum of $\partial$ is $(2\bZ+1)\pi$, we deduce that for the $\lambda$-isotypical component 
\[ \dim\big(B_{<0}(\partial-2\pi\pair{\lambda}{\eta_j})\cap B_{\ge 0}(\partial)\big)_{[\lambda]}=\begin{cases} \pair{\lambda}{\eta_j}\dim(V_{[\lambda]}), & \text{ if } \pair{\lambda}{\eta_j}>0\\ 0, & \text{ else.} \end{cases} \]
One finds a similar result for $B_1^\perp\cap B_2$, except with $\pair{\lambda}{\eta_j}$ replaced everywhere with $-\pair{\lambda}{\eta_j}$. Summing over $\lambda$, the contribution of the $j$-th boundary component to the difference $[p_2]-[p_1]$ is
\[ \sum_\lambda \pair{\lambda}{\eta_j}\dim(V_{[\lambda]})e_\lambda=(2\pi \i)^{-1}\partial_{\eta_j}\Tr_V.\]
Summing over $j$ gives the result.
\end{proof}

\begin{remark}
The action of $W_\aff$ cannot change the virtual dimension of the K-theory class. Hence the virtual dimension of $\partial_{\ul{\eta}} \ul{\Tr}_V$ should be $0$ for consistency. The virtual dimension is the sum over $j$ of $\partial_{\eta_j}\Tr_V$ evaluated at $1 \in G$, which is $0$ by $G$-invariance of the gradient of $\Tr_V$.
\end{remark}
\begin{remark}
If $m \in \M_\Sigma$ is a $\sul{T}$-fixed point, then 
\[ \sf{E}^\Sigma V|_m=\index_{\sul{T}}(D_m)\in K^0_{\sul{T}}(\pt)=R(\sul{T}). \] 
Proposition \ref{p:affineWeylActionKthy} shows that 
\[ \sf{E}^\Sigma V|_{\ul{\eta}\cdot m}=\sf{E}^\Sigma V|_m-(2\pi \i)^{-1}\partial_{\ul{\eta}} \ul{\Tr}_V \in R(\sul{T}).\] 
In particular this implies that the dimension of the kernel of the operators in the family $D_\bullet$ is unbounded. It also rules out the possibility that the class $\sf{E}^\Sigma V|_{\M_\Sigma^{\sul{T}}} \in K^0_{\sul{T}}(\M_\Sigma^{\sul{T}})$ is pulled back from any $\sul{T}$-equivariant compactification of $\M_\Sigma^{\sul{T}}$, because if that were so then the $\sul{T}$-equivariant index of $D_m$ would be bounded as $m$ varies in $\M_\Sigma^{\sul{T}}$.
\end{remark}

\section{The index of the Atiyah-Bott classes}
In \cite{LIndexFormula} we constructed (in the more general context of an arbitrary proper Hamiltonian loop group space) a sequence of $R(\sul{T})$-module homomorphisms
\begin{equation} 
\label{e:indexhom}
\index^k_{\sul{T}}\colon K^\ad_{\sul{T}}(\M_\Sigma)\rightarrow R^{-\infty}(\sul{T})
\end{equation}
out of an $R(\sul{T})$-subalgebra of $K_{\sul{T}}(\M_\Sigma)=K^0_{\sul{T}}(\M_\Sigma)$ consisting of K-classes satisfying an equivariant bounded geometry condition that we referred to as admissible classes. For classes in the image of the forgetful map $K^\ad_{\sul{G}}(\M_\Sigma)\rightarrow K^\ad_{\sul{T}}(\M_\Sigma)$, the index takes values in $R^{-\infty}(\sul{T})^{W-\tn{anti}}$, the $\sul{W}$-antisymmetric formal Fourier series, i.e. formal series
\[ \sum_{\ul{\lambda} \in \ul{\Lambda}} m(\ul{\lambda})e^{\ul{\lambda}}, \quad \text{such that} \quad m(\ul{w}\cdot \ul{\lambda})=(-1)^{l(\ul{w})}m(\ul{\lambda}), \quad \forall \ul{w} \in \sul{W}, \ul{\lambda} \in \ul{\Lambda}.\]
There is an induction isomorphism $R^{-\infty}(\sul{T})^{W-\tn{anti}}\simeq R^{-\infty}(\sul{G})$, hence there are similar $R(\sul{G})$-module homomorphisms
\[ \index^k_{\sul{G}}\colon K^\ad_{\sul{G}}(\M_\Sigma)\rightarrow R^{-\infty}(\sul{G}).\]
In this section we recall the definition of the index homomorphism \eqref{e:indexhom} and prove that the Atiyah-Bott classes are admissible.

\subsection{Global transversals}\label{s:globtrans}
The orbits of the action of $G$ on itself by conjugation are transverse to any small tubular neighborhood of $T$ inside $G$. Such a neighborhood can be lifted to a `thickening' of the subspace $\t \subset L\g^*=\Omega^1(S^1,\g)$ of constant $\t$-valued 1-forms. Let $\t^\perp$ be the orthogonal complement to $\t$ inside $\g$. Let $N(T)\ltimes \Pi$ act on $\t$ via the standard action of the affine Weyl group $W_\aff=W\ltimes \Pi=(N(T)\ltimes \Pi)/T$, and on $\t^\perp$ via the adjoint action of $N(T)$ (with $\Pi$ acting trivially). Let $\ol{B}_\epsilon(\t^\perp)$ denote the closed ball in $\t^\perp$ of radius $\epsilon$.
\begin{proposition}[\cite{LMSspinor}, Section 6.4]
\label{p:trans}
For $\epsilon>0$ sufficiently small, the inclusion $\t \hookrightarrow L\g^*$ extends to an $N(T)\ltimes \Pi$-equivariant embedding
\[ \t \times \ol{B}_\epsilon(\t^\perp) \rightarrow L\g^* \]
with image $R \subset L\g^*$ transverse to the $\Omega G$ orbits and contained in the dense subspace of smooth loops. Each $\Omega G$ orbit intersects $R$ non-trivially in an orbit of $\Pi$.
\end{proposition}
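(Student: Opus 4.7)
The plan is to lift a tubular neighborhood of $T\subset G$ through the holonomy map $\hol\colon L\g^*\to G$. Fix a smooth bump function $\chi\colon S^1\to \bR_{\ge 0}$ supported in a small interval $I\subset (0,1)$ with $\int_{S^1}\chi(s)\,\d s=1$, and define
\[ \Phi\colon \t\times \t^\perp \to L\g^*, \qquad \Phi(\xi,\eta)=\xi\,\d s+\chi(s)\,\Ad_{\exp(-s\xi)}(\eta)\,\d s. \]
This lands in smooth $\g$-valued 1-forms, and $\Phi(\xi,0)=\xi\,\d s$ extends the standard inclusion $\t\hookrightarrow L\g^*$. The proposition then reduces to computing the holonomy, checking equivariance, and deducing transversality and the fiber structure.

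For the holonomy, apply the non-loop gauge transformation $g(s)=\exp(s\xi)$ to $\Phi(\xi,\eta)$: this cancels the $\xi\,\d s$ term and conjugates the perturbation to $\chi(s)\eta\,\d s$, whose holonomy is $\exp(\eta)$. Tracking the endpoint correction $g(1)=\exp(\xi)$ yields $\hol\circ\Phi(\xi,\eta)=\exp(\eta)\exp(\xi)$ (up to conventions), so $\hol\circ\Phi$ factors as $\t\times \t^\perp\xrightarrow{\exp\times \id}T\times \t^\perp\to G$ where the last arrow is the standard tubular parametrization $(u,\eta)\mapsto \exp(\eta)u$. For $N(T)$-equivariance, the constant gauge transformation by $n\in N(T)$ representing $w\in W$ sends $\Phi(\xi,\eta)$ to $\Phi(w\xi,\Ad_n\eta)$, since $n\exp(-s\xi)n^{-1}=\exp(-s\,w\xi)$. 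For $\Pi$-equivariance, the loop gauge $g_\mu(s)=\exp(-s\mu)\in \Omega G$ (a loop because $\mu\in\Pi$) satisfies $g_\mu\cdot\Phi(\xi,\eta)=\Phi(\xi+\mu,\eta)$ by direct computation, using $\d g_\mu\,g_\mu^{-1}=-\mu\,\d s$ together with $g_\mu(s)\exp(-s\xi)=\exp(-s(\xi+\mu))$.

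Choose $\epsilon>0$ small enough that $(u,\eta)\mapsto \exp(\eta)u$ is a diffeomorphism onto a tubular neighborhood of $T\subset G$. Then $\Phi|_{\t\times \ol{B}_\epsilon(\t^\perp)}$ is injective: $\xi$ is read off from the constant value of $\Phi(\xi,\eta)$ outside $\supp(\chi)$, and $\eta$ is then determined from $\hol\circ\Phi(\xi,\eta)$ given $\exp(\xi)$. The $\Omega G$-orbit through any $r\in R$ equals $\hol^{-1}(\hol(r))$, so transversality of $R$ amounts to surjectivity of $d\hol|_{T_rR}$ onto $T_{\hol(r)}G$; since $\dim R=\dim\g=\dim G$ and $\hol|_R$ is a local diffeomorphism onto its open image, this is immediate, and then $T_rR + \ker d\hol|_r = T_rL\g^*$. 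Two points $\Phi(\xi_1,\eta_1)$ and $\Phi(\xi_2,\eta_2)$ lie in a common $\Omega G$-orbit iff they share holonomy, which by the explicit formula forces $\eta_1=\eta_2$ and $\xi_1-\xi_2\in\Pi$; the corresponding translation has already been realized as a gauge transformation by $g_\mu\in\Omega G$, so every non-empty intersection of an $\Omega G$-orbit with $R$ is a full $\Pi$-orbit. The main subtlety, which motivates the specific form $\chi(s)\Ad_{\exp(-s\xi)}(\eta)$ of the perturbation, is arranging that $\Pi$-translation on $\t$ is implemented by a genuine loop gauge transformation; once that is secured, the remaining assertions reduce to the tubular neighborhood theorem for $T\subset G$.
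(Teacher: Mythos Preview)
The paper does not prove this proposition; it is quoted from \cite{LMSspinor}, with only the remark that the construction depends on a choice of ``splitting'' and the thickness~$\epsilon$. Your argument supplies an explicit self-contained construction and is correct. The key design choice---inserting the $\Ad_{\exp(-s\xi)}$ twist into the perturbation---is exactly what makes the $\Pi$-translation on $\t$ realizable by the based loop $g_\mu(s)=\exp(-s\mu)\in\Omega G$, and the holonomy computation reduces every remaining claim to the standard tubular neighborhood of $T\subset G$. One small logical clarification: the fact that $d(\hol\circ\Phi)$ is an isomorphism already gives injectivity of $d\Phi$ (so $\Phi$ is an immersion) and surjectivity of $d\hol|_{T_rR}$ in one stroke; together with your continuous-inverse argument for injectivity of $\Phi$, this yields the embedding and transversality simultaneously. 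Your bump function $\chi$ plays the role of the ``splitting'' alluded to in the paper's remark---it lifts $\t^\perp$ into $T_\xi L\g^*$ transverse to the orbit direction---and convex combinations of such $\chi$ again integrate to~$1$, consistent with that remark. Finally, the clause ``each $\Omega G$ orbit intersects $R$ non-trivially in an orbit of $\Pi$'' should be read as a statement about nonempty intersections (since $\hol(R)$ is only a tube around $T$), which is precisely how you treat it.
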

The construction is canonical up to homotopy, the only choices involved being a splitting (preserved under convex combinations) and a `thickness' $\epsilon$. For further discussion see also \cite{LIndexFormula}.

\begin{definition}[\cite{LMSspinor}]
\label{d:globtrans}
Let $R\subset L\g^*$ be as in Proposition \ref{p:trans}, and let $\ul{R}\subset L\ul{\g}^*$ be the product of $b$ copies of $R$. As $\mu_{\M_\Sigma}$ is $L\sul{G}$-equivariant, the inverse image
\[ X=\mu_{\M_\Sigma}^{-1}(\ul{R}) \]
is a finite dimensional $N(\sul{T})\ltimes \ul{\Pi}$-invariant submanifold (with boundary) of $\M_\Sigma$ that we will refer to as a \emph{global transversal} of $\M_\Sigma$. The interior of $X$ (resp. $R$) is denoted $X^\circ$ (resp. $R^\circ$). The moment map $\mu_{\M_\Sigma}$ restricts to a proper $N(\sul{T})\ltimes \ul{\Pi}$-equivariant map $\mu_X \colon X \rightarrow \ul{R}$.
\end{definition}

\subsection{Spinor bundles and Dirac operators}
A vector bundle $E\rightarrow \M_\Sigma$ is said to be at \emph{level} $\ell \in \bZ$ if it carries an action of $\wh{L\sul{G}}^{(\ell)}$ compatible with the action of $L\sul{G}$ on $\M_\Sigma$. Likewise we say that a vector bundle $E\rightarrow X$ is at \emph{level} $\ell$ if it is the restriction to $X$ of a vector bundle at level $\ell$ on $\M_\Sigma$. $\M_\Sigma$ has a canonical level $1$ prequantum line bundle
\begin{equation}
\label{e:prequant}
L\rightarrow \M_\Sigma,
\end{equation}
obtained by reduction of a prequantum line bundle for the Atiyah-Bott symplectic structure on $\A_\Sigma$. For details see \cite[Section 3.3]{MWVerlindeFactorization}, following \cite{Mickelsson, RamadasSingerWeitsman, Witten}.

A K-theory orientation for an even-dimensional manifold $Y$ is a Spin$_c$ structure, or equivalently a spinor bundle, i.e. a complex vector bundle $\scr{S}\rightarrow Y$ whose fibers are irreducible modules for the bundle of Clifford algebras $\bC l(TY,g_Y)$. The manifold $X$ has a canonical K-theory orientation.
\begin{theorem}[\cite{LMSspinor}]
\label{t:spinor}
There is a canonical spinor bundle $\scr{S}_{\tn{can}} \rightarrow X$, which is the restriction of an $\wh{L\sul{G}}^{(\hvee)}$-equivariant vector bundle on $\M_\Sigma$, where $\hvee$ is the dual Coxeter number of $G$.
\end{theorem}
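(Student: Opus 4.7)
\emph{Proof proposal.} The plan is to construct $\scr{S}_{\tn{can}}$ from the splitting of $T\M_\Sigma|_X$ combined with a loop-group-theoretic half-Pfaffian of the infinite-dimensional normal directions, then extend formally over $\M_\Sigma$.

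First, I would equip $T\M_\Sigma$ with its canonical $L\sul{G}$-invariant almost complex structure $J$, compatible with the Atiyah-Bott symplectic form. This structure arises from the Hodge decomposition of $\Omega^1(\Sigma,\g)$ determined by the complex structure on $\Sigma$, or equivalently from the K\"ahler structure on $\M_\Sigma$ viewed as a moduli space of framed holomorphic $G_\bC$-bundles. Second, along $X\subset \M_\Sigma$ there is a canonical splitting $T\M_\Sigma|_X = TX \oplus V$, where $V_x$ is the tangent space at $x$ to the $\Omega\sul{G}$-orbit through $x$. Via $\mu_{\M_\Sigma}$, $V_x$ identifies with a dense subspace of $T_{\mu_{\M_\Sigma}(x)}(\Omega\sul{G}\cdot \mu_{\M_\Sigma}(x))\subset L\ul{\g}^*$, which carries a canonical polarization $V_x^{0,1}$ by positive Fourier modes; this polarization is $N(\sul{T})\ltimes \ul{\Pi}$-invariant precisely because $R$ is transverse to the $\Omega\sul{G}$-orbits (Proposition \ref{p:trans}).

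Third, the Pressley-Segal construction attaches to $V$ (with its polarization) a canonical Fock-type spinor bundle $\scr{S}^V$ equipped with a projective $\wh{L\sul{G}}$-action. I would then define $\scr{S}_{\tn{can}}$ as the unique (up to canonical isomorphism) finite-rank spin-$c$ bundle on $X$ satisfying the formal identity
\[ \scr{S}^{\M_\Sigma}\big|_X \simeq \scr{S}_{\tn{can}}\hotimes \scr{S}^V, \]
where $\scr{S}^{\M_\Sigma}$ is the formal spinor bundle determined by $J$ on $\M_\Sigma$. The level is pinned down by the Schwinger cocycle for the adjoint representation: the full determinant line $\det V^{0,1}$ carries a central extension at level $2\hvee$ by the standard Pressley-Segal calculation, and the spin structure on $\Sigma$ (which induces the one on $\partial \Sigma$ used in Section \ref{s:familyAPS}) supplies a canonical square root, giving level $\hvee$.

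Finally, the extension to all of $\M_\Sigma$ follows because $J$ is globally defined and $L\sul{G}$-invariant, and the formal half-Pfaffian of $T\M_\Sigma$ can be realized as an $\wh{L\sul{G}}^{(\hvee)}$-equivariant object via loop group representation theory (positive energy representations at level $\hvee$); the formal tensor product then defines an equivariant object on $\M_\Sigma$ restricting to $\scr{S}_{\tn{can}}$ on $X$. The main obstacle is giving rigorous meaning to $\scr{S}^{\M_\Sigma}$ and to the cancellation on the right-hand side: this requires the Pressley-Segal formalism for the restricted general linear group together with a careful verification that the polarization of $V$ inherited from $L\ul{\g}^*$ matches, up to a finite-rank correction along $X$, the polarization of $T\M_\Sigma$ determined by $J$, so that the quotient $\scr{S}^{\M_\Sigma}|_X/\scr{S}^V$ is genuinely finite-rank.
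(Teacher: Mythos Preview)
The paper does not contain a proof of this theorem: it is quoted from \cite{LMSspinor} and stated without argument. The only commentary the paper offers is the sentence immediately following the statement, ``The dual Coxeter number appears because it is the level of the spin representation of the loop group,'' which is consistent with the mechanism you invoke in your third step.

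Since there is no proof in the paper to compare against, I can only remark on your sketch on its own terms. The broad strategy---split $T\M_\Sigma|_X$ into $TX$ and the tangent directions to the $\Omega\sul{G}$-orbits, build a Fock-type spinor for the infinite-dimensional piece using a polarization, and read off the level from the loop-group spin representation---is the natural one and matches the one-line explanation the paper gives. Two points deserve caution. First, your claim that the almost complex structure $J$ on $T\M_\Sigma$ is $L\sul{G}$-invariant is too strong in general; what one typically has is invariance up to Hilbert--Schmidt perturbation (the restricted sense of Pressley--Segal), which is exactly what is needed for the Fock construction but is weaker than strict invariance. Second, the ``formal cancellation'' defining $\scr{S}_{\tn{can}}$ is the crux, and your last paragraph correctly flags it as the main obstacle; making this rigorous is nontrivial and is presumably the substance of \cite{LMSspinor}. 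Your proposal is a plausible outline but not a proof, and any detailed comparison would require consulting the cited reference rather than the present paper.
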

The dual Coxeter number appears because it is the level of the spin representation of the loop group. Further spinor bundles are obtained by tensoring with $L$. Thus for any $k>-\hvee$ one has a canonical spinor bundle
\begin{equation} 
\label{e:levellspinor}
\scr{S}=\scr{S}_k=\scr{S}_{\tn{can}}\otimes L^k, \qquad \text{at level } \ell=k+\hvee>0.
\end{equation}
The corresponding \emph{anti-canonical line bundle} is the level $2\ell$ line bundle
\begin{equation}
\label{e:anticanon}
\scr{L}=\scr{L}_k=\Hom_{\bC l(TX)}(\scr{S}^*,\scr{S})=\scr{L}_{\tn{can}}\otimes L^{2k}.
\end{equation}

Choosing a complete invariant Riemannian metric $g$ on the interior $X^\circ$ and an invariant connection on $\scr{L}$ determines a connection on $\scr{S}$ compatible with the Levi-Civita connection. The corresponding Spin$_c$ Dirac operator $\Dirac_{\scr{S}}$ is defined as the composition
\[ C_c^\infty(X^\circ,\scr{S})\xrightarrow{\nabla}C_c^\infty(X^\circ,T^*X^\circ\otimes \scr{S})\xrightarrow{\c}C_c^\infty(X^\circ,\scr{S}).\]
Recall that $\ul{R}$ was a `thickening' of $\ul{\t}\subset L\ul{\g}^*$, and likewise we view $X^\circ$ is a `thickening' of the singular subset $\mu_{\M_\Sigma}^{-1}(\ul{\t})$; to account for this we introduce a new operator $\Dirac$ obtained by twisting $\Dirac_{\scr{S}}$ by the pullback under the composition $X^\circ \xrightarrow{\mu_{\M_\Sigma}} \ul{R}\rightarrow \ul{\t}^\perp$ of a cycle representing the K-theoretic Thom-Bott class 
\begin{equation}
\label{e:ThomBott}
\scr{B} \in K^0_{\sul{T}}(\ul{\t}^\perp).
\end{equation} 
The resulting operator can be thought of as representing the virtual fundamental class of $\mu_{\M_\Sigma}^{-1}(\ul{\t})$; it is a Dirac-type operator
\begin{equation} 
\label{e:Diractype}
\Dirac \colon C_c^\infty(X^\circ,S)\rightarrow C^\infty_c(X^\circ,S) 
\end{equation}
acting on sections of the $\bZ_2$-graded vector bundle
\begin{equation}
\label{e:Diractypespinor}
S=\scr{S}_{\ul{\t}^\perp}\wh{\boxtimes} \scr{S}, \qquad \scr{S}_{\ul{\t}^\perp}=\wedge \ul{\n}^- \otimes \bC_{\ul{\rho}},
\end{equation}
where $\scr{S}_{\ul{\t}^\perp}$ is the $\sul{T}$-equivariant $\bZ_2$-graded spin representation for $\Cl(\ul{\t}^\perp)$, $\n^-\subset \g_\bC$ is the direct sum of the negative root spaces and $\rho$ is the half sum of the positive roots. By Dirac-type, we mean a Dirac operator up to a $0$-th order term, and indeed $\Dirac$ incorporates a $0$-th order potential term whose square goes to infinity at $\partial X$. See \cite{LIndexFormula} for details.

\subsection{The index homomorphism}
If $Y$ is a compact even-dimensional Spin$_c$ manifold and $f\colon Y\rightarrow M$ is a smooth map, then there is a homomorphism
\[ \index \colon K(M)=K^0(M)\rightarrow \bZ \]
taking a K-theory class represented by a vector bundle $E$ to the index of the Fredholm operator obtained by coupling the Spin$_c$ Dirac operator on $Y$ to $f^*E$ with any connection. In case a compact Lie group acts compatible with all the data, there is a similar equivariant index homomorphism to the representation ring of the compact Lie group. 

The index homomorphism 
\begin{equation} 
\label{e:indexhom2}
\index^k_{\sul{T}}\colon K^\ad_{\sul{T}}(\M_\Sigma)\rightarrow R^{-\infty}(\sul{T})
\end{equation}
is analogous, with an important difference: it is defined using the \emph{non-compact} submanifold $X\hookrightarrow \M_\Sigma$ and Dirac-type operator $\Dirac$ built using the level $\ell=k+\hvee>0$ Spin$_c$ structure $\scr{S}_{\tn{can}}\otimes L^k$. Non-compactness of $X$ leads to several complications:
\begin{enumerate}
\item The homomorphism is only defined on an $R(\sul{T})$-subalgebra $K^{\ad}_{\sul{T}}(\M_\Sigma)\subset K_{\sul{T}}(\M_\Sigma)$ of admissible classes. These are K-classes represented by Fredholm complexes satisfying an equivariant bounded geometry condition guaranteeing that the index is well-defined. Moreover $[L] \notin K^{\ad}_{\sul{T}}(\M_\Sigma)$, which is why we have a sequence $\index^k_{\sul{T}}$ (with $k>-\hvee$) rather than a single index homomorphism.
\item The index takes values in formal Fourier series. This is because, even under the admissibility assumption, the $L^2$ kernel and cokernel of the operator will be infinite dimensional in general, although the multiplicity of each irreducible representation of $\sul{T}$ will be finite.
\item On a non-compact space $X$, it is not true that every K-theory class can be represented as a formal difference of vector bundles. This is also false for the more restricted collection of admissible classes. Therefore a more general construction is necessary, in which Fredholm complexes are coupled to $\Dirac$.
\item Rather than being essentially automatic as it is in the compact case, it requires an argument (based on the non-abelian localization formula) that the resulting Fredholm index only depends on the K-theory class, and so descends to a map \eqref{e:indexhom2}.
\end{enumerate}
The index homomorphism was constructed along these lines for general Hamiltonian loop group spaces in \cite{LIndexFormula}. To apply the general theory to the Atiyah-Bott classes, it remains to prove that they are admissible. The definition of admissibility in the special case of the classes $\sf{E}^\Sigma V$ will be reviewed in the course of the proof of their admissibility below.

Any class in $K_{\sul{G}}(\M_\Sigma)$ that is the pullback of a $\sul{G}$-equivariant class on $M_\Sigma=\M_\Sigma/\Omega \sul{G}$ is admissible. The Atiyah-Bott classes constructed from $N\subset \Sigma$ with $\dim(N)=0,1$ are such pullbacks, since the data defining these classes was in fact $L\sul{G}$-equivariant. Thus for these classes admissibility is immediate.

We shall thus focus on the class $\sf{E}^\Sigma V$ defined in Section \ref{s:ABSigma}. Recall this is the class represented by the pair $(\E,D)$, where $\E \rightarrow \M_\Sigma$ is a Hilbert bundle with fibers $L^2(\Sigma,S^\Sigma \boxtimes V)$ and $D=D_{q(\bullet)}$ is a family of elliptic boundary problems.

The Hilbert bundle $\E|_X \rightarrow X$ is $N(\sul{T})\ltimes \ul{\Pi}$-equivariant. To check admissibility, it will be useful to have a somewhat explicit description of an invariant Hermitian connection $\nabla^\E$ on $\E|_X$ built using Coulomb gauge charts (see Section \ref{s:modsp}). Choose a compact fundamental domain $X_\diamond \subset X$ for the $\ul{\Pi}$ action, and choose a finite number $i=1,...,N$ of Coulomb gauge charts whose restrictions $U_{i,\diamond}$ to $X$ cover $X_\diamond$. By making the charts smaller if necessary, we may assume that $\ul{\eta} \cdot U_{i,\diamond}$, $\ul{\eta}'\cdot U_{i,\diamond}$ are disjoint when $\ul{\eta} \ne \ul{\eta}'$, and moreover that the closures $\ol{U}_{i,\diamond}$ are still contained in the domain of some slightly larger Coulomb chart. Let
\[ U_i=\bigcup_{\ul{\eta} \in \ul{\Pi}} \ul{\eta} \cdot U_{i,\diamond}\simeq \ul{\Pi}\times U_{i,\diamond}.\]
Thus $\U=\{U_1,...,U_N\}$ is a finite open cover of $X$ by $\ul{\Pi}$-invariant subsets. The covering is obviously uniformly finite, in the sense that there is a $\delta>0$ such that for every $x \in X$, there is an $i \in \{1,...,N\}$ such that the ball of radius $\delta$ around $x$ is contained in $U_i$. Let us also fix a partition of unity $\{\rho_1,...,\rho_N\}$ subordinate to the cover, with each $\rho_i$ being $\ul{\Pi}$-invariant. 

The choice of a Coulomb gauge chart $U_{i,\diamond}$ determines a trivialization $\Phi_{i,\diamond}\colon \E\upharpoonright U_{i,\diamond}\rightarrow \E_0\times U_{i,\diamond}$, and hence a Hermitian connection $\nabla^{\E,U_{i,\diamond}}$. Since the bundle $\E|_X$ is $\ul{\Pi}$-equivariant, we may use the $\ul{\Pi}$ action to obtain a connection $\nabla^{\E,U_i}$ defined over all of $U_i=\ul{\Pi}\cdot U_{i,\diamond}$, and we define
\[ \nabla^{\E,\U}=\sum_{i=1}^m \rho_i \nabla^{\E,U_i}.\]
Equivalently the trivializations and partition of unity determine a $\ul{\Pi}$-equivariant embedding of $\E|_X$ into the trivial bundle $X \times (\E_0 \otimes \bC^N)$, and $\nabla^{\E,\U}$ is the connection associated to this embedding. To obtain an $N(\sul{T})\ltimes \ul{\Pi}$-invariant connection $\nabla^\E$, average $\nabla^{\E,\U}$ with the action of the compact group $N(\sul{T})$.

The Riemannian metric $g_X$ on $X$, the Hermitian structure on $\E$, and the connection $\nabla^{\E,\U}$ determine a Riemannian metric on the unit disk bundle $\B \subset \E|_X$. The torus $\sul{T}$ acts on $\B$. Let $\ul{\xi}_\B$ be the vector field on $\B$ induced by $\ul{\xi} \in \ul{\t}$.
\begin{lemma}
\label{l:ETfinite}
The vector field $\ul{\xi}_\B$ has bounded norm.
\end{lemma}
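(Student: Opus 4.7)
The plan is to decompose $\ul{\xi}_\B$ at each point $(x,v)\in \B$ into horizontal and vertical components with respect to the connection $\nabla^\E$ and bound each separately, using $\ul{\Pi}$-invariance to reduce to compactness of the fundamental domain $X_\diamond$. Writing the infinitesimal $\sul{T}$-action on sections of $\E|_X$ as $L_{\ul{\xi}}=\nabla^\E_{\ul{\xi}_X}+\mu_\E(\ul{\xi})$ with $\mu_\E(\ul{\xi})\in \Gamma(\End(\E|_X))$, the Sasaki-type Riemannian structure on $\B$ gives
\[ |\ul{\xi}_\B(x,v)|^2_\B =|\ul{\xi}_X(x)|^2_{g_X}+|\mu_\E(\ul{\xi})(x)v|^2_{\E_x}, \qquad |v|\le 1. \]
Since $\sul{T}\subset L\sul{G}$ sits as constant loops while $\ul{\Pi}\subset L\sul{G}$ sits as exponential loops, the two subgroups commute in $L\sul{G}$; in particular the $\sul{T}$- and $\ul{\Pi}$-actions on $\M_\Sigma$ and on $\E$ commute. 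Combined with the explicit $N(\sul{T})\ltimes \ul{\Pi}$-invariance of $g_X$ and $\nabla^\E$ by construction, this makes the two functions $x\mapsto |\ul{\xi}_X(x)|_{g_X}$ and $x\mapsto \|\mu_\E(\ul{\xi})(x)\|_{\tn{op}}$ on $X$ both $\ul{\Pi}$-invariant.

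For the horizontal term, continuity and $\ul{\Pi}$-invariance of $|\ul{\xi}_X|_{g_X}$ together with compactness of $X_\diamond$ give a uniform bound on $X=\ul{\Pi}\cdot X_\diamond$. For the vertical term, it suffices to bound $\|\mu_\E(\ul{\xi})(x)\|_{\tn{op}}$ on each $\ol{U}_{i,\diamond}$, $i=1,\ldots,N$. Using the Coulomb trivialization $\Phi_{i,\diamond}\colon \E|_{U_{i,\diamond}}\xrightarrow{\sim} \E_0\times U_{i,\diamond}$, the fiberwise $\sul{T}$-action is realized by composing $\pi_V$ with a smooth lift $\sul{T}\times U_{i,\diamond}\to \G_\Sigma$: pick any fixed cutoff-extension of $\ul{t}\in \sul{G}$ to a gauge transformation on $\Sigma$, then apply the Coulomb-correction $\gamma\in\G_{\Sigma,\partial\Sigma}$ furnished by Proposition \ref{p:charts}. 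Differentiating at $\ul{t}=1$, $\mu_\E(\ul{\xi})(x)$ acts on $\E_0=L^2(\Sigma,S^\Sigma\boxtimes V)$ as pointwise multiplication by $\pi_V\circ \phi_x$ for a smooth $\g$-valued function $\phi_x$ on the compact manifold $\Sigma$, depending smoothly on $x$. Since $V$ is finite-dimensional and $\Sigma$ is compact, such a pointwise multiplication is a bounded operator with norm continuous in $x$. Taking the maximum over the finitely many charts (the partition-of-unity sum $\nabla^{\E,\U}=\sum_i \rho_i\nabla^{\E,U_i}$ and the subsequent $N(\sul{T})$-averaging preserve such fiberwise operator norm bounds up to a constant) yields the required uniform bound on $X_\diamond$, which extends to $X$ by $\ul{\Pi}$-invariance.

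The only nontrivial point is verifying that the Coulomb-correction $\gamma=\gamma(x,\ul{t})$ depends smoothly on both $x \in U_{i,\diamond}$ and the group parameter $\ul{t}$, so that differentiation at $\ul{t}=1$ genuinely produces a smooth compactly-supported $\g$-valued function on $\Sigma$; this smoothness is already built into Proposition \ref{p:charts} via the Banach implicit function theorem. Once this is granted, finite-dimensionality of $V$ and compactness of $\Sigma$ immediately yield the desired operator norm bound, and combining with the horizontal bound completes the proof.
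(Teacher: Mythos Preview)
Your proof is correct and follows essentially the same strategy as the paper: split $\ul{\xi}_\B$ into horizontal and vertical parts, use commutation of the $\sul{T}$- and $\ul{\Pi}$-actions together with $\ul{\Pi}$-invariance of the metric and connection to reduce to the compact fundamental domain, and then identify the vertical component in a Coulomb trivialization as pointwise multiplication by $\pi_V$ of a continuous $\g$-valued function on the compact surface $\Sigma$. The paper writes this function explicitly as $\bm{\xi}'=\bm{\xi}-\bm{\zeta}$, where $\bm{\xi}$ is a fixed extension of $\ul{\xi}$ and $\bm{\zeta}$ is the infinitesimal Coulomb correction; your moment-map notation $\mu_\E(\ul{\xi})$ packages the same object.

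One small point to tidy: the Riemannian metric on $\B$ is constructed in the paper from $\nabla^{\E,\U}$, not from the $N(\sul{T})$-averaged connection $\nabla^\E$, so your displayed Sasaki norm formula should use $\nabla^{\E,\U}$ for the horizontal/vertical split (or, equivalently, note that $\nabla^\E-\nabla^{\E,\U}$ is a bounded $\End(\E)$-valued $1$-form since it arises from averaging over the compact group $N(\sul{T})$, so the two splittings differ by a bounded term). This does not affect the substance of the argument.
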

\begin{proof}
The horizontal component (relative to $\nabla^{\E,\U}$) of $\ul{\xi}_\B$ is the horizontal lift of $\ul{\xi}_X$, which clearly has bounded norm (it is $\ul{\Pi}$-invariant). By $\ul{\Pi}$-invariance of the norms and connections, together with the fact that the $\sul{T}$, $\ul{\Pi}$ actions commute, it is enough to check that the $\nabla^{\E,U_{i,\diamond}}$-vertical component of $\ul{\xi}_\B$ has bounded norm over each of the compact closures $\ol{U}_{i,\diamond} \subset X$, $i=1,...,N$ (recall we assumed that the closures $\ol{U}_{i,\diamond}$ is contained in some slightly larger Coulomb chart, so $\nabla^{\E,U_{i,\diamond}}$ extends over this slightly larger subset).

The unit ball bundle $\B=(\A_\Sigma^\flat\times \B_0)/\G_{\Sigma,\partial \Sigma}$, where $\B_0\subset \E_0$ is the unit ball. The vector field $\ul{\xi}_\B$ can be described as follows: let $\bm{\xi}$ be any smooth $\g$-valued function on $\Sigma$ whose restriction to the $j$-th boundary component $\partial_j\Sigma$ of $\Sigma$ is $\xi^j \in \t$. Then $\bm{\xi}$ induces a vector field $(\bm{\xi}_{\A_\Sigma^\flat},\bm{\xi}_{\B_0})$ on $\A_\Sigma^\flat\times \B_0$ which descends to $\ul{\xi}_\B$. The Coulomb gauge condition uniquely determines a continuous (because $\varsigma+\frac{1}{2}>1$) map $\bm{\zeta} \colon \ol{U}_{i,\diamond}\times \Sigma \rightarrow \g$ such that the (partially-defined) vector field $\bm{\xi}'_{\A_\Sigma^\flat}$ is tangent to the Coulomb slice, where $\bm{\xi}'\colon \ol{U}_{i,\diamond}\times \Sigma\rightarrow \g$ is the map $\bm{\xi}'(x,p)=\bm{\xi}(p)-\bm{\zeta}(x,p)$, for $x \in \ol{U}_{i,\diamond}$, $p \in \Sigma$. The norm of the $\nabla^{\E,U_{i,\diamond}}$-vertical component of $\ul{\xi}_\B$ over $\B\upharpoonright \ol{U}_{i,\diamond}$ is bounded by
\[ \sup_{(x,p)}|\pi_V(\bm{\xi}'(x,p))|_{\End(V)},\]
where the supremum is taken over $(x,p)\in \ol{U}_{i,\diamond}\times \Sigma$. This is finite because $\bm{\xi}' \colon \ol{U}_{i,\diamond}\times \Sigma \rightarrow \g$ is bounded and $(V,\pi_V)$ is a finite dimensional representation.
\end{proof}

\begin{theorem}
\label{t:quasiperiodicity}
The Atiyah-Bott class $\sf{E}^\Sigma V$ is admissible.
\end{theorem}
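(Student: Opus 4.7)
The strategy is to verify that the pair $(\E|_X, F_{q(\bullet)}|_X)$ representing the restriction $\sf{E}^\Sigma V\upharpoonright X$ satisfies the equivariant bounded-geometry conditions that define admissibility in \cite{LIndexFormula}. For a single Fredholm family this amounts to: (a) a $\sul{T}\ltimes\ul{\Pi}$-invariant Hermitian connection $\nabla^\E$ on $\E|_X$ with uniform bounds on its curvature and its Christoffel symbols in a fixed system of trivializations; (b) uniform norm-bounds on the bounded transform $F_{q(\bullet)}$ and on its $\nabla^\E$-covariant derivatives as one ranges over the non-compact $X$; and (c) an $L^\infty$ bound on the vertical component of the infinitesimal $\sul{T}$-action on the unit disk bundle $\B$. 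Condition (c) is exactly Lemma \ref{l:ETfinite}, so the argument reduces to (a) and (b).

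For (a), I would exploit the construction of $\nabla^\E$ itself. The finite cover $\U=\{U_i\}$ and the partition of unity $\{\rho_i\}$ are $\ul{\Pi}$-invariant and come from finitely many trivializations $\Phi_{i,\diamond}$ over the compact closures $\ol{U}_{i,\diamond}\subset X_\diamond$. Smoothness on each compact closure yields bounds on all covariant derivatives of the connection matrices for $\nabla^{\E,U_{i,\diamond}}$, and the $\ul{\Pi}$-action transports these bounds uniformly over $U_i=\ul{\Pi}\cdot U_{i,\diamond}$. Summing against the partition of unity and then averaging over the finite group $N(\sul{T})$ preserves the uniform estimates, giving uniform control of the curvature of $\nabla^\E$.

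For (b), I would work chart-by-chart. In the trivialization $\Phi_{i,\diamond}$ the family $D_A=(\DiracSP_A,B_{<0}(\partial))$ has $A$-independent principal symbol and a fixed APS boundary projector, with $A$-dependence concentrated in the zeroth-order coupling $(\c^\Sigma\boxtimes\pi_V)(A)$. The Sobolev embedding $L^{2,\varsigma-\frac{1}{2}}(\Sigma,\g)\hookrightarrow L^\infty(\Sigma,\g)$ (valid because $\varsigma-\tfrac{1}{2}>1$) applied on the compact slice $\ol{U}_{i,\diamond}$ bounds this coupling and its variations in the operator norm on $\E_0$, uniformly over $\ol{U}_{i,\diamond}$. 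The general theory of families of APS-type boundary problems (cf.\ \cite{BarBallmann, BarBallmannGuide}) then upgrades this to uniform estimates on $F_A$ and its covariant derivatives on $\ol{U}_{i,\diamond}$. Translating by $\ul{\Pi}$ extends the estimates to all of $U_i$, and the $\{\rho_i\}$ globalize them to $X$.

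The main obstacle is obtaining these uniform estimates across the non-compact $X$: in general a continuous Fredholm family need not extend to one of bounded geometry. The Coulomb-gauge framework, the $\ul{\Pi}$-equivariance of the cover, and the Sobolev regularity $\varsigma>\tfrac{3}{2}$ were arranged precisely so that the family is, modulo a fixed boundary projector and an $A$-independent symbol, a pointwise multiplication by a uniformly bounded potential pulled from a compact fundamental domain. Once (a), (b), and Lemma \ref{l:ETfinite} are in place, the definition of admissibility is satisfied and the theorem follows.
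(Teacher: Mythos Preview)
Your strategy is the same as the paper's: exploit the $\ul{\Pi}$-periodic Coulomb-gauge atlas $\{U_i\}$, obtain uniform estimates on a compact fundamental domain $\ol{U}_{i,\diamond}$ via the Sobolev embedding $L^{2,\varsigma-\frac12}\hookrightarrow L^\infty$, and then propagate to all of $X$ by $\ul{\Pi}$-equivariance. Lemma~\ref{l:ETfinite} supplies your condition~(c).

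There are, however, two points where your formulation diverges from the paper's and where the argument can be sharpened. First, the admissibility conditions used in \cite{LIndexFormula} (as the paper invokes them) are phrased in terms of the \emph{unbounded} family $D$, not the bounded transform $F$: one needs (i) compact resolvent of each $D_x$, (ii) Lemma~\ref{l:ETfinite}, (iii) a mild smoothness hypothesis, and (iv) that $\nabla^\E$ preserves $\dom(D_{q(\bullet)})$ and that $\nabla^\E D$ is a bounded section of $T^*X\otimes\scr{B}(\E)$. You do not mention (i) or the domain-preservation part of (iv); both are genuine hypotheses that must be checked (the first follows from ellipticity of the APS problem on compact $\Sigma$, the second from the fact that the boundary projector $B_{<0}(\partial)$ is fixed in the Coulomb trivialization and is carried into itself under the $\ul{\Pi}$-translates, which are themselves Coulomb charts).

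Second, working with $D$ rather than $F$ avoids your appeal to ``general theory of families of APS-type boundary problems'' altogether. In the trivialization over $U_\diamond$, the family $D_{\ul{\eta},A}$ (the $\ul{\eta}$-translate pulled back to $U_\diamond$) has $A$-dependence only through the zeroth-order term, so one computes directly
\[
\nabla^{\E,U_\diamond}_{Tq(B)}D_{\ul{\eta},q(\bullet)}=(\c^\Sigma\boxtimes\pi_V)(B),
\]
which is bounded in operator norm by $\|B\|_\infty\le C|Tq(B)|$, with a constant \emph{independent of $\ul{\eta}$}. This one-line computation replaces the black-box step in your (b), and the $\ul{\eta}$-independence is exactly the uniformity you need across the lattice translates.
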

\begin{proof}
We will prove that $(\E,D)$ is an admissible cycle for $\sf{E}^\Sigma V$ in the sense of \cite{LIndexFormula}. There are four properties to check. The first property is that $(1+D_x^2)^{-1}$ be compact for each $x \in X$, which holds because $D_x$ is an elliptic boundary value problem on the compact manifold $\Sigma$. The second property is Lemma \ref{l:ETfinite}. The third property is a very mild smoothness condition on the family $D_x, x \in X$ (we refer the reader to \cite{LIndexFormula} for the precise definition) and is immediate in this case. The last property is that $\nabla^\E$ preserves the domain subbundle $\dom(D_{q(\bullet)}) \subset \E$, and that the covariant derivative $\nabla^\E D$ is a smooth bounded section of $T^*X\otimes \scr{B}(\E)$, where $\scr{B}(\E)$ denotes the bundle of bounded endomorphisms of the fibers.

Each of the Coulomb gauge connections $\nabla^{\E,U_{i,\diamond}}$ preserves the domain of $D_{q(\bullet)}$ since the boundary condition is constant. The extension of $\nabla^{\E,U_{i,\diamond}}$ to a connection $\nabla^{\E,U_i}$ over $U_i=\ul{\Pi}\cdot U_{i,\diamond}$ using the $\ul{\Pi}$ action also has this property, because the image of a Coulomb chart under the action of an element of the gauge group is again a Coulomb chart. Since $\nabla^\E$ is built from the $\nabla^{\E,U_i}$ by patching together with a partition of unity and then averaging over $N(\sul{T})$, $\nabla^\E$ parallel translation also preserves $\dom(D_{q(\bullet)})$.

Next we argue that the $\nabla^\E$-covariant derivative of $D$ is a bounded section of $T^*X\otimes \scr{B}(\E)$. By the construction of $\nabla^\E$, it is enough to prove instead that the $\nabla^{\E,U_i}$-covariant derivative of $D|_{U_i}$ is a bounded section of $T^*U_i\otimes \scr{B}(\E|_{U_i})$, for each $i=1,...,N$ ($\rho_1,...,\rho_N$ are periodic so have globally bounded gradient). Fix $i$ and let $U_\diamond=U_{i,\diamond}$, $U=U_i\simeq \ul{\Pi} \times U_\diamond$. Recall that $U_\diamond \subset X$ is the restriction of a Coulomb gauge chart for some connection $A_\diamond$ on $\Sigma$. Let $\psi \colon U_{\diamond} \rightarrow \A_\Sigma^\flat$ be the smooth local section of the bundle $\A_\Sigma^\flat\rightarrow \M$ over $U_{\diamond}$ that is determined by the chart. By smoothness of $\psi$, and since $U_{\diamond}$ is relatively compact, there is an estimate of the form
\[ \|B\|_{\varsigma-\frac{1}{2}}\le C|Tq(B)| \]
for $B \in T\psi(U_{\diamond})$, where $Tq$ is the tangent map and $|Tq(B)|$ is computed using the Riemannian metric $g_X$. Since $\varsigma>\frac{3}{2}$, Sobolev embedding yields an estimate
\begin{equation} 
\label{e:supnormest}
\|B\|_{\infty} \le C|Tq(B)|,
\end{equation}
for $B \in T\psi(U_{\diamond})$.

For each $\ul{\eta} \in \ul{\Pi}$, let $D_{\ul{\eta},q(\bullet)}$ denote the family of operators $\ul{\eta}^{-1}\circ (D_{q(\bullet)}\upharpoonright \ul{\eta}\cdot U_{\diamond}) \circ \ul{\eta}$ on $\E\upharpoonright U_{\diamond}$ obtained by restricting the family $D_{q(\bullet)}$ to $\ul{\eta} \cdot U_{\diamond}$ and pulling back to $U_{\diamond}$ using the $\ul{\Pi}$-action on $\E$. By construction of $\nabla^{\E,U}$, boundedness of the derivative of $D_{q(\bullet)}$ over $U$ amounts to showing that $\nabla^{\E,U_{\diamond}}D_{\ul{\eta},q(\bullet)}$ is a bounded operator for each $\ul{\eta}$, and moreover the norm is bounded by a constant that is independent of $\ul{\eta}$.

By Proposition \ref{p:gaugetransfamily}, the family $D_{\ul{\eta},q(\bullet)}$ over $U_{\diamond}$ is obtained by descending a family $D_{\ul{\eta},\bullet}$ along the fibers of $q \colon \A_\Sigma^\flat\rightarrow \M_\Sigma$ (over $U_{\diamond}$), where $D_{\ul{\eta},A}$ is the odd self-adjoint operator constructed from the family $(\DiracSP_A,B_{<0}(\partial_{\ul{\eta}}))$ as in Definition \ref{d:thefamily}. Fix $A \in \psi(U_{\diamond})$, and let $B \in T_A \psi(U_{\diamond})$. Then by definition
\begin{equation} 
\label{e:derivative1}
\nabla^{\E,U_\diamond}_{Tq(B)}D_{\ul{\eta},q(\bullet)}=\lim_{t\rightarrow 0}t^{-1}(D_{\ul{\eta},A+tB}-D_{\ul{\eta},A})=(\c^\Sigma\boxtimes \pi_V)(B),
\end{equation}
a bounded operator, with operator norm at most $\|B\|_\infty$ independent of $\ul{\eta}$. We conclude that the derivative satisfies
\[ \|\nabla^{\E,U_\diamond}_{Tq(B)}D_{\ul{\eta},q(\bullet)}\|\le \|B\|_\infty\le C|Tq(B)|, \]
where the second inequality is \eqref{e:supnormest}. This verifies the required boundedness property. The smoothness of the $\nabla^\E$-covariant derivative is clear from \eqref{e:derivative1}, and this completes the proof.
\end{proof}

The connection $\nabla^\E$ is used to extend $\Dirac$ to an operator $\Dirac_{\E}$ acting on sections of $S \wh{\otimes} \E|_X$. The operator obtained by \emph{coupling $\Dirac$ to $(\E,D)$ with $\nabla^\E$} is then by definition
\begin{equation} 
\label{e:coupled}
\Dirac_{\E}+1\wh{\otimes} D|_X,
\end{equation}
with initial domain consisting of smooth compactly supported sections of $S\wh{\otimes}\dom(D|_X)$. Admissibility (Theorem \ref{t:quasiperiodicity}) and the results of \cite{LIndexFormula} imply that the $\sul{T}$-equivariant $L^2$ index of \eqref{e:coupled} is well-defined in $R^{-\infty}(\sul{T})$, and this is the definition of the index homomorphism for $\sf{E}^\Sigma V$. As mentioned above, it is a consequence of the non-abelian localization formula proved in \cite{LIndexFormula} that this index only depends on the K-theory class $\sf{E}^\Sigma V$.

By Proposition \ref{p:affineWeylActionKthy}, the class $\sf{E}^\Sigma V$ is unchanged under the natural action of the Weyl group $\sul{W}$ on $K_{\sul{T}}(\M_\Sigma)$. The same is true of the other Atiyah-Bott classes. On the other hand, the Bott-Thom class built into the definition of $\Dirac$ is antisymmetric under the action of the Weyl group. As a result the index of an Atiyah-Bott class is antisymmetric under the action of the Weyl group \cite{LIndexFormula}. 

Let $J=\sum_{w \in W}(-1)^{l(w)}e^{w\rho}$ be the Weyl denominator and let $V_\lambda$ be the irreducible representation of $G$ with highest weight $\lambda \in \Lambda_+$. There is an isomorphism $R^{-\infty}(G)\xrightarrow{\sim} R^{-\infty}(T)^{W-\tn{anti}}$ given by
\[ \sum_{\lambda \in \Lambda_+}n_\lambda \Tr_{V_\lambda}\mapsto \sum_{\lambda \in \Lambda_+}n_\lambda J\cdot \Tr_{V_\lambda}|_T=\sum_{\lambda \in \Lambda_+}c_\lambda \sum_{w \in W}(-1)^{l(w)}e^{w(\lambda+\rho)}. \]
Let $I^G_T$ be the inverse map. Then $I^G_T$ sends a Weyl numerator to the corresponding irreducible character. Equivalently $I^G_T$ is $|W|^{-1}$ times the Dirac induction map. We extend the definition and notation in the obvious way to the products $\sul{G}=G^b$ and $\sul{T}=T^b$. Formally, we define a $\sul{G}$-equivariant index map
\begin{equation} 
\label{e:GequivIndex}
\index^k_{\sul{G}}=I^{\sul{G}}_{\sul{T}}\circ \index^k_{\sul{T}} \colon K^\ad_{\sul{G}}(\M_\Sigma)\rightarrow R^{-\infty}(\sul{G}).
\end{equation}
\begin{corollary}
The $R(\sul{G})$-subalgebra of $K_{\sul{G}}(\M_\Sigma)=K^0_{\sul{G}}(\M_\Sigma)$ generated by the Atiyah-Bott classes $\sf{E}^NV$, $\dim(N)=0,1,2$ is contained in $K^\ad_{\sul{G}}(\M_\Sigma)$. Therefore for any $\sf{E}$ in this subalgebra and for any $k>-\hvee$ the index
\[ \index^k_{\sul{G}}(\sf{E})\in R^{-\infty}(\sul{G}) \]
is well-defined.
\end{corollary}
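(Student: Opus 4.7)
The plan is to reduce this corollary to two ingredients: (i) admissibility of each individual generator $\sf{E}^NV$ for $\dim(N)=0,1,2$, and (ii) the fact, established in \cite{LIndexFormula}, that $K^\ad_{\sul{G}}(\M_\Sigma)$ is an $R(\sul{G})$-subalgebra of $K_{\sul{G}}(\M_\Sigma)$, i.e. it is closed under sums, scalar multiplication by $R(\sul{G})$, and products. Granting (i) and (ii), any polynomial expression in the $\sf{E}^NV$ with coefficients in $R(\sul{G})$ lies in $K^\ad_{\sul{G}}(\M_\Sigma)$, so the index homomorphism \eqref{e:GequivIndex} may be applied, proving the corollary.

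For step (i), the two lower-dimensional cases are essentially free. When $\dim(N)=0$, the point $N$ may be moved to $\partial \Sigma$, and the resulting class is represented by the trivial Hilbert bundle $\M_\Sigma \times V$; this is pulled back from a point and is admissible as noted in the paragraph preceding the proof of Theorem \ref{t:quasiperiodicity}. When $\dim(N)$ is a circle $C$, the family $\DiracC_{q(\bullet)}$ and the Hilbert bundle $\E_C$ are descended from objects that are equivariant for the full loop group $L\sul{G}$, hence the class $\sf{E}^CV$ is pulled back from $M_\Sigma=\M_\Sigma/\Omega\sul{G}$, which is a compact $\sul{G}$-manifold; again by the general admissibility criterion (any class pulled back from a compact quotient satisfies the equivariant bounded geometry condition automatically) this class is admissible. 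Products of such $\dim(N)=1$ classes are likewise pulled back from $M_\Sigma$, so even before invoking (ii) one already sees the whole sub-ring generated by $\dim(N)=0,1$ classes sits in $K^\ad_{\sul{G}}(\M_\Sigma)$.

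The genuinely non-trivial case $\dim(N)=2$, i.e.\ $\sf{E}^\Sigma V$, is exactly the content of Theorem \ref{t:quasiperiodicity}: its proof verifies the four conditions of admissibility, the key geometric point being the uniform estimate \eqref{e:supnormest} coming from the Coulomb gauge charts together with Sobolev embedding, which is used to bound the covariant derivative $\nabla^\E D$ uniformly over $X$ (crucially, uniformly over the $\ul{\Pi}$-translates of a fundamental domain $X_\diamond$).

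Step (ii), the subalgebra property, is the only thing not already available in the excerpt. I would cite this directly from \cite{LIndexFormula}: there the notion of admissibility is defined on Fredholm complexes, and the product of two admissible cycles $(\E_1,D_1)\wh{\otimes}(\E_2,D_2)$ is shown to satisfy the bounded geometry condition whenever both factors do (the connection is tensor-product, and the covariant derivative of the product operator is bounded once each individual derivative is). Module operations by $R(\sul{G})=K^0_{\sul{G}}(\pt)$ are trivially admissible. The main thing to verify is that this general closure statement really applies verbatim to the cycles built above — namely the tautological Fredholm complexes $(\E,D_{q(\bullet)})$ of Definition \ref{d:opfam}, the families $\DiracC_{q(\bullet)}$, and the trivial bundles for $\dim(N)=0$. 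Since each of these is either a bounded family coming from a finite-dimensional vector bundle or a Dirac-type family whose principal symbol is fixed and whose zero-th-order coefficients are controlled by Coulomb gauge estimates as in Theorem \ref{t:quasiperiodicity}, the subalgebra closure property applies without modification. The only mildly delicate step, and what I would flag as the main obstacle, is checking that the product cycle representing $\sf{E}^\Sigma V\cdot \sf{E}^\Sigma V'$ still satisfies the covariant-derivative bound: this follows because each factor's covariant derivative is bounded by Theorem \ref{t:quasiperiodicity} (using the explicit formula \eqref{e:derivative1}), and derivatives of a tensor product family are sums of such bounded terms.
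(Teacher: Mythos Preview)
Your proposal is correct and follows essentially the same approach as the paper, which treats the corollary as an immediate consequence of the preceding material: the paper explicitly notes (just before Theorem~\ref{t:quasiperiodicity}) that the $\dim(N)=0,1$ classes are pulled back from the compact holonomy manifold $M_\Sigma$ and hence admissible, Theorem~\ref{t:quasiperiodicity} handles $\dim(N)=2$, and the subalgebra property of $K^\ad_{\sul{G}}(\M_\Sigma)$ is taken as input from \cite{LIndexFormula} (it is stated at the start of Section~4). Your additional remarks on why the tensor-product closure applies to these particular cycles are more detailed than anything the paper spells out, but they are consistent with what is needed and do not constitute a different route.
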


\section{The Chern character and the Kirillov-Berline-Vergne formula}
In this section we introduce a different description of the K-theory class $\sf{E}^\Sigma V$ based on gluing disks to each component of the boundary $\partial \Sigma$ to obtain a closed surface $\bSigma$. We then construct a closed equivariant differential form representing the equivariant Chern character of $\sf{E}^\Sigma V$. This is used to prove a Kirillov-Berline-Vergne formula for the index.

\subsection{$\sf{E}^\Sigma V$ revisited}
For the purpose of this section, it is convenient to work with the presentation $\M_\Sigma=\A_{\Sigma,\C}^\flat/\G_{\Sigma,\C}$ mentioned briefly in Section \ref{s:modsp} and reviewed here; this presentation leads, along the same lines as Section \ref{s:familyAPS}, to a canonically isomorphic family of Dirac operators parametrized by $\M_\Sigma$, hence nothing is lost in passing between presentations.

Let $\C\simeq \partial \Sigma \times [1,2]$ denote a collar neighborhood of the boundary, where the boundary sits at $r=1$ in terms of the coordinate $r \in [1,2]$. Recall that the boundary $\partial \Sigma$ has $b\ge 1$ parametrized components, denoted $\partial_1 \Sigma,...,\partial_b \Sigma$. Let $\C_j\simeq \partial_j \Sigma \times [1,2]$ denote the $j$-th component of the collar. Let $r_j=r|_{\C_j}$. Let $s_j \colon \partial_j \Sigma \xrightarrow{\sim} S^1=\bR/\bZ$ denote the parameterization of $\partial_j \Sigma$, and recall $(\partial_{r_j},\partial_{s_j})$ is an oriented frame along $\partial_j\Sigma$ since $\partial_{r_j}$ is interior-pointing. Denote the pullback of $A \in \A_\Sigma^\flat$ to $\partial_j \Sigma$ by
\begin{equation}
\label{e:defAj}
A_j(s_j)\d s_j,
\end{equation}
where $A_j \in \Omega^0_{\varsigma-1}(S^1,\g)$. 

The subspace $\A_{\Sigma,\C}^\flat \subset \A_\Sigma^\flat$ denotes flat connections $A$ on $\Sigma$ such that for $j=1,...,b$,
\begin{equation} 
\label{e:AflC}
A|_{\C_j}=A_j(s_j)\d s_j.
\end{equation}
In other words $A|_{\C_j}$ has no component normal to $\partial_j \Sigma$, and the coefficient of $\d s_j$ only depends on $s_j$. The subgroup $\G_{\Sigma,\C}\subset \G_{\Sigma,\partial \Sigma}$ denotes gauge transformations $g$ such that $g|_{\C}=1$. As mentioned in Section \ref{s:modsp}, the loop group space $\M_\Sigma$ is canonically isomorphic to the quotient $\A_{\Sigma,\C}^\flat/\G_{\Sigma,\C}$.

Let $\bD$ denote the unit disk in $\bR^2$. Let $\bD_1,...,\bD_b$ denote $b$ copies of $\bD$, with polar coordinates $(r_j,s_j) \in [0,1]\times \bR/\bZ$ on the $j$-th copy. Define the closed surface $\bSigma$ by capping off the boundary:
\begin{equation} 
\label{e:bsigma}
\bSigma=\Sigma \cup_{\partial \Sigma} \bigcup_{j=1}^b \bD_j.
\end{equation}
The coordinates $r_j$, $s_j$ are thus defined on the subset $\C_j \cup_{\partial_j \Sigma} \bD_j$ of $\bSigma$.

Elements of $\A_{\Sigma,\C}^\flat$ can be extended to non-flat connections on $\bSigma$, such that smooth connections are extended to smooth connections, as follows. Fix a bump function $\chi(r)$ equal to $1$ for $r>\frac{2}{3}$ and equal to $0$ for $r<\frac{1}{3}$. For $A \in \A_{\Sigma,\C}^\flat$, let $\mathbb{A}$ be the connection obtained by extending $A$ constantly over each punctured disk $\bD_j\backslash \{0\}$ and multiplying by $\chi(r_j)$ in order that the product extends by $0$ to $\bD_j$:
\begin{equation} 
\label{e:extension}
\mathbb{A}|_{\C_j\cup \bD_j}(r_j,s_j)=A_j(s_j)\chi(r_j)\d s_j, \qquad r_j \in [0,2], \quad s_j \in \partial_j \Sigma\simeq S^1.
\end{equation}
Let $\G_{\Sigma,\C-lc} \subset \G$ be the subgroup consisting of gauge transformations that are locally constant on $\C$. Elements of $\G_{\Sigma,\C-lc}$ have a canonical extension to $\bSigma$ constant on each of the caps. In particular elements of $\G_{\Sigma,\C} \subset \G_{\Sigma,\C-lc}$ are extended to $\bSigma$ by the identity.

Extend the Riemannian metric and spinor module to $\bSigma$. Let $\DiracbS_\bA$ denote the Dirac operator acting in $L^2(\bSigma,S^{\bSigma}\boxtimes V)$ constructed using the connection $\bA$. The family $\DiracbSP_\bullet=(\DiracbSP_\bA)_{A \in \A_{\Sigma,\C}^\flat}$ is $\G_{\Sigma,\C-lc}$-equivariant, hence descends to a $\G_{\Sigma,\C-lc}/\G_{\Sigma,\C}\simeq \sul{G}$-equivariant family of Fredholm operators $\DiracbSP_{q(\bullet)}$ over $\M_\Sigma$.

\begin{proposition}
\label{p:cappedclass}
The class in $K_{\sul{G}}(\M_\Sigma)$ defined by the family $\DiracbSP_{q(\bullet)}$ is $\sf{E}^\Sigma V$.
\end{proposition}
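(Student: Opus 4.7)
The strategy is the classical Atiyah--Patodi--Singer gluing, carried out at the level of a Fredholm family parameterized by $\A_{\Sigma,\C}^\flat$ (rather than just at the level of indices). I would construct an explicit $\G_{\Sigma,\C-lc}$-equivariant Fredholm homotopy connecting $\DiracbSP_\bullet$ to the direct sum of the APS family $D^+_\bullet$ (representing $\sf{E}^\Sigma V$, cf. Definition~\ref{d:thefamily}) and a trivial cap family, then descend to $\M_\Sigma$.

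First, I would insert a cylindrical neck $[0,T]\times \partial \Sigma$ between $\Sigma$ and the caps $\bD_j$ in \eqref{e:bsigma}, and extend $\bA$ to the neck as $A_j(s_j)\,\d s_j$ (constant in the $T$-direction, matching \eqref{e:AflC}). This yields a continuous family of Fredholm operators $\DiracbSP_{\bA,T}$ parameterized by $\A_{\Sigma,\C}^\flat\times [0,\infty)$, compatible with the full gauge action of $\G_{\Sigma,\C-lc}$. The invertibility of the tangential boundary operator $\partial_0$ (spectrum $(2\bZ+1)\pi$, hence a uniform gap at $0$) gives the standard spectral-gap estimates needed to run the adiabatic-limit argument as $T\to \infty$. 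In that limit the resolvents of $\DiracbSP_{\bA,T}$ converge in norm to a block-diagonal form, producing a Fredholm homotopy to the orthogonal direct sum of $(\DiracSP_A, B_{<0}(\partial))$ on $\Sigma$ --- which is exactly $D^+_\bullet$ from Definition~\ref{d:thefamily} --- and the cap operators $(\DiracSP_{\bA|_{\bD_j}}, B_{\ge 0}(\partial))$ on each $\bD_j$ with the complementary APS condition determined by the orientation reversal at the gluing interface.

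Next I would show that the cap family represents $0 \in K_{\sul{G}}(\M_\Sigma)$. Since $\bA|_{\bD_j}$ vanishes near $r_j=0$ and depends linearly on $A$ at the boundary, a further homotopy $t\,\bA|_{\bD_j}$, $t\in[0,1]$, stays within Fredholm operators (the APS condition is fixed and the symbol is unchanged). At $t=0$, the operator is the untwisted spin Dirac operator on $\bD$ coupled to $V$ with complementary APS boundary condition. By explicit Fourier analysis on the disk, the kernel of $\ol{\partial}$ consists of holomorphic $V$-valued functions, whose boundary restrictions lie in the non-negative eigenspace of $\partial$; the complementary APS condition, obtained after orientation reversal, is precisely the one that excludes these modes, giving trivial $L^2$-kernel. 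A parallel argument using $(\ol{\partial})^*$ treats the cokernel.

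Combining these two steps yields a Fredholm homotopy $\DiracbSP_\bullet \simeq D^+_\bullet \oplus (\text{invertible})$, gauge-equivariant throughout, hence an equality of K-theory classes $[\DiracbSP_{q(\bullet)}]=[D^+_{q(\bullet)}]=\sf{E}^\Sigma V$ in $K_{\sul{G}}(\M_\Sigma)$. The main technical obstacle is the first step: establishing the adiabatic splitting as a norm-continuous deformation of Fredholm families (not merely an index identity), uniformly in $A \in \A_{\Sigma,\C}^\flat$. The necessary estimates are standard in the literature on APS gluing (Bunke, Bismut--Cheeger, Booss--Wojciechowski), and are simplified here by the fact that the boundary operator $\partial$ and its spectral projection $B_{<0}(\partial)$ are \emph{fixed}, independent of $A$, so only the zeroth-order potential term $(\c^\Sigma\boxtimes \pi_V)(\bA)$ varies with $A$.
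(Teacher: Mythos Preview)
Your approach is correct and follows essentially the same two-step strategy as the paper: (i) split the closed-surface family into the $\Sigma$-family with APS condition plus cap families, and (ii) show the cap families are K-theoretically trivial. The paper simply cites a families version of the splitting theorem (B\"ar--Ballmann, Dai) for step (i), whereas you outline the adiabatic neck-stretching that underlies such results; since the boundary operator $\partial$ and projection $B_{<0}(\partial)$ are fixed, this is indeed routine.

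The one genuine difference is in step (ii). You contract the cap connection linearly, $t\,\bA|_{\bD_j}$, and then argue that the untwisted disk operator is \emph{invertible}. The paper instead observes that the $j$-th cap family is the pullback, under the $j$-th component of the moment map $\mu_{\M_\Sigma}\colon \M_\Sigma \to L\ul{\g}^*$, of a tautological $G$-equivariant family over $L\g^*$; since $L\g^*$ is $G$-equivariantly contractible, the K-class is constant, determined by a single numerical index, which is $0$ by the APS formula. Your linear homotopy is in fact a concrete realization of this contractibility (the constant gauge action on $L\g^*$ is linear, so $t\mapsto ta$ is $G$-equivariant), so the two arguments are close cousins. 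However, your invertibility claim is stronger than needed: once you have homotoped to a \emph{constant} $\sul{G}$-equivariant Fredholm family, it suffices that its equivariant index vanish in $R(G)$, and this is what the paper checks. Your sketch of invertibility conflates the spin Dirac operator with $\bar\partial$ and does not track the induced (non-trivial) spin structure on $S^1$ carefully; it can likely be made to work, but it is unnecessary labor.
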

\begin{proof}
This is a consequence of a families version of the splitting theorem (cf. \cite[Theorem 8.17]{BarBallmann}, and see for example \cite{dai1996splitting} for a families version which is more sophisticated than is really needed here): the K-theory class defined by the family of Dirac operators $\DiracbSP_\bA$ is the sum of the class $\sf{E}^\Sigma V$ defined by the family of boundary problems $(\DiracSP_A,B_{<0}(\partial))$, and the classes defined by the family of boundary problems $(\Dirac{}^{\bD_j,+}_{\bA|_{\bD_j}},B_{\le 0}(\partial))$, $j=1,...,b$. Each of the latter boundary problems on the disk is the pullback under the moment map of a tautological family of boundary problems parametrized by $L\g^*$. But $K^0_G(L\g^*)=R(G)$ because $L\g^*$ is $G$-equivariantly contractible. To determine which element of $R(G)$, it suffices to compute the index of a single operator in the family, which is $0$ (see for example the discussion at the end of Section \ref{s:familyAPS}).
\end{proof}

Thus using the family $\DiracbSP_{q(\bullet)}$ leads to the same class $\sf{E}^\Sigma V \in K_{\sul{G}}(\M_\Sigma)$. The loop group space $\M_\Sigma$ has a finite dimensional (singular) symplectic quotient $\mu_{\M_\Sigma}^{-1}(0)/\sul{G}$, and Proposition \ref{p:cappedclass} also shows that the class induced by $\sf{E}^\Sigma V$ on this finite dimensional moduli space coincides with the corresponding classes introduced by Atiyah and Bott \cite{AtiyahBottYangMills}.

\subsection{Chern character forms}\label{s:ChChar}
Recall that we defined (Definition \ref{d:EV}) an $L\sul{G}$-equivariant vector bundle $\sf{E}V\rightarrow \M_\Sigma \times \Sigma$. 
\begin{definition}
Let $\bE V \rightarrow \M_\Sigma \times \bSigma$ be the vector bundle
\[ \bE V=(\A_{\Sigma,\C}^\flat\times \bSigma \times V)/\G_{\Sigma,\C} \rightarrow \M_\Sigma\times \bSigma. \] 
\end{definition}
Unlike $\sf{E}V$, the vector bundle $\bE V$ is only $\G_{\Sigma,\C-lc}/\G_{\Sigma,\C}=\sul{G}$-equivariant. The restriction of $\bE V$ to $\M_\Sigma\times \Sigma \subset \M_\Sigma\times \bSigma$ is canonically identified with $\sf{E}V$. We will use the same symbol $\bE V$ (resp. $\sf{E}V$) to denote the $N(\sul{T})$-equivariant (resp. $N(\sul{T})\ltimes \ul{\Pi}$-equivariant) vector bundle obtained by restricting $\bE V$ to $X \times \bSigma$ (resp. restricting $\sf{E}V$ to $X \times \Sigma$). 

The restriction of $\bE V$ to $X \times (\C \cup \bD_1 \cup \cdots \cup \bD_b)$ is canonically trivial since $\G_{\Sigma,\C}$ acts trivially over this subset of $\bSigma$. Let $\nabla^{\sf{E}V}$ be a $N(\sul{T})\ltimes \ul{\Pi}$-invariant connection on the vector bundle $\sf{E}V=\bE V|_{X\times \Sigma}$. Over the collar and relative to the trivialization, $\nabla^{\sf{E}V}|_{X\times \C_j}=\d+\alpha_j$ for some connection $1$-form $\alpha_j$ on $X \times \C_j$. The trivialization of $\sf{E}V|_{X\times\C_j}$ is not compatible with the $\ul{\Pi}$-action: under the action of $\ul{\eta}=(\eta_1,...,\eta_b)\in \ul{\Pi}$, the de Rham differential $\d=\d_X+\d_{r_j}+\d_{s_j}$ on the collar transforms as 
\begin{equation} 
\label{e:deRhamPi}
\exp(s_j\eta_j)(\d_X+\d_{r_j}+\d_{s_j})\exp(-s_j\eta_j)=\d_X+\d_{r_j}+\d_{s_j}-\eta_j. 
\end{equation}
Therefore to obtain a $\ul{\Pi}$-invariant connection, the $1$-form $\alpha_j$ cannot be $0$. Perhaps the simplest choice is to take $\alpha_j$ of the form
\begin{equation}
\label{e:alphaj}
\alpha_j(x,r_j,s_j)=\mu_j(x)f(s_j)\d s_j,
\end{equation} 
where $f(s_j)$ is any smooth function on the circle with total integral $1$ (we could choose $f=1$ but the additional flexibility will be convenient later on). Since $\mu_j(\ul{\eta}\cdot x)=\mu_j(x)+\eta_j$, equation \eqref{e:deRhamPi} shows that the connection $\d+\alpha_j$ on $\bE V|_{X\times \C_j}$ is $\ul{\Pi}$-invariant.

Let $\nabla^{\bE V}$ be the $N(\sul{T})$-invariant connection on $\bE V$ extending $\nabla^{\sf{E}V}$ given by $\d+\chi(r_j)\alpha_j$ over $X\times (\C_j \cup \bD_j)$, where from now on we assume $\alpha_j$ is as in \eqref{e:alphaj}.

The Chern-Weil construction yields equivariant Chern character forms $\Ch^{\ul{u}}(\bE V,\ul{\xi}) \in \Omega(X^{\ul{u}}\times \bSigma)$, for $\ul{u} \in \sul{T}$, $\ul{\xi} \in \ul{\t}$. Define $\Ch^{\ul{u}}(\sf{E}^\Sigma V,\ul{\xi})\in \Omega(X^{\ul{u}})$ by integrating over the fibers:
\begin{equation}
\label{e:ChForm1}
\Ch^{\ul{u}}(\sf{E}^\Sigma V,\ul{\xi})=\int_{\bSigma} \Ch^{\ul{u}}(\bE V,\ul{\xi}). 
\end{equation}
Since $\Ahat(\bSigma)=1$, the equivariant Atiyah-Singer families index theorem over a compact base \cite{AtiyahSingerIV}, \cite[Chapters 8, 10]{BerlineGetzlerVergne} shows that the pullback of $\Ch^{\ul{u}}(\sf{E}^\Sigma V,\ul{\xi})$ to any relatively compact open $\sul{T}$-invariant $U \subset X^{\ul{u}}$ represents the equivariant Chern class of $\sf{E}^\Sigma V|_U$.
\begin{theorem}
\label{t:ChernFormProperties}
There is a compact subset $K\subset \t$ and a constant $C>0$ such that for all $\ul{u}\in \sul{T}$ and all $x \in X$, the closed equivariant differential form $\Ch^{\ul{u}}(\sf{E}^\Sigma V,\ul{\xi})\in \Omega(X^{\ul{u}})$ defined in \eqref{e:ChForm1} satisfies:
\begin{enumerate}[i)]
\item the Fourier transform of the vector-valued function
\[ \ul{\xi} \in \ul{\t}\mapsto \Ch^{\ul{u}}(\sf{E}^\Sigma V,\ul{\xi})_x \in \wedge T^*_x X^{\ul{u}} \]
has support contained in $K^b\subset \ul{\t}$;
\item there is an estimate
\[ |\Ch^{\ul{u}}(\sf{E}^\Sigma V,\ul{\xi})_x|\le C(1+|\mu(x)|). \]
\end{enumerate}
The contribution of the caps $\bD_1,...,\bD_b$ to $\Ch^{\ul{u}}(\sf{E}^\Sigma V,\ul{\xi})$ is the $0$-form
\begin{equation}
\label{e:capcontrib}
-(2\pi \i)^{-1}\sum_{j=1}^b \partial_{\mu_j} \Tr_V(u_j\exp(\xi_j))
\end{equation}
where $\ul{u}=(u_1,...,u_b)$, $\ul{\xi}=(\xi_1,...,\xi_b)$ and $\mu=(\mu_1,...,\mu_b)$. Under the action of $\ul{\eta} \in \ul{\Pi}$,
\begin{equation} 
\label{e:etagrad}
\ul{\eta}^*\Ch^{\ul{u}}(\sf{E}^\Sigma V,\ul{\xi})=\Ch^{\ul{u}}(\sf{E}^\Sigma V,\ul{\xi})-(2\pi \i)^{-1}\partial_{\ul{\eta}} \ul{\Tr}_V(\ul{u}\exp(\ul{\xi})).
\end{equation}
\end{theorem}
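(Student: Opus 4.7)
My approach treats the three assertions in turn, starting from the most explicit.

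First, I would verify the cap formula \eqref{e:capcontrib} by a direct Chern--Weil calculation on each disk $\bD_j$. In the trivialization of $\bE V|_{X\times \bD_j}$ the connection is $\d + \chi(r_j)\rho(\mu_j)f(s_j)\d s_j$, with $\rho=\pi_V$. Because $\rho(u_j)$, $\rho(\xi_j)$, $\rho(\mu_j)$, $\rho(\d\mu_j)$ all commute on each weight space $V_{[\lambda]}$ and act there as scalars, the exponential in $\Ch^{u_j}(\bE V,\xi_j)$ collapses: only the $\d r_j\wedge \d s_j$ component survives the $\bD_j$-integration, as $(\d s_j)^2 = 0$. Performing this integration using $\int_0^1\chi'(r)\d r = 1 = \int_0^1 f(s)\d s$ produces
\[
\int_{\bD_j}\Ch^{u_j}(\bE V,\xi_j) = -\sum_\lambda \dim V_{[\lambda]}\langle\lambda,\mu_j\rangle\, e^\lambda(u_j) e^{2\pi \i\langle\lambda,\xi_j\rangle} = -(2\pi \i)^{-1}\partial_{\mu_j}\Tr_V(u_j\exp\xi_j),
\]
establishing \eqref{e:capcontrib}. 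This is a finite linear combination of oscillators indexed by the weights of $V$, with scalar coefficients of modulus at most $\dim V_{[\lambda]}|\langle\lambda,\mu_j\rangle|\le C|\mu(x)|$.

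Next, I would analyze the contribution over $X\times\Sigma$ to obtain (i) and (ii) for that piece. The $N(\sul{T})$-averaged Coulomb-gauge connection $\nabla^{\sf{E}V}$ of Section~\ref{s:ChChar} is $\ul{\Pi}$-invariant, so by compactness of a fundamental domain for the $\ul{\Pi}$-action on $X$ (exactly as in the proof of Theorem~\ref{t:quasiperiodicity}) its curvature $F$ and moment map $\mu^{\sf{E}V}(\ul{\xi})$ are uniformly bounded on $X\times\Sigma$, with the bound on $\mu^{\sf{E}V}(\ul{\xi})$ linear in $|\ul{\xi}|$. Moreover, the $\sul{T}$-action on each fiber of $\bE V$ factors through $\rho$, so is semisimple with spectrum contained in the (finite) weight set of $V$; at each point, in any frame diagonalizing the $\sul{T}$-action, $\mu^{\sf{E}V}(\ul{\xi})$ acts on $V_{[\lambda]}$ as $2\pi\i\langle\lambda,\ul{\xi}\rangle$. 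Since only finitely many $F$-terms contribute to $\Ch^{\ul{u}}(\sf{E}V,\ul{\xi})$ after integration over the 2-dimensional $\Sigma$, and $\tr_V$ extracts traces of words in $F$ and $\mu^{\sf{E}V}(\ul{\xi})$ which, by cyclicity and the Duhamel expansion, collapse into finitely many terms indexed by the weights of $V$, the $\Sigma$-contribution takes the form $\sum_\lambda e^{2\pi\i\langle\lambda,\ul{\xi}\rangle}\omega_\lambda(\ul{u})$ with bounded forms $\omega_\lambda$. This yields Fourier support in $K^b$ where $K$ is the finite set of weights of $V$ together with $0$, and a pointwise bound for the $\Sigma$-piece uniform in $\mu(x)$ and $\ul{\xi}$.

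Finally, I would prove the transformation law \eqref{e:etagrad}. Over $X\times\Sigma$ the connection $\nabla^{\sf{E}V}$ is $\ul{\Pi}$-invariant by construction, so the $\Sigma$-contribution is $\ul{\Pi}$-invariant. Over the $j$-th cap, however, pulling back by $\ul{\eta}$ replaces $\mu_j$ with $\mu_j + \eta_j$, hence $\ul{\eta}^*\alpha_j = \alpha_j + \eta_j f(s_j)\d s_j$. Re-running the cap computation above with $\mu_j$ replaced by $\mu_j + \eta_j$ yields $\ul{\eta}^*\Ch^{u_j}_{\mathrm{cap},j} = -(2\pi\i)^{-1}\partial_{\mu_j+\eta_j}\Tr_V(u_j\exp\xi_j)$, and subtracting the unperturbed cap formula and summing over $j$ gives the claimed additive correction $-(2\pi\i)^{-1}\partial_{\ul{\eta}}\ul{\Tr}_V(\ul{u}\exp\ul{\xi})$.

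The main obstacle is the $\Sigma$-contribution analysis: because $F$ and $\mu^{\sf{E}V}(\ul{\xi})$ do not commute pointwise in $\End V$, one cannot simply factor $\exp(-F^{\ul{u}}(\ul{\xi})/2\pi\i)$ into diagonal and curvature parts. The combination of trace cyclicity and the fact that only finitely many $F$-terms contribute on a 2-dimensional surface is what forces the formal power series in $\ul{\xi}$ to collapse into a trigonometric polynomial with frequencies only at the weights of $V$.
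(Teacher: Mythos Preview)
Your proof is correct and follows the same decomposition as the paper: split the fiber integral over $\bSigma$ into the $\Sigma$-piece and the caps $\bD_1,\dots,\bD_b$; handle the former via $\ul{\Pi}$-invariance of $\nabla^{\sf{E}V}$ and the latter by the explicit Chern--Weil computation; then read off \eqref{e:etagrad} from the cap formula combined with $\mu_j(\ul{\eta}\cdot x)=\mu_j(x)+\eta_j$.

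The one place you work harder than necessary is the $\Sigma$-contribution. You flag the possible non-commutativity of the curvature $F$ and the moment $\mu^{\sf{E}V}(\ul{\xi})$ as the ``main obstacle'' and reach for a Duhamel expansion plus trace cyclicity. This detour is not needed: the connection $\nabla^{\sf{E}V}$ was chosen $N(\sul{T})$-invariant, hence $\sul{T}$-invariant, so its curvature commutes with the fibrewise $\sul{T}$-action and therefore with both $\ul{u}$ and $\mu^{\sf{E}V}(\ul{\xi})$. In the weight decomposition of the fiber the Chern--Weil exponential is already block-diagonal, and the trace factors cleanly weight by weight. The paper exploits this implicitly and is correspondingly terse: once the $\Sigma$-contribution is seen to be $\ul{\Pi}$-invariant it descends to the compact quotient $X^{\ul{u}}/\ul{\Pi}$, and then finite rank of $\bE V$ plus compactness give (i) and (ii) directly for that piece, with the explicit cap term \eqref{e:capcontrib} supplying the rest.
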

\begin{proof}
The fiber integral over $\bSigma$ can be split into integrals over $\Sigma$, and $\bD_1,...,\bD_b$. Since $\nabla^{\bE V}\upharpoonright X \times \Sigma$ is $\ul{\Pi}$-invariant, the contribution of the integral over $\Sigma$ is $\ul{\Pi}$-invariant hence easily satisfies properties i) and ii). On the other hand, over $X\times \bD_j$ we have an explicit formula for the connection 1-form ($\chi(r_j)\mu_j(x)f(s_j)\d s_j$), with curvature
\[ F_j(x,r_j,s_j)=\chi'(r_j)\mu_j(x)f(s_j)\d r_j\d s_j+\chi(r_j)f(s_j)\d \mu_j(x)\d s_j . \]
At the point $(x,r_j,s_j)\in X^{\ul{u}} \times \bD_j$, the form $\Ch^{\ul{u}}(\bE V,\ul{\xi})$ is $\Tr_V(u_j\exp(\xi_j)\exp(\tfrac{\i}{2\pi}F_j(x,r_j,s_j)))$. Integrating over $\bD$, the term involving $\d\mu_j(x)$ does not contribute since it does not contain a factor of $\d r_j$, while the integral of $\chi'(r_j)\mu_j(x)f(s_j)\d r_j\d s_j$ is $\mu_j(x)$. Hence the contribution of $\bD_j$ to the integral over the fiber at $x \in X^{\ul{u}}$ is 
\begin{equation} 
\label{e:afterint}
\Tr_V(u_j\exp(\xi_j)\tfrac{\i}{2\pi} \mu_j(x))=-(2\pi \i)^{-1}\partial_{\mu_j(x)}\Tr_V(u_j\exp(\xi_j)).
\end{equation}
To deduce i), note that the $\t$-Fourier transform of \eqref{e:afterint} (i.e. the Fourier transform in the variable $\xi_j$) is in any compact ball $K \subset \t^*$ containing the weights of the finite dimensional $T$-representation $V$. The estimate in ii) follows from \eqref{e:afterint} since $|\mu_j(x)|\le |\mu(x)|$. Equation \eqref{e:etagrad} is an immediate consequence of \eqref{e:afterint} and $\mu_j(\ul{\eta}x)=\mu_j(x)+\eta_j$, for $\ul{\eta}=(\eta_1,...,\eta_b)\in \ul{\Pi}$.
\end{proof}

\subsection{Kirillov-Berline-Vergne formula}
Theorem \ref{t:ChernFormProperties} (as well as the remarks in the paragraph preceding the proposition) show that the hypotheses of \cite[Theorem 4.5]{LIndexFormula} are satisfied, and therefore we have a Kirillov-Berline-Vergne formula (equation (43) in \cite{LIndexFormula}) for $\index^k_{\sul{T}}(\sf{E}^\Sigma V)$ in terms of integration of equivariant differential forms over fixed-point submanifolds $X^{\ul{u}}\subset X$. For the special case $\ul{u}=1$ the formula states that for $\ul{\xi}\in \ul{\t}$ sufficiently small,
\begin{equation} 
\label{e:KirillovBerlineVergne}
\index^k_{\sul{T}}(\sf{E}^\Sigma V)(\exp(\ul{\xi}))=\int_X \Ahat(X,\ul{\xi})\Ch(\sf{E}^\Sigma V,\ul{\xi})\Ch(\scr{L}_k,\ul{\xi})^{1/2}\Ch(\scr{B},\ul{\xi}),
\end{equation}
where $\scr{L}_k=\scr{L}_{\tn{can}}\otimes L^{2k}$ is the line bundle \eqref{e:anticanon} and $\Ch(\scr{B},\ul{\xi})$ is the pullback of an equivariant Chern form for the Thom-Bott element \eqref{e:ThomBott}. There is a similar formula for general $\ul{u}$. The integral converges in the sense of distributions, i.e. if $\chi(\ul{\xi}) \in C^\infty_c(\ul{\t})$ has support sufficiently close to $0 \in \ul{\t}$, then the integral over $X$ converges after pairing the integrand with the test function $\chi(\ul{\xi})$.

Equation \eqref{e:KirillovBerlineVergne} is deduced in \cite{LIndexFormula} starting from the (K-theory) non-abelian localization formula for the index. A theorem of Paradan-Berline-Vergne can be applied to obtain a cohomological formula for the contribution of each component of the critical set of the norm-square of the moment map. Having passed to cohomology, the contributions can be re-summed, now using non-abelian localization in cohomology (in the reverse direction that it is usually applied), resulting in \eqref{e:KirillovBerlineVergne}.

\section{The fixed point formula}
In \cite[Theorem 4.11]{LIndexFormula} we proved an Atiyah-Bott-Segal-Singer-type fixed point formula for the index of suitable generating series of admissible K-theory classes on a Hamiltonian loop group space. Here we shall calculate the fixed-point contributions for the Atiyah-Bott classes.

\subsection{Application of the fixed point formula}
Let $t$ be a formal variable. Following Teleman and Woodward \cite{TelemanWoodward}, define a formal series of K-theory classes
\begin{equation}
\label{e:defformalseries}
\sf{E}_t=\exp(t\sf{E}^\Sigma V)=\sum_{n=0}^\infty \frac{t^n}{n!}(\sf{E}^\Sigma V)^{\otimes n} \in \bC K^{\ad}_{\sul{G}}(\M_\Sigma)\ft,
\end{equation}
where $\bC K^{\ad}_{\sul{G}}(\M_\Sigma)\ft=\bC\otimes K^{\ad}_{\sul{G}}(\M_\Sigma)\ft$, and $K^{\ad}_{\sul{G}}(\M_\Sigma)\ft$ denotes the $R(\sul{G})\ft$-algebra of formal series (in $t$) whose coefficients are admissible K-theory classes. The equivariant index of such a series is the element of $\bC R^{-\infty}(T)\ft$ obtained by taking the index term by term. By Theorem \ref{t:ChernFormProperties} the corresponding formal series of Chern character forms satisfies
\begin{equation} 
\label{e:twistedequivCh}
\ul{\eta}^*\Ch^{\ul{u}}(\sf{E}_t,\ul{\xi})=e^{-t(2\pi \i)^{-1}\partial_{\ul{\eta}} \ul{\Tr}_V(\ul{u}\exp(\ul{\xi}))}\Ch^{\ul{u}}(\sf{E}_t,\ul{\xi}), \quad \text{for}\quad \ul{\eta}\in \ul{\Pi}.
\end{equation}
In the terminology of \cite[Definition 4.7 and Section 4.6]{LIndexFormula}, equation \eqref{e:twistedequivCh} says that $\sf{E}_t$ admits infinitesimally twisted $N(\sul{T})\ltimes \ul{\Pi}$-equivariant Chern character forms, with infinitesimal twist 
\[ v=\tfrac{1}{4\pi^2}\nabla \ul{\Tr}_V \in (\ul{\t}_\bC \otimes \bC R(\sul{T})\ft )^{\sul{W}}, \]
where the gradient $\nabla$ is with respect to the basic inner product. Equation \eqref{e:twistedequivCh} and Theorem \ref{t:ChernFormProperties} show that the hypotheses of \cite[Theorem 4.11]{LIndexFormula} are satisfied, and a direct application of the latter yields the index formula:
\begin{equation}
\label{e:abelianlocAB0}
\index^k_{\sul{T}}(\sf{E}_t)=\frac{1}{|\sul{T}_\ell|}\sum_{\sul{g} \in \sul{T}_\ell^\reg/\sul{W}}\frac{\sum_{\sul{w}\in \sul{W}}(-1)^{l(\sul{w})}\delta_{\sul{w}\sul{g}_t}}{\det(1-t\ell^{-1}\sul{H}_V(\sul{g}_t))}\int_{X^{\sul{g}}/\ul{\Pi}} \exp\big(t\Ch^{\sul{g}_t}(\sf{E}^\Sigma V)\big)\A\S^{\sul{g}_t}(\sigma).
\end{equation}
Both sides of \eqref{e:abelianlocAB0} are formal series (in $t$) of $\sul{W}$-antisymmetric distributions on $\sul{T}$. The set $\sul{T}_\ell^{\reg}$ is the subset of regular elements of the finite subgroup $\sul{T}_\ell=\ell^{-1}\ul{\Lambda}/\ul{\Pi}$ of $\sul{T}$. For $\sul{g} \in \sul{T}_\ell^\reg$, $\sul{g}_t \in \J_{\sul{g}}^\infty \sul{T}_\bC$ is the unique $\infty$-jet (or $\bC \ft$-point) based at $g$ that solves the fixed point equation $\Phi_t(\sul{g}_t)=g$ and $\Phi_t(\ul{u})=\ul{u}\exp(t\ell^{-1}v(\ul{u}))$, $v=\tfrac{1}{4\pi^2}\nabla \ul{\Tr}_V$. In the numerator $\delta_{\sul{w}\sul{g}_t}$ is the formal series of distributions supported at the point $\sul{w}\sul{g}$ that acts on a character $f \in \bC R(\sul{T})$ by evaluation
\[ f\mapsto f(\sul{w}\sul{g}_t) \in \bC\ft,\] 
defined using Taylor expansion of $f$, viewed as a function on $T_\bC$.

The full expression is independent of the choices of representatives of orbits for $\sul{T}_\ell^\reg \circlearrowleft \sul{W}$, but for later convenience we choose the sum to run over the unique set of representatives in $(\sul{T}_\ell)^\reg \cap \exp(\ul{\mf{A}})$, where $\ul{\mf{A}}\subset \ul{\t}_+$ is the fundamental alcove. In the denominator $\sul{H}_V(\ul{u})=\d v(\ul{u})\in \End(\ul{\t}_\bC)$ is $1/4\pi^2$ times the Hessian of $\ul{\Tr}_V$ at $\ul{u}$. The differential form $\A\S^{\sul{g}_t}(\sigma)$ is the Atiyah-Singer integrand associated to the symbol $\sigma$ obtained by Bott-Thom twist of the symbol of the Dirac operator $\Dirac_{\scr{S}_k}$ (of course the latter can be written in terms of familiar characteristic forms). 

In \cite{LIndexFormula}, the index formula \eqref{e:abelianlocAB0} is deduced from the Kirillov-Berline-Vergne formula \eqref{e:KirillovBerlineVergne} using Theorem \ref{t:ChernFormProperties}. The integral over the non-compact manifold $X$ (or $X^{\ul{u}}$ more generally) is replaced with an integral over a fundamental domain for the action of $\ul{\Pi}$ and a sum over $\ul{\Pi}$. The latter sum is evaluated using the Poisson summation formula and the change of variables $\Phi_t$. Further localization (abelian localization for integrals of closed equivariant forms) and simplification leads to \eqref{e:abelianlocAB0}.

Formulas below will be slightly simpler if we work with the corresponding $\sul{G}$-equivariant index \eqref{e:GequivIndex}, which is an element of $\bC R^{-\infty}(\sul{G})\ft^{\sul{G}}$. Equation \eqref{e:abelianlocAB0} (cf. \cite[Equation (66)]{LIndexFormula}) yields
\begin{equation}
\label{e:abelianlocAB}
\index^k_{\sul{G}}(\sf{E}_t)=\frac{1}{|\sul{T}_\ell|}\sum_{\sul{g} \in \sul{T}_\ell^\reg/\sul{W}}\frac{(-1)^{|\ul{\mf{R}}_+|}\sul{J}(\sul{g}_t)\delta_{\sul{g}_t}}{\det(1-t\ell^{-1}\sul{H}_V(\sul{g}_t))}\int_{X^{\sul{g}}/\ul{\Pi}} \exp\big(t\Ch^{\sul{g}_t}(\sf{E}^\Sigma V)\big)\A\S^{\sul{g}_t}(\sigma).
\end{equation}
Here $\delta_{\sul{g}_t}$ is the formal series of distributions sending $f \in \bC R(\sul{G})$ to $f(\sul{g}_t)$. We aim to evaluate the integral \eqref{e:abelianlocAB}. This will require some preparation.

\subsection{The fixed point locus}
Recall from Section \ref{s:globtrans} that $X/\ul{\Pi} \subset M_\Sigma=\M_\Sigma/\Omega \sul{G}$, where $M_\Sigma$ is the holonomy manifold of $\M_\Sigma$ introduced in Section \ref{s:modsp}. We shall use a parametrization of $M_\Sigma$ similar to \cite[Theorem 9.3]{AlekseevMalkinMeinrenken}. Let $p_j \in \partial_j \Sigma\simeq S^1=\bR/\bZ$ be the basepoint on the $j$-th boundary component, and recall
\[ P=\{p_1,...,p_b\} \subset \partial \Sigma.\]
Let $a_1,b_1,...,a_\sg,b_\sg$ be a standard set of generators for $\pi_1(\bSigma,p_1)$ with images contained in $\Sigma$; $c_2,...,c_b$ a set of paths connecting $p_1$ to each of $p_2,...,p_b$; and $d_2=\partial_2 \Sigma,...,d_b=\partial_b \Sigma$ the boundary circles. We may arrange that all of these curves are smooth, simple and disjoint except for intersections of the $a_i,b_i,c_j$ at $p_1$ and the intersections $c_j\cap d_j=\{p_j\}$. The set $\{a_i,b_i,c_j,d_j\}$ generates the fundamental groupoid $\Pi_1(\Sigma,P)$ of paths in $\Sigma$ with endpoints contained in $P$.

Let $\G_{\Sigma,P}\subset \G_\Sigma$ denote gauge transformations equal to the identity over the subset $P$. The holonomy manifold may be identified with a space of representations of the fundamental groupoid
\[ M_\Sigma=\A_\Sigma^\flat/\G_{\Sigma,P}\simeq \Hom(\Pi_1(\Sigma,P),G).\] 
The system of generators $\{a_i,b_i,c_j,d_j\}$ determines a diffeomorphism $h\colon M_\Sigma\xrightarrow{\sim} G^{2(\sg+b-1)}$ given by
\begin{equation}
\label{e:holcoord}
[A]\mapsto h([A])=(h_{a_1}(A),h_{b_1}(A),...,h_{a_{\sg}}(A),h_{b_\sg}(A),h_{c_2}(A),h_{d_2}(A),...,h_{c_b}(A), h_{d_b}(A))
\end{equation}
where $h_f(A)$ is the holonomy of the connection $A$ along the path $f$. The action of $\sul{g}=(g_1,...,g_b)\in \sul{G}$ is
\[ h_{a_i}\mapsto g_1h_{a_i}g_1^{-1}, \quad h_{b_i}\mapsto g_1h_{b_i}g_1^{-1}, \quad h_{c_j}\mapsto g_1h_{c_j}g_j^{-1}, \quad h_{d_j}\mapsto g_jh_{d_j}g_j^{-1}.\]
It is convenient to identify $G$ with the diagonal in $\sul{G}$. For $\sul{g}\in \sul{T}_\ell^\reg \cap \exp(\ul{\mf{A}})$, one verifies easily that $M_\Sigma^{\sul{g}}$ is empty unless $g_1=g_2=\cdots=g_b=:g$, and in this case $M_\Sigma^{\sul{g}}=M_\Sigma^g=M_\Sigma^T$ does not depend on the choice of $\sul{g}\in \sul{T}_\ell^{\reg}$ by regularity. Since this space will appear frequently below, we make the following definition.
\begin{definition}
Let $F=M_\Sigma^T$ be the fixed-point set of the diagonal action of $T$ on $M_\Sigma$. Equivalently $F=\Hom(\Pi_1(\Sigma,P),T)$ is the moduli space of $T$-connections with framing over $P$. Under the diffeomorphism $h\colon M_\Sigma \xrightarrow{\sim} G^{2({\rm g}+b-1)}$, $F$ is mapped to the torus $T^{2({\rm g}+b-1)}$.
\end{definition} 
With this observation, we can largely eliminate the `underline' notation in equation \eqref{e:abelianlocAB},
\begin{equation}
\label{e:abelianlocAB2}
\index^k_{\sul{G}}(\sf{E}_t)=\frac{1}{|T_\ell|^b}\sum_{g \in T_\ell^\reg/W}\frac{(-1)^{b|\mf{R}_+|}J(g_t)^b\delta_{g_t}}{\det(1-t\ell^{-1}H_V(g_t))^b}\int_F \exp\big(t\Ch^{g_t}(\sf{E}^\Sigma V)\big)\A\S^{g_t}(\sigma),
\end{equation}
where $g_t \in \J^\infty_g(T_\bC)$ satisfies $\Phi_t(g_t)=g$, and $\Phi_t(u)=u\exp(t\ell^{-1}v(u))$, $v(u)=\tfrac{1}{4\pi^2}\nabla \Tr_V(u)$.

In terms of the non-compact manifold $X$, $F\simeq X^T/\ul{\Pi}$ and $X^T$ can be identified with the fiber product
\[ X^T=\ul{\t}\times_{\sul{T}} F=\ul{\t}\times_{\sul{T}}\Hom(\Pi_1(\Sigma,P),T), \]
with the moment map $\mu=(\mu_1,...,\mu_b)$ being the projection to $\ul{\t}$, and the map to $\sul{T}$ being the map that associates to an equivalence class of a connection its list of holonomies around the boundary circles. For an \emph{abelian} flat connection on $\Sigma$, the product of the holonomies around the boundary circles is $1$, and it follows that the sum
\begin{equation}
\label{e:defbarmu}
\bar{\mu}:=\sum_{i=1}^b\mu_i \colon X \rightarrow \t
\end{equation}
takes values in $\Pi$. In fact the connected components of $X^T$ are the fibers $\bar{\mu}^{-1}(\eta)$ as $\eta$ ranges over the lattice $\Pi$.

On the fixed point set $X^T$, the Atiyah-Segal-Singer integrand $\A\S^{g_t}(\sigma)$ simplifies to\ignore{\footnote{The problem here is that we need to use the holomorphic extension! I forgot about that when making the previous edits. So we shouldn't be using $\ol{J(g_z)}$ which isn't holomorphic, but rather $J(g_z^{-1})$. Or else we should go back to using $\Delta$. In the $SU(2)$ case we are taking about $J(u)=u-u^{-1}$ and the product we want in the denominator is $(u-u^{-1})(u^{-1}-u)$ which one could write as $-(u-u^{-1})^2$, but maybe nicer looking is to write it as $4\sin^2(\theta)$ where $u=e^{\i\theta}$.}}
\begin{equation} 
\label{e:ASSsimplification}
\frac{J(g_t)^b}{(-1)^{|\mf{R}_+|(\sg+b-1)}J(g_t)^{2(\sg+b-1)}}g_t^{\ell \bar{\mu}}.
\end{equation}
The main ingredients involved in the calculation \eqref{e:ASSsimplification} (we refer the reader to \cite[Section 5.5]{AMWVerlinde} for details) are (i) $\Ahat(F)=1$; (ii) the fixed-point set $X^T$ is contained in the $0$-fiber of the map $X\xrightarrow{\mu_\Sigma}\sul{R}\rightarrow \ul{\t}^\perp$, so the Chern character $\Ch^{g_t}(\scr{B})$ of the Bott-Thom element (which occurs as a factor in the Atiyah-Singer integrand $\A\S^{g_t}(\sigma)$) may be replaced by its pullback to $0 \in \ul{\t}^\perp$ which is $J(g_t)^b$; (iii) the normal bundle to the fixed-point manifold is the trivial bundle with fiber $(\g/\t)^{2(\sg+b-1)}$ and contributes a factor $(-1)^{|\mf{R}_+|(\sg+b-1)}J(g_t)^{2(\sg+b-1)}$ in the denominator; the factor $g_t^{\ell\bar{\mu}}$ which comes from the action of $g_t$ on the spinor bundle (note that this factor is absent when $t=0$, since $g\in T_\ell$, $\bar{\mu} \in \Pi$ imply $g^{\ell \bar{\mu}}=1$). With these simplifications \eqref{e:abelianlocAB2} becomes
\begin{equation}
\label{e:abelianlocAB3}
\index^k_{\sul{G}}(\sf{E}_t)=\frac{1}{|T_\ell|^b}\sum_{g \in T_\ell^\reg/W}\frac{(-1)^{|\mf{R}_+|(1-\rm{g})}J(g_t)^{2(1-\rm{g})}\delta_{g_t}}{\det(1-t\ell^{-1}H_V(g_t))^b}\int_F g_t^{\ell \bar{\mu}}e^{\frac{1}{2}c_1(\scr{L})+t\Ch^{g_t}(\sf{E}^\Sigma V)}.
\end{equation}
We evaluate the integral over $F$ in the next section.

\subsection{The Teleman-Woodward formula for $\sf{E}^\Sigma V$}\label{s:TelWooEven}
The integrand in \eqref{e:abelianlocAB3} is defined a priori on $X^T$, but descends to the quotient $F=X^T/\ul{\Pi}$. We verified this in a more general context in  \cite[Theorem 4.9]{LIndexFormula}, but we can also check it directly in this special case. Recall that in Section \ref{s:ChChar} we defined the equivariant Chern character form of $\sf{E}^\Sigma V$ by integration over the fibers $X \times \bSigma \rightarrow X$ of an equivariant Chern character form for $\bE V$. The contribution of $\Sigma \subset \bSigma$ is $\ul{\Pi}$-invariant. On the other hand we calculated the contribution from the caps $\bD_1,...,\bD_b$ in \eqref{e:capcontrib}; when restricted to the diagonally embedded $T\subset \sul{T}$ (as in \eqref{e:abelianlocAB3}), this contribution simplifies to 
\[ -(2\pi \i)^{-1}\partial_{\bar{\mu}}\Tr_V(u\exp(\xi)). \]
Exponentiating, we find that the contribution of the caps $\bD_1,...,\bD_b$ to $e^{t\Ch^{g_t}(\sf{E}^\Sigma V)}$ is a multiplicative factor $\exp(-t(2\pi \i)^{-1}\partial_{\bar{\mu}}\Tr_V(g_t))$. The product
\[ g_t^{\ell \bar{\mu}}\exp(-t(2\pi \i)^{-1}\partial_{\bar{\mu}}\Tr_V(g_t))=g_t^{\ell \bar{\mu}}\exp(2\pi \i tv(g_t)\cdot \bar{\mu})=\Phi_t(g_t)^{\ell \bar{\mu}}=g^{\ell \bar{\mu}}=1 \]
since $\bar{\mu}|_{X^T}$ takes values in $\Pi$ and $g \in T_\ell$. It follows that the integrand in \eqref{e:abelianlocAB3} descends.

It remains to calculate the ($\ul{\Pi}$-invariant) contribution from $\Sigma$. This involves the integral of the Chern character form for the vector bundle $\sf{E}V|_{X^T\times \Sigma}=\bE V|_{X^T \times \Sigma}$ over the fibers $X^T\times \Sigma \rightarrow X$. Since $\sf{E}V$ is $\ul{\Pi}$-equivariant, we can work with the vector bundle 
\begin{equation} 
\label{e:bmEV}
\bm{E}V=\sf{E}V|_{X^T\times \Sigma}/\ul{\Pi}
\end{equation}
over the compact manifold with boundary $F\times \Sigma=(X^T/\ul{\Pi})\times \Sigma$. 

In the discussion below, homology/cohomology groups are taken with $\bC$ coefficients. Recall $P=\{p_1,...,p_b\}$ and note that $\Hom(\Pi_1(\Sigma,P),T)\simeq \Hom(\pi_1(\Sigma/P,P),T)$, where $\pi_1(\Sigma/P,P)$ denotes the fundamental group of the space $\Sigma/P$ based at the point $P$ (in the quotient). Let $(\Sigma/P)^\sim$ be the universal covering space of $\Sigma/P$. Given a character $\lambda \in \Lambda=\Hom(T,U(1))$, let 
\[ h^\lambda \colon F \rightarrow \Hom(\pi_1(\Sigma/P,P),U(1)) \] 
be the composition of $\lambda$ with the holonomy map 
\[ F \xrightarrow{\sim} \Hom(\pi_1(\Sigma/P,P),T).\] 
Define a complex line bundle
\[ \bm{E}\bC_\lambda=(F\times (\Sigma/P)^\sim \times \bC)/\pi_1(\Sigma/P,P)\rightarrow F\times \Sigma/P \]
where $f \in \pi_1(\Sigma/P,P)$ acts on the product by
\[ f\cdot (y,p,z)=(y,f\cdot p,h^\lambda(y,f)\cdot z).\]
Let $\{f_s\}_{s \in S}$ be a set of integral generators of $H_1(\Sigma/P)$ and let $f^s$ be the dual basis of $H^1(\Sigma/P)$ with respect to the canonical pairing. Pairing $h^\lambda$ with $f_s$ yields a map 
\[ \pair{h^\lambda}{f_s}\colon F \rightarrow U(1), \] 
the $\lambda$-component of the holonomy of a connection along $f_s$.
Let $f_s^\lambda \in H^1(F)$ be the pullback of the integral generator of $H^1(U(1))$ under $\pair{h^\lambda}{f_s}$. The $1$-st Chern class of the line bundle $\bm{E}\bC_\lambda$ is
\begin{equation} 
\label{e:Llambda}
c_1(\bm{E}\bC_\lambda)=\sum_s f_s^\lambda f^s \in H^1(F)\hotimes H^1(\Sigma/P)\subset H^2(F\times \Sigma/P).
\end{equation}

We may take the set of generators $\{f_s\}_{s \in S}$ of $\pi_1(\Sigma/P,P)$ to be the images of the elements $\{a_i,b_i,c_j,d_j\}$ introduced in the previous section under the quotient map $\Sigma \rightarrow \Sigma/P$; we shall abuse notation and denote these images by the same symbols. The set $S$ thus contains $2(\sg+b-1)$ elements. Let $a^i,b^i,c^j,d^j \in H^1(\Sigma/P)$ be the dual basis. To obtain differential form representatives, recall that $H^1(\Sigma/P)\simeq H^1(\Sigma,P)$, so the relevant de Rham cohomology group is the relative group 
\[ H^1_{\tn{dR}}(\Sigma,P)=\Omega^1_{\tn{cl}}(\Sigma)/\{\d f\mid f \in \Omega^0(\Sigma),\, f|_P \text{ is constant}\}. \]
Represent the classes $a^i$, $b^i$ by Thom forms supported in tubular neighborhoods of smooth 1-cycles contained in $\Sigma \backslash \C$ which are homotopic to $b_i$, $a_i$ respectively. Represent the classes $c^j$, $d^j$ by Thom forms supported in tubular neighborhoods of the curves $d_j$, $c_j$ respectively. We will abuse notation and use the same symbols $a^i,b^i,c^j,d^j$ for the differential form representatives. Using the pullback of the standard $1$-form on $U(1)$, we also obtain differential form representatives for the $f_s^\lambda=a_i^\lambda,b_i^\lambda,c_j^\lambda,d_j^\lambda$. By \eqref{e:Llambda}, the $1$-st Chern form of $\bm{E}\bC_\lambda$ is
\[ c_1(\bm{E}\bC_\lambda)=\sum_{i=1}^{\rm{g}} (a_i^\lambda a^i+b_i^\lambda b^i)+\sum_{j=2}^b (c_j^\lambda c^j+d_j^\lambda d^j).\]
The corresponding equivariant Chern character form is 
\begin{equation} 
\label{e:ChernEClambda}
\Ch^u(\bm{E}\bC_\lambda)=u^\lambda e^{c_1(\bm{E}\bC_\lambda)} \in \Omega(F).
\end{equation} 
If  
\[ V=\bigoplus_\lambda n_\lambda \bC_\lambda \]
is a representation of $T$ then
\[ \bm{E}V=\bigoplus_\lambda n_\lambda \bm{E}\bC_\lambda \]
identifies with the vector bundle $\sf{E}V|_{X^T\times \Sigma}/\ul{\Pi}$ in \eqref{e:bmEV}. Let
\begin{equation}
\label{e:ChernEV}
\Ch^u(\bm{E}V)=\sum_\lambda n_\lambda \Ch^u(\bm{E}\bC_\lambda).
\end{equation}
This is a differential form representative of the $T$-equivariant Chern class of $\bm{E}V$. 

Shortly we will use \eqref{e:ChernEV} to calculate the contribution of $\Sigma \subset \bSigma$ to the Chern character form of $\sf{E}^\Sigma V$ and the integral in \eqref{e:abelianlocAB3}. Justification that \eqref{e:ChernEV} can be used for this purpose is called for because $F \times \Sigma$ has non-empty boundary (having the correct de Rham cohomology class is not quite enough): based on the construction of Section \ref{s:ChChar}, it suffices to show that there is a $\sul{T}\times \ul{\Pi}$-invariant connection on the pullback $\pi^*\bm{E}\bC_\lambda$ of $\bm{E}\bC_\lambda$ under the quotient map $\pi \colon X^T\times \Sigma\rightarrow (X^T/\ul{\Pi})\times \Sigma=F\times \Sigma$ that takes the form (relative to the trivialization)
\begin{equation} 
\label{e:bdform}
\d+\pair{\alpha_j(x,r_j,s_j)}{\lambda}=\d+\pair{\mu_j(x)}{\lambda}f(s_j)\d s_j, \quad \text{ where } \quad \int_{S^1} f(s_j)\d s_j=1
\end{equation}
over the collar $X^T \times \C_j$, and such that the corresponding Chern-Weil form is the pullback $\pi^*c_1(\bm{E}\bC_\lambda)$ of the form constructed above. This follows from the following simple observations:
\begin{enumerate}
\item For $j=2,...,b$, we chose the form $d^j$ to be a Thom form supported in a tubular neighborhood of $\gamma_j$ (a smooth curve connecting $p_1 \in \partial_1 \Sigma$ to $p_j\in \partial_j \Sigma$). We may choose $d^j$ to take a product form on $\C_j$, and then $d^j|_{\C_j}=f(s_j)\d s_j$ for a suitable smooth function $f(s_j)$ whose integral over $S^1$ is $1$.
\item The exponential $\exp(\mu_j(x)) \in T$ is the holonomy of the $\G_{\Sigma,\partial \Sigma}$-equivalence class of connections $x=[A]$ around the boundary circle $\partial_j \Sigma$. Hence for $j=2,...,b$, since $d_j^\lambda$ was the pullback of the standard $1$-form on $U(1)$ by the $\lambda$-component of the holonomy map for $\delta_j$, we have $\d \pair{\mu_j}{\lambda}=\pi^*d_j^\lambda$, and so
\[ \d \pair{\alpha_j}{\lambda}=\pi^*d_j^\lambda d^j|_{X^T\times \C_j}.\]
\item We may choose $d^j$ to take a product form on $\C_1$ as well (near the other end of $\gamma_j$), say $-f(s_1)\d s_1$ (a minus sign appears, relative to the formula near $\C_j$, because the orientation of the boundary is induced from that on $\Sigma$). Then
\[ \sum_{j=2}^b \pi^*d_j^\lambda d^j|_{X^T\times \C_1}=-\sum_{j=2}^b \d\pair{\mu_j}{\lambda} f(s_1)\d s_1 \]
but $\sum_{j=1}^b \mu_j$ is locally constant on $X^T$ as we explained above. Thus
\[ \sum_{j=2}^b \pi^*d_j^\lambda d^j|_{X^T\times \C_1}=\d\pair{\mu_1}{\lambda}f(s_1)\d s_1.\]
\item The three items above show that $\sum_s \pi^*f_s^\lambda f^s|_{X^T\times \C_j}=\d \pair{\alpha_j}{\lambda}$ where $\alpha_j$ is as in \eqref{e:bdform}. It follows from the standard construction of a prequantum connection for a closed integral $2$-form that there is a $\sul{T}\times \ul{\Pi}$-invariant connection on $\pi^* \bm{E}\bC_\lambda \rightarrow X^T\times \Sigma$ with $1$-st Chern form $\sum_s \pi^*f_s^\lambda f^s$. Recall that the construction involves choosing trivializations and primitives over the open subsets of a good cover, thus choosing the specific primitives $\alpha_j$ over $X^T \times \C_j$ ensures that the prequantum connection matches \eqref{e:bdform} on $X^T\times \C_j$.
\end{enumerate}

Proceeding to the calculation, the contribution of $\Sigma$ to $\Ch^u(\sf{E}^\Sigma V)$ is the pullback under $\pi \colon X^T \rightarrow X^T/\ul{\Pi}=F$ of
\begin{equation}
\label{e:SigmaContrib}
\int_\Sigma \Ch^u(\bm{E}V)=\sum_\lambda n_\lambda u^\lambda \int_\Sigma \exp\Big(\sum_s f_s^\lambda f^s\Big).
\end{equation}
Since $\dim(\Sigma)=2$ and the $f_s^\lambda$ are pullbacks of forms on $X^T$, the only term that contributes in the integral over $\Sigma$ comes from the quadratic term in the exponential. Thus
\[ \int_\Sigma \Ch^t(\bm{E}V)=\sum_\lambda n_\lambda t^\lambda \frac{1}{2}\int_\Sigma \Big(\sum_s f_s^\lambda f^s\Big)^2.\]
From our description of the forms, it is clear that
\[ \int_\Sigma a^i b^i=1=-\int_\Sigma b^i a^i, \qquad \int_\Sigma c^j d^j=1=-\int_\Sigma d^jc^j,\]
while all other intersection pairings are trivial. Thus \eqref{e:SigmaContrib} yields
\begin{equation} 
\label{e:ChbmEV}
\int_\Sigma \Ch^u(\bm{E}V)=-\sum_\lambda n_\lambda u^\lambda \Big(\sum_i a_i^\lambda b_i^\lambda+\sum_j c_j^\lambda d_j^\lambda\Big).
\end{equation}
(The minus sign appears after integration over the fibers since for example $a_i^\lambda a^i b_i^\lambda b^i=-a_i^\lambda b_i^\lambda a^i b^i$.) Let $\nu_1,...,\nu_{\dim(\t)}$ be a lattice basis of $\Lambda$. Equation \eqref{e:ChbmEV} is bilinear in $\lambda$, and therefore if we decompose $\lambda=\sum \lambda_p \nu_p$ in terms of the basis, then
\[ \int_\Sigma \Ch^u(\bm{E}V)=-\sum_\lambda \sum_{p,q} n_\lambda u^\lambda \lambda_p\lambda_q\Big(\sum_i a_i^p b_i^q+\sum_j c_j^p d_j^q\Big),\]
where $a_i^p=a_i^{\nu_p}$ and likewise for the other forms. The sum
\[ \sum_\lambda n_\lambda u^\lambda \lambda_p\lambda_q=H_V(u)_{pq} \]
is the $(p,q)$-entry of ($1/4\pi^2$ times) the Hessian matrix of $\Tr_V$ at the point $u$ relative to the basis. Therefore \eqref{e:ChbmEV} may be written
\[ \int_\Sigma \Ch^u(\bm{E}V)=\sum_{p,q} -H_V(u)_{pq}\Big(\sum_i a_i^p b_i^q+\sum_j c_j^p d_j^q\Big).\]

The spinor bundle is at level $\ell$, and one shows (cf. \cite[Corollary 3.12]{meinrenken1999cobordism}, \cite[Proposition 5.2]{AMWVerlinde})
\[ \frac{1}{2}c_1(\scr{L})=\ell \sum_p\Big(\sum_i a_i^p b_i^p+\sum_j c_j^p d_j^p\Big) \]
in $H^2(F)$. Thus the integral from \eqref{e:abelianlocAB3} that we need to compute is
\begin{equation} 
\label{e:integralF}
\int_F \exp\Big(\ell \sum_{p,q}(\delta_{pq}-t\ell^{-1}H_V(u)_{pq})\big(\sum_i a_i^p b_i^q+\sum_j c_j^p d_j^q\big)\Big).
\end{equation}
The product
\[ \prod_{p,i}a_i^p b_i^p\prod_{p,j} c_j^p d_j^p \]
is a top degree form on the $2({\rm g}+b-1)\dim(T)$-dimensional torus $F\simeq T^{2({\rm g}+b-1)}$, whose integral is $|T_1|$ (cf. \cite[Proposition 5.2]{AMWVerlinde}, \cite{BismutLabourie}). Therefore a short calculation with \eqref{e:integralF} leads to
\[ \det(1-\ell^{-1}H_V(u))^{{\rm g}+b-1}|T_\ell|^{{\rm g}+b-1}.\]
Substituting the result of these calculations into \eqref{e:abelianlocAB3} gives the following.

\begin{theorem}
\label{t:indexAB}
The index of the generating series $\exp(t\sf{E}^\Sigma V)$ is
\[ \index^k_{\sul{G}}(\exp(t\sf{E}^\Sigma V))=\sum_{g \in T_\ell^\reg/W} \Big(\frac{(-1)^{|\mf{R}_+|}J(g_t)^2}{|T_\ell|\det(1-t\ell^{-1}H_V(g_t))}\Big)^{1-\sg}\delta_{g_t}\]
where $\ell=k+\hvee$, $g_t \exp(t\ell^{-1}v(g_t))=g$, $v=\tfrac{1}{4\pi^2}\nabla \Tr_V$ and $H_V=\d v$.
\end{theorem}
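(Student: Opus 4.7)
The plan is to start from the abelian localization formula \eqref{e:abelianlocAB3} applied to the generating series $\exp(t\sf{E}^\Sigma V)$ and evaluate the remaining integral over the compact torus $F\simeq T^{2(\sg+b-1)}$; the statement then drops out by collecting powers. Almost all of the ingredients are already assembled in the excerpt, and the remaining work is essentially bookkeeping.

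First I would verify that the integrand in \eqref{e:abelianlocAB3} descends from $X^T$ to the quotient $F=X^T/\ul{\Pi}$. The factor $g_t^{\ell\bar{\mu}}$ coming from \eqref{e:ASSsimplification} is not $\ul{\Pi}$-invariant, but combining it with the cap contribution $\exp(-t(2\pi\i)^{-1}\partial_{\bar{\mu}}\Tr_V(g_t))$ extracted from \eqref{e:capcontrib} gives $\Phi_t(g_t)^{\ell\bar{\mu}}=g^{\ell\bar{\mu}}=1$, since $\bar{\mu}|_{X^T}\in\Pi$ and $g\in T_\ell$. Next I would use the decomposition $\sf{E}V|_{X^T\times\Sigma}/\ul{\Pi}=\bigoplus_\lambda n_\lambda\bm{E}\bC_\lambda$ with Chern character \eqref{e:ChernEClambda} to compute the $\Sigma$-contribution to $\Ch^{g_t}(\sf{E}^\Sigma V)$ over the compact fiber $\Sigma$. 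Only the quadratic term in the exponential contributes; the non-vanishing intersections $\int_\Sigma a^i b^i=\int_\Sigma c^j d^j=1$ reduce the fiber integral to \eqref{e:ChbmEV}. Expanding $\lambda=\sum\lambda_p\nu_p$ in a lattice basis $\{\nu_p\}$ of $\Lambda$, the coefficient $\sum_\lambda n_\lambda u^\lambda\lambda_p\lambda_q$ is precisely the Hessian entry $H_V(u)_{pq}$.

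Combining this with the anticanonical contribution $\tfrac{1}{2}c_1(\scr{L})=\ell\sum_p(\sum_i a_i^p b_i^p+\sum_j c_j^p d_j^p)$, the exponent on $F$ becomes
\[
\ell\sum_{p,q}\bigl(\delta_{pq}-t\ell^{-1}H_V(g_t)_{pq}\bigr)\Bigl(\sum_i a_i^p b_i^q+\sum_j c_j^p d_j^q\Bigr).
\]
Expanding the exponential and extracting the top-degree part, the integral over $F\simeq T^{2(\sg+b-1)}$ is a Gaussian-type integral whose evaluation (using the torus-volume normalization as in \cite[Prop.~5.2]{AMWVerlinde}, \cite{BismutLabourie}) produces $|T_\ell|^{\sg+b-1}\det(1-t\ell^{-1}H_V(g_t))^{\sg+b-1}$.

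Substituting back into \eqref{e:abelianlocAB3}, the prefactor becomes
\[
\frac{(-1)^{|\mf{R}_+|(1-\sg)}J(g_t)^{2(1-\sg)}}{|T_\ell|^b\det(1-t\ell^{-1}H_V(g_t))^b}\,|T_\ell|^{\sg+b-1}\det\bigl(1-t\ell^{-1}H_V(g_t)\bigr)^{\sg+b-1},
\]
and the exponents collapse to a common $1-\sg$ on each of $(-1)^{|\mf{R}_+|}$, $J(g_t)^2$, $|T_\ell|^{-1}$, and $\det(1-t\ell^{-1}H_V(g_t))^{-1}$, yielding the stated formula. I expect the main obstacle to be purely clerical: getting the sign in \eqref{e:ChbmEV} right (from the reordering $a_i^\lambda a^i b_i^\lambda b^i=-a_i^\lambda b_i^\lambda a^i b^i$), and making sure the $|T_\ell|$ factors arising from the lattice normalization on $F\simeq T^{2(\sg+b-1)}$ combine consistently across the pieces so that the common exponent is indeed $1-\sg$.
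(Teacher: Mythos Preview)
Your proposal is correct and follows essentially the same route as the paper: verify descent of the integrand to $F$ via the cancellation $g_t^{\ell\bar{\mu}}\exp(-t(2\pi\i)^{-1}\partial_{\bar{\mu}}\Tr_V(g_t))=\Phi_t(g_t)^{\ell\bar{\mu}}=1$, compute the $\Sigma$-contribution to the Chern character via the weight decomposition of $\bm{E}V$ and the intersection pairings, recognise the Hessian $H_V$, and evaluate the resulting Gaussian-type integral over $F\simeq T^{2(\sg+b-1)}$. The only point the paper treats more carefully than you do is the justification that the explicit Chern--Weil form \eqref{e:ChernEV} can legitimately be used for the fiber integral over $\Sigma$ despite $F\times\Sigma$ having boundary: one must exhibit a $\ul{\Pi}$-invariant connection on the pullback whose restriction to the collar matches the prescribed form \eqref{e:bdform}, so that the $\Sigma$- and cap-contributions glue correctly; you implicitly assume this compatibility when invoking \eqref{e:ChbmEV}.
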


\subsection{The Teleman-Woodward formula for general classes.}
For completeness, in this section we briefly explain how to extend Theorem \ref{t:indexAB} to include the other more elementary Atiyah-Bott classes introduced in Section \ref{s:ABclasses}. For simplicity we restrict to the case of a product of two odd classes.

Let $C_1,C_2$ be smooth simple closed curves in $\Sigma$, and let $U_1,U_2$ be representations of $G$. We consider the product $\sf{E}^{C_1}U_1\cdot \sf{E}^{C_2}U_2 \in K^\ad_{\sul{G}}(\M_\Sigma)$, which is represented by a family of Dirac operators on the $2$-torus $C_1\times C_2 \subset \Sigma \times \Sigma$. The index of $\exp(t\sf{E}^\Sigma V)\cdot\sf{E}^{C_1}U_1\cdot \sf{E}^{C_2}U_2$ is computed using the fixed-point formula as in \eqref{e:abelianlocAB3}. The only change is an additional factor of $\Ch^{g_t}(\sf{E}^{C_1}U_1\cdot \sf{E}^{C_2}U_2)$, and the latter Chern character can be expressed using the families index theorem. The calculations are similar to those carried out for $\sf{E}^\Sigma V$, and in fact much simpler, since the class $\sf{E}^{C_1}U_1\cdot \sf{E}^{C_2}U_2$ is simply the pullback of a K-theory class on the quotient $X/\ul{\Pi}$.

Decompose $U_1,U_2$ into weight spaces:
\[ U_i=\bigoplus_{\lambda} n_{i,\lambda} \bC_\lambda.\]
For $\lambda_1,\lambda_2 \in \Lambda$, and let $\bm{E}\bC_{\lambda_i}\rightarrow F\times \Sigma$ be the corresponding line bundles (using the same notation as in Section \ref{s:TelWooEven}). Let $\bm{E}\bC_{\lambda_1,\lambda_2} \rightarrow F \times \Sigma_{(1)}\times \Sigma_{(2)}$ (subscripts $(1)$, $(2)$ denoting two copies of $\Sigma$) be the pullback of the exterior product $\bm{E}\bC_{\lambda_1}\boxtimes \bm{E}\bC_{\lambda_2}$ to $F \times \Sigma_{(1)}\times \Sigma_{(2)}$, where $F \hookrightarrow F \times F$ is identified with the diagonal. The $1$-st Chern class of this line bundle is
\[ c_1(\bm{E}\bC_{\lambda_1,\lambda_2})=\sum_s f^{\lambda_1}_sf^s_{(1)}+f^{\lambda_2}_sf^s_{(2)}, \]
and the Chern character
\[ \Ch^u(\bm{E}\bC_{\lambda_1,\lambda_2})=u^{\lambda_1+\lambda_2}\exp\Big(\sum_s f^{\lambda_1}_sf^s_{(1)}+f^{\lambda_2}_sf^s_{(2)}\Big).\]
We then compute the slant product with $C_1 \times C_2 \subset \Sigma_{(1)}\times \Sigma_{(2)}$. The only term in the exponential that contributes is the quadratic term. At the same time we sum over weight space decompositions of $U_1,U_2$, which yields:
\begin{equation} 
\label{e:ChE1E2}
\Ch^u(\sf{E}^{C_1}U_1\cdot \sf{E}^{C_2}U_2)=-\sum_{\lambda_1,\lambda_2}n_{1,\lambda_1}n_{2,\lambda_2}u^{\lambda_1+\lambda_2}\sum_{s_1,s_2}f_{s_1}^{\lambda_1}f_{s_2}^{\lambda_2}\pair{f^{s_1}}{C_1}\pair{f^{s_2}}{C_2},
\end{equation}
where the angled brackets denote pairings between cohomology and homology classes. To slightly simplify the formulas we will assume from now on that $C_1$ is dual to $a^r$ and $C_2$ is dual to $b^r$ for some $1\le r \le \rm{g}$. There is no essential loss of generality, because \eqref{e:ChE1E2} only depends on the homology classes of $C_1,C_2$ in $\Sigma$, each of which is a sum of classes dual to the $a^i,b^i,d^j$, and moreover we will see shortly that the contribution of any other combination ($a^r$, $b^{r'}$ with $r\ne r'$; $a^r,a^{r'}$; $b^r,b^{r'}$; $a^r,d^{r'}$; $b^r,d^{r'}$) vanishes (in particular this will provide a consistency check of Proposition \ref{p:oddclasshomotopy}). With this assumption
\[ \Ch^u(\sf{E}^{C_1}U_1\cdot \sf{E}^{C_2}U_2)=-\sum_{\lambda_1,\lambda_2}n_{1,\lambda_1}n_{2,\lambda_2}u^{\lambda_1+\lambda_2} a_r^{\lambda_1}b_r^{\lambda_2}.\]
Decompose $\lambda_1,\lambda_2$ in terms of a lattice basis and note that
\[ \sum_{\lambda_1,\lambda_2}n_{1,\lambda_1}n_{2,\lambda_2}u^{\lambda_1+\lambda_2}\lambda_{1,p'}\lambda_{2,q'} \]
is the $(p',q')$ component of $-(4\pi^2)^{-1}\nabla \Tr_{U_1}\otimes \nabla \Tr_{U_2}$ at the point $u \in T$. Thus
\[ \Ch^u(\sf{E}^{C_1}U_1\cdot \sf{E}^{C_2}U_2)=\frac{1}{4\pi^2}\sum_{p',q'} (\nabla \Tr_{U_1}(u)\otimes \nabla \Tr_{U_2}(u))_{p'q'}a_r^{p'}b_r^{q'}.\]
Including the Chern character of $\sf{E}^{C_1}U_1\cdot \sf{E}^{C_2}U_2$ in \eqref{e:integralF} therefore results in
\begin{equation}
\label{e:integralFgeneral}
\int_F \exp\Big(\sum_{p,q}A_{pq}(u)\big(\sum_i a_i^p b_i^q+\sum_j c_j^p d_j^q\big)\Big)\frac{1}{4\pi^2}\sum_{p',q'} (\nabla \Tr_{U_1}(u)\otimes \nabla \Tr_{U_2}(u))_{p'q'}a_r^{p'}b_r^{q'},
\end{equation}
where $A_{pq}(u)=\ell \delta_{pq}-tH_V(u)_{pq}$. Inspection of \eqref{e:integralFgeneral} now explains why other combinations of generators of $H^1(\Sigma)$ give a vanishing contribution, for then the integrand has vanishing top degree part. The integral \eqref{e:integralFgeneral} can be calculated similar to a Gaussian integral, and results in
\[ \det(1-\ell^{-1}H_V(u))^{{\rm g}+b-1}|T_\ell|^{{\rm g}+b-1}\alpha^{\ell-tH_V(u)}_{U_1,U_2}(t) \]
where for a matrix $A_{pq}$ with inverse $A^{pq}$ and representations $U_1,U_2$, we have set
\begin{equation} 
\label{e:defalphaA}
\alpha^A_{U_1,U_2}(u)=\frac{1}{4\pi^2}\sum_{p,q}A^{pq}(u)(\nabla \Tr_{U_1}(u)\otimes \nabla \Tr_{U_2}(u))_{pq}. 
\end{equation}
The following theorem summarizes these calculations.

\begin{theorem}
\label{t:indexAB2}
Let $C_1,C_2$ be smooth closed curves in $\Sigma$. Let $U_0,U_1,U_2,V$ be representations of $G$. Then $\index_{\sul{G}}^k(\sf{E}^{\pt}U_0\cdot\sf{E}^{C_1}U_1\cdot\sf{E}^{C_2}U_2\cdot\exp(t\sf{E}^\Sigma V))$ is given by
\[ \sum_{g \in T_\ell^\reg/W} \#(C_1\cap C_2)\Tr_{U_0}(g_t)\alpha^{\ell-tH_V(g_t)}_{U_1,U_2}(g_t)\Big(\frac{(-1)^{|\mf{R}_+|}J(g_t)^2}{|T_\ell|\det(1-t\ell^{-1}H_V(g_t))}\Big)^{1-\sg}\delta_{g_t} \]
where $\ell=k+\hvee$, $g_t \exp(t\ell^{-1}v(g_t))=g$, $v=\tfrac{1}{4\pi^2}\nabla \Tr_V$, $H_V=\d v$, $\#(C_1\cap C_2)$ is the intersection pairing, and $\alpha$ is defined in \eqref{e:defalphaA}.
\end{theorem}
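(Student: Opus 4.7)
The plan is to repeat the fixed-point-formula argument of Theorem \ref{t:indexAB}, augmenting the integrand in \eqref{e:abelianlocAB3} by the equivariant Chern character forms of the three additional factors $\sf{E}^{\pt}U_0$, $\sf{E}^{C_1}U_1$, and $\sf{E}^{C_2}U_2$. Since each of these classes is pulled back from a finite-dimensional $\sul{G}$-equivariant bundle (coming from $L\sul{G}$-equivariant data on $\M_\Sigma$), their Chern characters descend from $X^T$ to well-defined closed equivariant forms on the compact quotient $F = X^T/\ul{\Pi}$, so no further $\ul{\Pi}$-invariance check is required beyond what is already used in Theorem \ref{t:indexAB}. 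The total integrand over $F$ is then the product of the base integrand \eqref{e:integralF} with the pullbacks of the three new Chern characters, and the index is obtained by applying the general fixed-point formula exactly as before.

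For the pointwise class $\sf{E}^{\pt}U_0$, the underlying vector bundle is the trivial equivariant bundle $\M_\Sigma \times U_0$, so its equivariant Chern character form is identically the class function $\Tr_{U_0}$. This contributes the scalar factor $\Tr_{U_0}(g_t)$ at each fixed point and commutes out of the integral over $F$. For the product $\sf{E}^{C_1}U_1 \cdot \sf{E}^{C_2}U_2$, I will use the formula \eqref{e:ChE1E2} for its Chern character on $F$. The special case with $[C_1]$ Poincaré dual to $a^r$ and $[C_2]$ Poincaré dual to $b^r$ is already carried out in the discussion leading to \eqref{e:integralFgeneral}, where the integral over $F$ evaluates to $\det(1-\ell^{-1}H_V(g_t))^{\sg+b-1}|T_\ell|^{\sg+b-1}\alpha^{\ell-tH_V(g_t)}_{U_1,U_2}(g_t)$, i.e.\ the Gaussian-type factor cleanly merges with the determinant in Theorem \ref{t:indexAB} while the bilinear piece produces $\alpha^{\ell-tH_V(g_t)}_{U_1,U_2}(g_t)$. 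In this special case the intersection pairing $\#(C_1 \cap C_2) = 1$, consistent with the claimed formula.

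To extend to arbitrary smooth closed $C_1, C_2$, I will invoke bilinearity: decompose $[C_i]$ in the basis of $H_1(\Sigma,\partial\Sigma)$ dual to $\{a^i, b^i, d^j\}$ and expand \eqref{e:ChE1E2}. The resulting double sum over $s_1, s_2$ yields a top-degree form on $F$ only when $(f_{s_1}, f_{s_2})$ is one of the complementary pairs $(a_i, b_i)$, $(b_i, a_i)$, $(c_j, d_j)$, $(d_j, c_j)$ appearing in the Gaussian exponent; all other combinations have vanishing top-degree part, as noted in the text. For each non-vanishing combination the Gaussian-integral factor is identical to the $(a^r, b^r)$ case, while the sign of the combinatorial coefficient is precisely the one dictated by the orientation of the corresponding pair of cycles on $\Sigma$. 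Summing over all combinations therefore reproduces the symplectic intersection number $\#(C_1 \cap C_2)$ as a single multiplicative factor.

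The main (though mild) obstacle is in the last paragraph: carefully matching signs so that the aggregate of contributions from all pairs $(a_i, b_i), (b_i, a_i), (c_j, d_j), (d_j, c_j)$ collapses to exactly $\#(C_1 \cap C_2)$, rather than some other signed combination of the expansion coefficients. This is pure orientation bookkeeping and follows from the intersection pairings $\int_\Sigma a^i b^i = 1 = \int_\Sigma c^j d^j$ (and their antisymmetric counterparts) together with Poincaré--Lefschetz duality. Once this is confirmed, assembling the three multiplicative contributions with the fixed-point formula \eqref{e:abelianlocAB3} and Theorem \ref{t:indexAB} yields the stated expression for $\index^k_{\sul{G}}(\sf{E}^{\pt}U_0 \cdot \sf{E}^{C_1}U_1 \cdot \sf{E}^{C_2}U_2 \cdot \exp(t\sf{E}^\Sigma V))$.
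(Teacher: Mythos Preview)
Your proposal is correct and follows essentially the same approach as the paper: augment the fixed-point integrand \eqref{e:abelianlocAB3} by the Chern characters of the additional classes, use that the $\dim(N)=0,1$ classes are pullbacks from $X/\ul{\Pi}$, reduce via \eqref{e:ChE1E2} and bilinearity to the special pair $(a^r,b^r)$, and evaluate the resulting Gaussian-type integral \eqref{e:integralFgeneral}. One small remark: the $(c_j,d_j)$ pairs you list never actually contribute, since closed curves $C_1,C_2$ have no $c_j$-component in $H_1(\Sigma/P)$ (this is why the paper writes their classes as combinations of $a_i,b_i,d_j$ only); the vanishing of the $(a_i,d_j)$ and $(b_i,d_j)$ pairs then handles the $d_j$-components and recovers Proposition \ref{p:oddclasshomotopy} as a consistency check.
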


\subsection{The total $\lambda$-operation}\label{s:equivVer}
In previous sections we worked with the exponential generating series $\exp(t\sf{E}^\Sigma V)\in \bC K_T(X)\ft$, and found that the appropriate twist $v=\tfrac{1}{4\pi^2}\nabla \ul{\Tr}_V$ was independent of $t$. Another interesting generating series is the total $\lambda$-operation: 
\begin{equation} 
\label{e:totlambda}
\bm{\lambda}_t \sf{E}^\Sigma V=\sum_{p\ge 0}t^p \bm{\lambda}^p\sf{E}^\Sigma V \in K_T(X)\ft.
\end{equation}
This series appears for example in the application to the equivariant Verlinde formula for the moduli space of Higgs bundles, with $V=\g_\bC$ the adjoint representation \cite{EquivVerlinde, EquivVerlindePf1, EquivVerlindePf2}. 

We first note that if $\sf{E}\in K^\ad_T(X)$ is admissible, then the classes $\bm{\lambda}^p\sf{E}$ (and likewise the Adams operations $\psi^j\sf{E}$) are also admissible. Indeed if $(\E,\nabla^\E,Q)$ is an admissible cycle representing $\sf{E}$, then the various cohomology operations applied to $\sf{E}$ are represented by cycles built by decomposing graded tensor powers $\E^{\hotimes j}$ under the action of the symmetric group (the action having appropriate signs inserted according to the Koszul sign rule; see \cite{atiyah1966power}), and it is clear that the resulting cycles are again admissible.

We briefly explain why \eqref{e:totlambda} leads to a twist $v_t$ that depends on $t$ following \cite[Section 6]{TelemanWoodward}.
\begin{proposition}
\label{p:lambdaseries}
$\sf{E}_t=\bm{\lambda}_t\sf{E}^\Sigma V \in K^\ad_T(X)\ft$ admits infinitesimally twisted Chern character forms with infinitesimal twist
\[ v_t=\frac{1}{4\pi^2}\sum_{j>0}\frac{(-t)^{j-1}}{j^2}\nabla \ul{\Tr}_{\psi^j V}.\]
\end{proposition}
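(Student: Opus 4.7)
The plan is to reduce the $\ul{\Pi}$-transformation law of the equivariant Chern character forms of $\sf{E}_t=\bm{\lambda}_t\sf{E}^\Sigma V$ to that of $\sf{E}^\Sigma V$ given in Theorem~\ref{t:ChernFormProperties}. Two ingredients drive the reduction: $\bm{\lambda}_t$ is a K-theoretic ring operation satisfying $\bm{\lambda}_t(A+B)=\bm{\lambda}_t(A)\bm{\lambda}_t(B)$, and it admits the classical expansion in Adams operations $\log\bm{\lambda}_t(W)=\sum_{j\ge 1}\frac{(-1)^{j-1}t^j}{j}\psi^j W$ in $\bC R(\sul{T})\ft$.

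I would first apply $\bm{\lambda}_t$ to the identity $\ul{\eta}^*\sf{E}^\Sigma V=\sf{E}^\Sigma V-(2\pi \i)^{-1}\partial_{\ul{\eta}}\ul{\Tr}_V$ of Proposition~\ref{p:affineWeylActionKthy}, obtaining
\[ \ul{\eta}^*\bm{\lambda}_t\sf{E}^\Sigma V=\bm{\lambda}_t\sf{E}^\Sigma V\cdot\bm{\lambda}_t\!\big((2\pi \i)^{-1}\partial_{\ul{\eta}}\ul{\Tr}_V\big)^{-1}. \]
The second factor is the image of an element of $\bC R(\sul{T})\ft$, so its equivariant Chern character form is purely a function on $X$ (no positive-degree components) and therefore contributes only to the multiplicative transformation factor of the form on $X$.

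The next step is to evaluate this scalar factor on characters using the Adams-operation expansion together with the elementary identity
\[ \psi^j\!\big((2\pi \i)^{-1}\partial_{\ul{\eta}}\ul{\Tr}_V\big)=\tfrac{1}{j}\,(2\pi \i)^{-1}\partial_{\ul{\eta}}\ul{\Tr}_{\psi^j V}, \]
which follows from the weight decomposition $(2\pi \i)^{-1}\partial_{\eta_i}\Tr_V(u_i)=\sum_\lambda\dim(V_{[\lambda]})\pair{\lambda}{\eta_i}u_i^\lambda$, the fact that $\psi^j$ sends $u_i^\lambda$ to $u_i^{j\lambda}$, and the pairing identity $\pair{j\lambda}{\eta_i}=j\pair{\lambda}{\eta_i}$.

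Assembling the two steps gives
\[ \ul{\eta}^*\Ch^{\ul{u}}(\sf{E}_t,\ul{\xi})=\Ch^{\ul{u}}(\sf{E}_t,\ul{\xi})\cdot\exp\!\Big(-(2\pi \i)^{-1}\partial_{\ul{\eta}}\,F_t(\ul{u}\exp(\ul{\xi}))\Big),\quad F_t:=\sum_{j\ge 1}\tfrac{(-1)^{j-1}t^j}{j^2}\ul{\Tr}_{\psi^j V}. \]
Matching against the convention of \eqref{e:twistedequivCh}---where the exponent is $-(2\pi \i)^{-1}\partial_{\ul{\eta}}(t\,\ul{\Tr}_V)$ and the infinitesimal twist is $v=\tfrac{1}{4\pi^2}\nabla\ul{\Tr}_V$---the infinitesimal twist for $\sf{E}_t$ is $v_t=\tfrac{1}{4\pi^2 t}\nabla F_t$; rewriting $(-1)^{j-1}t^{j-1}=(-t)^{j-1}$ yields the stated formula. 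The existence and analytic regularity of $\Ch^{\ul{u}}(\sf{E}_t,\ul{\xi})$ (polynomial growth in $\mu$ and compactly supported Fourier transform in $\ul{\xi}$, analogous to Theorem~\ref{t:ChernFormProperties}) follow order by order in $t$ from admissibility of each $\bm{\lambda}^p\sf{E}^\Sigma V$ (by the remark preceding the proposition) and from the fact that its Chern character form on $X$ is polynomial in Adams-twisted copies of $\Ch^{\ul{u}}(\sf{E}^\Sigma V)$. The main point of care is bookkeeping---combining the $1/j$ produced by $\psi^j$ with the $1/j$ in the $\log\bm{\lambda}_t$-expansion to produce the $1/j^2$ in $v_t$, and the sign reorganization $(-1)^{j-1}t^{j-1}=(-t)^{j-1}$---rather than a genuine conceptual obstacle.
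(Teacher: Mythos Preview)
Your argument is correct and uses the same two ingredients as the paper---the Adams expansion $\log\bm{\lambda}_t=-\sum_{j>0}\tfrac{(-t)^j}{j}\psi^j$ and the ``extra $1/j$'' identity---but you organize them differently. You apply $\bm{\lambda}_t$ directly to the K-theory transformation law of Proposition~\ref{p:affineWeylActionKthy} and then compute the scalar factor in $R(\sul{T})\ft$ via the elementary identity $\psi^j\big((2\pi\i)^{-1}\partial_{\ul{\eta}}\ul{\Tr}_V\big)=\tfrac{1}{j}(2\pi\i)^{-1}\partial_{\ul{\eta}}\ul{\Tr}_{\psi^j V}$. The paper instead works at the level of differential forms on $X$ throughout: it first establishes the form-level identity $\Ch^{\ul{u}}(\psi^j\sf{E}^\Sigma V,\ul{\xi})=\tfrac{1}{j}\Ch^{\ul{u}}(\sf{E}^\Sigma(\psi^j V),\ul{\xi})$ using the families index description \eqref{e:ChForm1} of the Chern form as a fiber integral over $\bSigma$, then writes $\Ch^{\ul{u}}(\bm{\lambda}_t\sf{E}^\Sigma V,\ul{\xi})=\exp\big(-\sum_{j>0}\tfrac{(-t)^j}{j^2}\Ch^{\ul{u}}(\sf{E}^\Sigma(\psi^j V),\ul{\xi})\big)$, and finally applies the already-known transformation \eqref{e:etagrad} for each $\sf{E}^\Sigma(\psi^j V)$.

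Your route is slightly slicker for the transformation law itself, since the $1/j$ computation in $R(\sul{T})$ is more elementary than the fiber-integral argument. The trade-off is that the paper's explicit formula for $\Ch^{\ul{u}}(\bm{\lambda}_t\sf{E}^\Sigma V,\ul{\xi})$ immediately gives the form-level identity and the analytic regularity needed for the definition of ``infinitesimally twisted Chern character forms'' (the conditions analogous to Theorem~\ref{t:ChernFormProperties}(i),(ii)), whereas in your last paragraph you only gesture at this; to make it precise you would end up reconstructing the same explicit exponential formula in any case.
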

\begin{proof}
One has the formula
\[ \bm{\lambda}_t \sf{E}=\exp\bigg(-\sum_{j>0}\frac{(-t)^j}{j}\psi^j \sf{E}\bigg) \]
in terms of the Adams operations $\psi^j$. By the splitting principle 
\[\Ch^{\ul{u}}(\psi^j \sf{E},\ul{\xi})_{[2p]}=j^p \Ch^{\ul{u}^j}(\sf{E},j\ul{\xi})_{[2p]} \]
where $\alpha_{[2p]}$ is the component of $\alpha$ in degree $2p$. Using the families index formula \eqref{e:ChForm1} for the Chern character form, (and the relation $\psi^j \bE V=\bE (\psi^j V)$),
\[ \Ch^{\ul{u}}(\psi^j\sf{E}^\Sigma V,\ul{\xi})_{[2p]}=j^p\Ch^{\ul{u}^j}(\sf{E}^\Sigma V,j\ul{\xi})_{[2p]}=\frac{1}{j}\int_{\bSigma}j^{p+1}\Ch^{\ul{u}^j}(\bE V,j\ul{\xi})_{[2p+2]}=\frac{1}{j}\Ch^{\ul{u}}(\sf{E}^\Sigma (\psi^j V),\ul{\xi})_{[2p]}.\]
Hence
\[ \Ch^{\ul{u}}(\bm{\lambda}_t \sf{E}^\Sigma V,\ul{\xi})=\exp\bigg(-\sum_{j>0}\frac{(-t)^j}{j^2}\Ch^{\ul{u}}(\sf{E}^\Sigma(\psi^jV),\ul{\xi})\bigg)\]
For $\ul{\eta} \in \ul{\Pi}$, recall that $\ul{\eta}^*\Ch^{\ul{u}}(\sf{E}^\Sigma V,\ul{\xi})=\Ch^{\ul{u}}(\sf{E}^\Sigma V,\ul{\xi})-(2\pi \i)^{-1}\partial_{\ul{\eta}} \ul{\Tr}_V(\ul{u}\exp(\ul{\xi}))$. It follows that
\[ \ul{\eta}^*\Ch^{\ul{u}}(\bm{\lambda}_t \sf{E}^\Sigma V,\ul{\xi})=\exp\bigg(\sum_{j>0}\frac{(-t)^j}{j^2}(2\pi \i)^{-1}\partial_{\ul{\eta}} \ul{\Tr}_{\psi^j V}(\ul{u}\exp(\ul{\xi}))\bigg)\Ch^{\ul{u}}(\bm{\lambda}_t \sf{E}^\Sigma V,\ul{\xi}).\]
The claim follows immediately from this.
\end{proof}

In the special case $V=\g_\bC$ relevant to the equivariant Verlinde formula, one has
\[ \nabla \Tr_{\psi^j \g_\bC}=\sum_{\alpha \in \mf{R}}(2\pi \i)je^{j\alpha}\alpha=\sum_{\alpha \in \mf{R}_+}(2\pi \i)j(e^{j\alpha}-e^{-j\alpha})\alpha,\]
from which we find (taking $b=1$),
\[ v_t=\frac{1}{2\pi \i t}\sum_{\alpha \in \mf{R}_+}\sum_{j>0}\frac{(-t)^{j}}{j}(e^{j\alpha}-e^{-j\alpha})\alpha=\frac{1}{2\pi \i t}\sum_{\alpha \in \mf{R}_+}\log\Big(\frac{1+te^{-\alpha}}{1+te^\alpha}\Big)\alpha.\]

\bibliographystyle{amsplain}
\bibliography{../Biblio}

\end{document}